\documentclass[11pt]{article}
\usepackage[T1]{fontenc}
\usepackage{lmodern}
\usepackage[a4paper,margin=25mm]{geometry}
\usepackage{amsmath,amssymb,amsthm,mathtools}
\usepackage{mathrsfs}
\usepackage{microtype}
\usepackage{needspace}
\usepackage[colorlinks=true,linkcolor=blue,citecolor=blue,urlcolor=blue]{hyperref}

\numberwithin{equation}{section}
\newtheorem{theorem}{Theorem}[section]
\newtheorem{lemma}[theorem]{Lemma}
\newtheorem{proposition}[theorem]{Proposition}
\newtheorem{corollary}[theorem]{Corollary}
\newcommand{\R}{\mathbb R}
\newcommand{\dd}{\,\mathrm d}
\newcommand{\ip}[2]{\langle #1,#2\rangle}
\newcommand{\norm}[1]{\lVert #1\rVert}
\newcommand{\abs}[1]{\lvert #1\rvert}
\newcommand{\Id}{\mathrm I}
\newcommand{\Span}{\operatorname{span}}
\newcommand{\Sym}{\operatorname{Sym}}
\newcommand{\dist}{\operatorname{dist}}
\newcommand{\HH}{\mathcal H}
\newcommand{\BB}{\mathbb B}
\newcommand{\cE}{\mathcal E}
\newcommand{\cN}{\mathcal N}
\newcommand{\cB}{\mathcal B}
\newcommand{\cL}{\mathcal L}
\newcommand{\cD}{\mathcal D}
\newcommand{\cO}{\mathcal O}
\newcommand{\cT}{\mathcal T}
\newcommand{\cZ}{\mathcal Z}
\newcommand{\cM}{\mathcal M}
\newcommand{\cF}{\mathcal F}
\newcommand{\cW}{\mathscr W}
\newcommand{\cY}{\mathscr Y}
\newcommand{\dS}{\dot S^1}
\newcommand{\cU}{\mathcal U}
\newcommand{\Def}{\mathcal D_G}

\title{Extremals and stability for the\\
	Folland-Stein inequality on H-type groups}

\author{Zhipeng Yang\thanks{Corresponding author: yangzhipeng326@163.com.}}
\date{}

\AtEndDocument{%
	\par
	\bigskip
	\bigskip
	\noindent
	\textbf{Zhipeng Yang:}\\[0.3em]
	\textsc{Department of Mathematics, Yunnan Normal University, Kunming, China}\\[0.3em]
	\textsc{Yunnan Key Laboratory of Modern Analytical Mathematics and Applications, Kunming 650500, China}\\[0.3em]
	\textsc{Department of Mathematics: Analysis, Logic and Discrete Mathematics, Ghent University, Belgium}\\[0.3em]
	\textit{E-mail address}: \texttt{yangzhipeng326@163.com}%
}

\begin{document}
	\maketitle
	\begin{abstract}
		We classify all real extremals of the sharp $L^2$ Folland-Stein inequality on H-type groups and prove a global Bianchi-Egnell stability estimate. The linearized kernel at the standard bubble consists exactly of its translation and dilation derivatives. We also obtain explicit bounds for the spectral gap above the symmetry modes and characterize the optimal coefficient in the local stability asymptotics. The results apply to arbitrary orthogonal Clifford modules.
	\end{abstract}
	\noindent\textbf{Keywords:} Folland-Stein inequality; H-type groups; quantitative stability.\\
	\textbf{Mathematics Subject Classification (2020):} 46E35, 43A80, 35R03.
	
	\section{Introduction and main results}
	
	Let $G$ be a connected, simply connected, nonabelian H-type group. Its metric Lie algebra has an orthogonal decomposition $\mathfrak g=V\oplus Z$, where $[V,V]=Z$ and $Z$ is the center. We write
	\[
	m=\dim V,\qquad n=\dim Z,\qquad Q=m+2n,
	\qquad 2^*=\frac{2Q}{Q-2}.
	\]
	For $t\in Z$, define $J_t:V\to V$ by
	$\ip{J_tx}{y}=\ip{t}{[x,y]}$. The H-type condition is
	$J_t^2=-|t|^2\Id$. Thus, for an orthonormal basis $(T_\alpha)_{\alpha=1}^n$ of $Z$ and $J_\alpha=J_{T_\alpha}$,
	\begin{equation}\label{eq1.1}
		J_\alpha^*=-J_\alpha,\qquad
		J_\alpha J_\beta+J_\beta J_\alpha=-2\delta_{\alpha\beta}\Id.
	\end{equation}
	Kaplan introduced H-type groups in \cite{Kaplan1980}. The nonabelian nilpotent factors in the Iwasawa decompositions of simple rank-one groups form a subclass characterized by the $J^2$ condition \cite{CowlingDooleyKoranyiRicci1991}: for $z\perp z'$ and $x\in V$, there is $z''\in Z$ such that $J_zJ_{z'}x=J_{z''}x$. The results below apply to every orthogonal Clifford module satisfying \eqref{eq1.1}, including reducible modules. For example, take $V=\mathbb H$ with its Euclidean metric and $Z=\R^2$, with $J_1x=\mathbf i x$ and $J_2x=\mathbf j x$ given by left quaternionic multiplication. This gives $m=4$ and $n=2$. At $x=1$, $J_1J_2x=\mathbf k$ is orthogonal to $\Span\{J_1x,J_2x\}$, so this group lies outside the Iwasawa subclass.
	
	We identify $G$ with $V\times Z$ through exponential coordinates and use the group law
	\[
	(x,t)(y,z)=(x+y,t+z+[x,y]/2).
	\]
	In orthonormal coordinates, the horizontal fields and the sub-Laplacian are
	\begin{equation}\label{eq1.2}
		X_i=\partial_{x_i}+\frac12\sum_{\alpha=1}^n(J_\alpha x)_i\partial_{t_\alpha},
		\qquad \nabla_G=(X_1,\ldots,X_m),\qquad
		\Delta_G=\sum_{i=1}^mX_i^2.
	\end{equation}
	Haar measure is $\dd g=\dd x\dd t$, and the dilations are
	$\delta_\lambda(x,t)=(\lambda x,\lambda^2t)$. In particular, $m\geq2$, $n\geq1$ and $Q\geq4$. All norms on $V$ and $Z$ refer to the fixed H-type metric. Positive constants denoted by $C$ or $C_G$ may change from line to line; $C_G$ depends only on the fixed group.
	Let $\dS(G)$ denote the completion of $C_c^\infty(G)$ for the norm
	$\norm u_{\dS}=\norm{\nabla_Gu}_2$, and set
	\[
	\cE(u,v)=\int_G\ip{\nabla_Gu}{\nabla_Gv}\dd g,
	\qquad \cE(u)=\cE(u,u).
	\]
	The Folland-Stein embedding realizes this completion as a subspace of $L^{2^*}(G)$ \cite{FischerRuzhansky2016,Folland1975,FollandStein1982}. We work with real-valued functions in this homogeneous energy space. Denoting the best constant by $S_G$, we write the critical inequality as
	\begin{equation}\label{eq1.3}
		\cE(u)\geq S_G\norm u_{2^*}^2,
		\qquad u\in\dS(G).
	\end{equation}
	
	In Euclidean space, Aubin and Talenti \cite{Aubin1976,Talenti1976} determined the sharp Sobolev constant on $\R^d$, $d\geq3$, and classified the extremals. With
	\[
	W_{\lambda,y}(x)=\lambda^{(d-2)/2}
	(1+\lambda^2|x-y|^2)^{-(d-2)/2},
	\qquad
	\mathfrak M_d=\{cW_{\lambda,y}:c\in\R,\ \lambda>0,\ y\in\R^d\},
	\]
	the equality set is $\mathfrak M_d$. Answering a question of Brezis and Lieb \cite{BrezisLieb1985}, Bianchi and Egnell \cite{BianchiEgnell1991} proved that
	\[
	\norm{\nabla f}_2^2-S_d\norm f_{2d/(d-2)}^2
	\geq C_d\inf_{z\in\mathfrak M_d}\norm{\nabla(f-z)}_2^2,
	\qquad f\in\dot H^1(\R^d),
	\]
	for some $C_d>0$. Their proof combines a spectral estimate near $\mathfrak M_d$ with compactness of minimizing sequences modulo the symmetries. Chen, Frank and Weth \cite{ChenFrankWeth2013} proved the corresponding fractional Sobolev remainder. Dolbeault, Esteban, Figalli, Frank and Loss \cite{DolbeaultEstebanFigalliFrankLoss2025} obtained constructive stability estimates with the optimal order of dependence on the dimension. We establish the spectral classification and compactness needed for this approach on an arbitrary H-type group.
	
	The sharp form of \eqref{eq1.3} is closely related to the critical equation for the sub-Laplacian. On the Heisenberg group, Jerison and Lee \cite{JerisonLee1988} determined all Sobolev extremals in their study of the CR Yamabe problem. Frank and Lieb \cite{FrankLieb2012} subsequently proved the sharp~Hardy-Littlewood-Sobolev inequalities and their dual Sobolev inequalities, including conformally invariant fractional orders. Sharp inequalities and extremal classifications in the quaternionic and octonionic cases were obtained by Ivanov, Minchev and Vassilev \cite{IvanovMinchevVassilev2012} and Christ, Liu and Zhang \cite{ChristLiuZhang2016Octonionic}. These cases possess additional conformal geometry associated with their Iwasawa structure.
	
	For a general H-type group, put
	\[
	\begin{gathered}
		s=\frac{|x|^2}{4},\qquad D=(1+s)^2+|t|^2,
		\qquad \rho^4=s^2+|t|^2,\qquad U=D^{-(Q-2)/4},\\
		\kappa=\frac{m(Q-2)}4,\qquad
		\mu_\star=\frac{m(Q+2)}4=(2^*-1)\kappa.
	\end{gathered}
	\]
	The explicit solutions studied by Garofalo and Vassilev \cite{GarofaloVassilev2001} satisfy
	\begin{equation}\label{eq1.4}
		-\Delta_GU=\kappa U^{2^*-1}.
	\end{equation}
	Group translations and dilations act isometrically on both $\dS(G)$ and $L^{2^*}(G)$:
	\begin{equation}\label{eq1.5}
		(\cU_{\lambda,\eta}u)(g)
		=\lambda^{(Q-2)/2}u\bigl(\delta_\lambda(\eta^{-1}g)\bigr),
		\qquad \lambda>0,\quad \eta\in G.
	\end{equation}
	We distinguish the normalized solution family from the cone with arbitrary amplitude:
	\begin{equation}\label{eq1.6}
		\begin{gathered}
			U_{\lambda,\eta}=\cU_{\lambda,\eta}U,\qquad
			\cM=\{U_{\lambda,\eta}:\lambda>0,\ \eta\in G\},\\
			\mathfrak M=\{cU_{\lambda,\eta}:c\in\R,\ \lambda>0,\ \eta\in G\}.
		\end{gathered}
	\end{equation}
	Q.~Yang \cite[Theorem~1.2]{Yang2024} proved that $U$ attains the sharp constant in \eqref{eq1.3}. In the normalization above, this gives
	\[
	S_G=\frac{\cE(U)}{\norm U_{2^*}^2}
	=\kappa\left(\int_G U^{2^*}\dd g\right)^{2/Q}.
	\]
	Yang's proof combines subcritical approximation with the second-variation arguments of Frank-Lieb and Hang-Wang \cite{FrankLieb2012,HangWang2022}. The present work starts from this sharp inequality and addresses its full equality set and stability. Wang and Yang \cite{WangYang2024} obtained sharp inequalities for conformally invariant fractional powers on H-type groups.
	
	The conjecture of Garofalo and Vassilev concerns all positive entire solutions of finite energy for the critical equation. Related symmetry and nonlinear Liouville results appear in \cite{BonfiglioliUguzzoni2004,GarofaloVassilev2001}. A recent preprint of Lin \cite{Lin2026} establishes this classification on the octonionic Heisenberg group of dimension fifteen. Our classification concerns the extremals of \eqref{eq1.3} and uses their minimizing property. Define
	\[
	\Def(u)=\cE(u)-S_G\norm u_{2^*}^2,
	\qquad d(u)=\inf_{z\in\mathfrak M}\norm{u-z}_{\dS}.
	\]
	Our first main result identifies the entire equality set and controls the distance to it.
	
	\begin{theorem}\label{Thm1.1}
		The equality set in \eqref{eq1.3} is exactly $\mathfrak M$. Moreover, there exists $c_G>0$ such that
		\begin{equation}\label{eq1.7}
			\Def(u)\geq c_Gd(u)^2
			\qquad\text{for every real }u\in\dS(G).
		\end{equation}
	\end{theorem}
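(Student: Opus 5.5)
Following Bianchi-Egnell, the argument has three parts: a nondegeneracy analysis of the linearization at $U$, a local stability estimate near $\mathfrak M$, and a global compactness argument that also classifies the extremals.

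\textbf{Step 1 (spectral part).} Consider the weighted eigenvalue problem
\[
-\Delta_G\varphi=\mu\,U^{2^*-2}\varphi\qquad\text{in }\dS(G).
\]
Since $U^{2^*-2}=D^{-1}$ decays like $\rho^{-4}$, this problem has discrete spectrum. From \eqref{eq1.4}, the first eigenvalue is $\kappa$, with eigenfunction $U$. Differentiating the family $U_{\lambda,\eta}$ at $\lambda=1$, $\eta=0$ produces $1+m+n$ functions $\partial_\lambda U_{\lambda,0}$, $X_iU$, $\partial_{t_\alpha}U$ (left translations give right-invariant derivatives; I would use those). All of them are eigenfunctions with eigenvalue $\mu_\star=(2^*-1)\kappa$. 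I would then prove that the $\mu_\star$-eigenspace is spanned exactly by these functions, and that the next eigenvalue $\mu_3$ satisfies $\mu_3>\mu_\star$.

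To do this, I would write the weighted problem in variables adapted to the symmetry. First, exploit invariance under $O(V)$-maps commuting with the Clifford action. Better, decompose functions of $(x,t)$ by expanding in spherical harmonics in $t\in Z$ and in $|x|$-radial/Clifford-harmonic components in $x$. Each sector then reduces to ODE/Jacobi-type operators in $(s,|t|)$. Alternatively, I would use a Cayley-type transform to a compact model (the sphere-like quotient $D$ suggests one), where $-D\Delta_G$ becomes a sum of commuting operators with polynomial eigenfunctions. On that side, the eigenvalues can be listed explicitly as functions of the bidegree.

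I expect this to be the main obstacle. For non-Iwasawa groups there is no conformal group to generate the kernel, so nondegeneracy must be checked by direct computation, uniformly over reducible Clifford modules. The Clifford relations \eqref{eq1.1} should make the operator depend only on $(m,n)$ after averaging, so I would aim for a bidegree-indexed formula for the eigenvalues valid for every module.

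\textbf{Step 2 (local estimate).} Take $u$ with $d(u)<\varepsilon\norm u_{\dS}$. A standard minimization shows that $d(u)$ is attained at some $cU_{\lambda,\eta}$. By the isometries \eqref{eq1.5} and homogeneity, normalize to $u=U+w$ with $w$ $\cE$-orthogonal to $U$ and to the tangent space $T_U\cM$. Expanding $\norm{U+w}_{2^*}^2$ to second order, with a remainder controlled by $\norm w_{2^*}^{\min(2^*,3)}$ and the embedding, gives
\[
\Def(u)\ge\Bigl(1-\frac{\mu_\star}{\mu_3}\Bigr)\cE(w)-o(\cE(w))\ge c\,d(u)^2.
\]
The coercivity comes from Step 1 and the variational characterization of $\mu_3$ on the orthogonal complement.

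\textbf{Step 3 (compactness and classification).} Suppose \eqref{eq1.7} fails. Then there is a sequence, normalized with $\norm{u_k}_{\dS}=1$, such that $\Def(u_k)/d(u_k)^2\to0$. Since $d\le1$, we get $\Def(u_k)\to0$, so $(u_k)$ is a minimizing sequence. By the concentration-compactness principle on stratified groups (Lions-type profile decomposition for $\dS(G)\hookrightarrow L^{2^*}$), after applying some $\cU_{\lambda_k,\eta_k}$ the sequence converges strongly to an extremal $v$.

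It remains to show that every extremal lies in $\mathfrak M$. An extremal may be assumed nonnegative: $|v|$ is also an extremal, and the equality case forces $v$ not to change sign. It then solves \eqref{eq1.4} up to scaling. To classify it, consider the set of minimizers and note that it is closed under the symmetries. Combined with Step 2, a minimizer $v$ at small positive distance from $\mathfrak M$ would contradict $\Def(v)\ge c\,d(v)^2>0$, so $\mathfrak M$ is isolated in the set of minimizers.

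To rule out minimizers far from $\mathfrak M$, I would use Yang's subcritical approximation route \cite{Yang2024}. Alternatively, I would argue with the Frank-Lieb/Hang-Wang second-variation trick: the constraint that any minimizer $v$ has nonnegative second variation against the test functions $\varphi\, v$, with $\varphi$ coordinate-type functions on the compactified model, forces the center of mass condition to pin $v=U$ up to symmetries.

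With the classification in hand, $d(u_k)\to0$. Step 2 then gives $\Def(u_k)\ge c\,d(u_k)^2$ for large $k$, which contradicts $\Def(u_k)/d(u_k)^2\to0$ and proves \eqref{eq1.7}.
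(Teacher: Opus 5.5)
The Bianchi--Egnell architecture, the compactness step, and your local expansion in Step~2 are fine. However, the classification of extremals does not close. Isolation of $\mathfrak M$ among minimizers says nothing about minimizers far from $\mathfrak M$. Yang's subcritical approximation only shows that $U$ attains $S_G$, not that every extremal is a bubble. Passing to positive solutions of \eqref{eq1.4} lands you in the Garofalo--Vassilev problem, which is open for general H-type groups.

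The Frank--Lieb alternative is the right idea, but both of its working parts are missing, and ``forces the center of mass condition to pin $v=U$'' inverts the logic. The zero moments $\int_G|v|^{2^*}\omega_i\dd g=0$ must first be \emph{imposed} by a symmetry. On a non-Iwasawa group there is no conformal group of a sphere to do this; only the $m+n+1$ translations and dilations are available. The paper obtains the moments by minimizing $\Phi_\nu(a,\eta)=\int_G\log\bigl(D_a(\eta^{-1}g)/(aD(g))\bigr)\dd\nu(g)$ with $\dd\nu=|u|^{2^*}\dd g/\norm{u}_{2^*}^{2^*}$ (Lemma~\ref{Lem2.4}). The sublevel sets are compact because $\nu$ has no atom of mass $\geq1/2$. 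The critical-point equations are exactly the vanishing moments, because the $\omega_i$ in \eqref{eq2.4} are the logarithmic parameter derivatives of the bubble family.

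The centering is what makes $f=v\omega_i$ admissible in the constrained second variation. The identities $\sum_i\omega_i^2=1$ and $\sum_i\cE(v\omega_i)=\cE(v)+m\cN(v)$ then turn the summed inequalities into $\cE(v)\leq\kappa\cN(v)$. The ground-state identity $\cE(v)-\kappa\cN(v)=\int_GU^2|\nabla_G(v/U)|^2\dd g$ forces $v=cU$. No positivity and no spectral input are needed.

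Step~1 is the other gap: it lists strategies but proves nothing. Step~2 genuinely needs its conclusion. A unit-energy $w\perp T_U\mathfrak M$ lying in $\ker L_U$ would give $\Def(U+\epsilon w)=O(\abs{\epsilon}^{2+\sigma})$ while $d(U+\epsilon w)=\abs{\epsilon}$.

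A separate spherical-harmonic expansion in $t$ does not give invariant sectors, because the mixed term $\sum_\alpha\Omega_\alpha\partial_{t_\alpha}$ shifts $t$-degree. What $\Delta_G$ preserves is harmonic degree in $x$, since each $\Omega_\alpha$ is a rotation field. You also do not need an explicit spectrum in every sector; in horizontal degree $\ell\geq2$ the paper never diagonalizes. There, the sum-of-squares identity \eqref{eq3.9} together with the scalar ground-state identity \eqref{eq3.11} gives $\cE_\ell(F)-(\mu_\star+b(\ell-2))\cN_a(F)\geq0$. Equality at $\ell=2$ forces $F=w_aC$ with $((1+s)\Id-J_t^{(1)})C=0$, and since this matrix is invertible, $C=0$.

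Only degrees zero and one are computed explicitly. This uses the map $\Psi$ from the $(s,t)$ half-space to a ball, where Jacobi polynomials diagonalize the scalar operator. In degree one it also uses the matrix conjugation by $M_a=((1+s)\Id+J_t)D^{-(a+n)/2}$, which produces the Clifford angular operator $\Gamma$ with eigenvalues $-k$ and $k+n-1$ from the monogenic decomposition. Without this reduction, or an equivalent one, the kernel classification remains unproved, and with it the local quadratic estimate.
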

	
	Theorem~\ref{Thm1.1} holds for each fixed H-type group. The constant $c_G$ may depend on the group. The distance is taken to the full cone \eqref{eq1.6}, since every real multiple of a bubble has zero deficit.
	
	On the Heisenberg group, Loiudice \cite{Loiudice2005} established Sobolev remainder estimates. Liu and Zhang \cite{LiuZhang2015} proved stability estimates for the conformally invariant fractional Sobolev and Hardy-Littlewood-Sobolev inequalities and stated analogous results for the other Iwasawa groups. Chen, Lu, Tang and Wang \cite{ChenLuTangWang2026} obtained dimension-dependent bounds of asymptotically sharp order by combining a local estimate with the CR Yamabe flow. Tang, Zhang and Zhang \cite{TangZhangZhang2024} studied attainment of the optimal Heisenberg stability quotient. We prove Theorem~\ref{Thm1.1} directly on the group, using the Clifford relations and the variational properties of extremals.
	
	The second main result supplies the necessary spectral information. Consider the weighted eigenvalue problem
	\begin{equation}\label{eq1.8}
		-\Delta_Gv=\mu D^{-1}v,
		\qquad v\in\dS(G).
	\end{equation}
	The embedding into $L^2(G,D^{-1}\dd g)$ is compact. Yang \cite[Theorem~1.3]{Yang2024} proved that the first eigenvalue is $\kappa$, with a one-dimensional eigenspace, and that the second eigenvalue is $\mu_\star$. Translation and dilation derivatives of the bubble are eigenfunctions at $\mu_\star$. Remark~1.1 of \cite{Yang2024} asks whether the second eigenspace is exactly the $(m+n+1)$-dimensional span of these derivatives, as in the Euclidean result of Bianchi and Egnell \cite[Lemma~A1]{BianchiEgnell1991}.
	
	On the Heisenberg group, nondegeneracy is used in the perturbation theory of Malchiodi and Uguzzoni \cite{MalchiodiUguzzoni2002} and in the construction of concentrating sign-changing solutions by Qiang, Tang and Zhang \cite{QiangTangZhang2026}. The following theorem identifies the linearized kernel on every H-type group.
	
	\Needspace{12\baselineskip}
	\begin{theorem}\label{Thm1.2}
		Every distributional solution of
		\begin{equation}\label{eq1.9}
			-\Delta_Gv=\mu_\star D^{-1}v,
			\qquad v\in\dS(G),
		\end{equation}
		has the form
		\begin{equation}\label{eq1.10}
			v=D^{-(Q+2)/4}
			\left[c_0(1-\rho^4)+2\ip{\tau}{t}+\ip{c}{(1+s)x-J_tx}\right],
		\end{equation}
		where $c_0\in\R$, $c\in V$ and $\tau\in Z$. Conversely, all these functions are finite-energy solutions. Hence the linearized kernel consists exactly of the translation and dilation derivatives and has dimension $m+n+1$.
	\end{theorem}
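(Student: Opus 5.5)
We verify the converse first, then get uniqueness from a symmetry reduction to a finite family of ODE problems.

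\textbf{Converse direction.} Differentiate the family $U_{\lambda,\eta}$ at $(\lambda,\eta)=(1,0)$. Since each $U_{\lambda,\eta}$ solves \eqref{eq1.4}, every derivative solves the linearized equation $-\Delta_Gv=(2^*-1)\kappa U^{2^*-2}v$. One checks that $U^{2^*-2}=D^{-1}$ and $(2^*-1)\kappa=\mu_\star$, so this is exactly \eqref{eq1.9}. The derivatives are explicit:
\begin{itemize}
\item the dilation derivative is $\frac{Q-2}{2}U+\frac12 x\cdot\nabla_xU+t\cdot\nabla_tU$, a multiple of $D^{-(Q+2)/4}(1-\rho^4)$;
\item the central translations give $D^{-(Q+2)/4}\ip{\tau}{t}$;
\item the horizontal translations $-\sum_i c_iX_iU$ give $D^{-(Q+2)/4}\ip{c}{(1+s)x-J_tx}$. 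Here one uses $X_i D=(1+s)x_i/1\cdot\frac12\cdot 2+\ip{t}{J x}$-type terms and the identity $\ip{J_tx}{e_i}=-\ip{t}{[e_i,x]}$.
\end{itemize}
Finite energy follows from the decay $D^{-(Q+2)/4}\cdot O(D^{1/2})\sim D^{-Q/4}$, whose gradient lies in $L^2$. Linear independence is immediate from the polynomial prefactors, which gives dimension $m+n+1$.

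\textbf{Uniqueness: intertwining the symmetries.} The operator $L=-\Delta_G-\mu_\star D^{-1}$ commutes with several symmetry groups:
\begin{itemize}
\item the isometries $(x,t)\mapsto(Ax,Bt)$ with $AJ_tA^{-1}=J_{Bt}$, which preserve $D$;
\item the central rotations of $t$ compensated by the Clifford structure, only partially available.
\end{itemize}
Since $G$ need not be Iwasawa, I would not rely on a large automorphism group. Instead, I would exploit the Cayley-type transform only through the operators
\[
P_\alpha=T_\alpha\text{-derivative},\qquad X_i,\qquad \text{and the dilation generator }E=x\cdot\nabla_x+2t\cdot\nabla_t.
\]
Following Bianchi--Egnell's and Jerison--Lee's strategy, I would work with the conjugated operator $\tilde L=D^{(Q+2)/4}\,L\,D^{-(Q-2)/4}$ acting on $w=D^{(Q-2)/4}v$. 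The aim is to find explicit first-order "conformal Killing" vector fields $K$, for instance $K_c=(1+s)\ip{c}{\nabla}+\dots$ (the images of translations under inversion), such that $[\tilde L,K]$ is a multiple of $\tilde L$. These fields then map the kernel to itself.

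\textbf{Uniqueness: the contraction argument.} Given a kernel element $v$, I would pass to the second eigenspace of the compact weighted problem \eqref{eq1.8}, where Yang's result guarantees that $\mu_\star$ is the second eigenvalue. I would subtract the projection onto the explicit span and show that the remainder $r$ vanishes. The tool is a weighted Pohozaev/Bochner identity. Apply the H-type identity $\sum_i X_i^2$ commuting with $E$ up to $2\Delta_G$, and the Clifford relations \eqref{eq1.1} to $\nabla_G r$, to obtain an integral identity of the form
\[
\int |\nabla_G r-\text{(explicit combination)}|^2 D^{a}\dd g=0.
\]
The cross terms are handled by $J_\alpha J_\beta+J_\beta J_\alpha=-2\delta_{\alpha\beta}$, which is exactly what survives in reducible modules. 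This forces $r$ into a finite-dimensional explicit family, which can then be checked directly.

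\textbf{Alternative and main obstacle.} If the identity approach stalls, I would decompose in the variables $(s,t)$: expand $v$ in spherical harmonics on $V$ and on $Z$ and reduce to ODE/PDE in $(|x|,|t|)$, using the weighted spectral gap to exclude higher modes. The main obstacle is the mixed modes coupling $x$ and $t$ through $J_tx$. For non-Iwasawa Clifford modules there is no transitive symmetry group that reduces these modes, so the decisive step is producing the integral identity, or an operator inequality, showing that the quadratic form strictly exceeds $\mu_\star$ on the orthogonal complement of the explicit span. I expect to establish this by a sum-of-squares decomposition of $\cE(r)-\mu_\star\int D^{-1}r^2$ built from the operators $(1+s)X_i-\sum_\alpha t_\alpha(J_\alpha\cdot)_iX$ and $T_\alpha$-type derivatives.
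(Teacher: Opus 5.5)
Your converse direction is essentially the paper's, but the uniqueness half, which is the real content of the theorem, is not proved. You list three possible routes and carry out none of them. The step you yourself call decisive, that $\cE(r)-\mu_\star\cN(r)>0$ away from $U$ and the explicit span, is just a restatement of the claim. Each route also has a concrete defect. ``Conformal Killing'' fields obtained as images of translations under an inversion are exactly what a general H-type group lacks: the Kelvin-type inversion is a symmetry of the sub-Laplacian only in the Iwasawa ($J^2$) case. Even fields that preserve the kernel would not by themselves show that the kernel is the $(m+n+1)$-dimensional span. The Pohozaev/Bochner identity $\int\abs{\nabla_Gr-\cdots}^2D^a\dd g=0$ is never written down. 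Finally, expanding simultaneously in spherical harmonics on $V$ and on $Z$ does not decouple anything. In $\Delta_G=\Delta_x+s\Delta_t+\sum_\alpha\Omega_\alpha\partial_{t_\alpha}$ the mixed term preserves harmonic degree in $x$, because the $\Omega_\alpha$ are rotation fields, but it does not preserve degree in $t$.

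The missing ingredients are the following.
\begin{itemize}
\item Decompose only by horizontal harmonic degree $\ell$, writing the $\ell$-th component as $\ip{F(s,t)}{x^{\otimes\ell}}$ with $F\in\Sym_0^\ell V$. The $x$--$t$ coupling then becomes $\sum_r\cD_r$, where $\cD_r=\sum_\alpha J_\alpha^{(r)}\partial_{t_\alpha}$ and $\cD_r^2=-\Delta_t$.
\item Integration by parts in $s$ gives the exact sum of squares $\cE_\ell(F)=a^{-1}\sum_r\int s^a\abs{F_s-\cD_rF}^2\dd s\dd t+(b/a)\cB_a(F)$ with $a=b+\ell$. The ground state $D^{-(a+n-1)/2}$ gives $\cB_a\geq a(a+n-1)\cN_a$. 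Together these yield $\cE_\ell\geq(\mu_\star+b(\ell-2))\cN_a$, which kills every $\ell\geq3$.
\item For $\ell=2$ the equality case forces $F=D^{-(a+n-1)/2}C$ with $((1+s)\Id-J_t^{(1)})C=0$. Hence $C=0$, because $((1+s)\Id-J_t)^*((1+s)\Id-J_t)=D\Id$.
\item In degrees $0$ and $1$ the value $\mu_\star$ is actually attained, so no coercivity argument can work there. The paper computes exactly instead. A Cayley-type map sends $(s,t)$ to the unit ball of $\R^{n+1}$. In degree $0$, conjugation by $U$ gives a Jacobi-type operator $\cL_b$ whose eigenspace at $2b$ is spanned by $y_0,\ldots,y_n$.
\item In degree $1$, conjugation by the matrix $((1+s)\Id+J_t)D^{-(a+n)/2}$ gives $\cL_a+2\Gamma$. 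The Clifford angular operator $\Gamma$ has eigenvalues $-k$ and $k+n-1$ on degree-$k$ harmonics, by the monogenic splitting. This gives coercivity and forces the solution to be constant.
\end{itemize}
Your closing remark about operators built from $(1+s)X_i$ and $J_t$ points in this direction, but without such identities there is no proof.

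There are also two slips in your converse to fix. Left translations are generated by the right-invariant fields $\tilde X_i=\partial_{x_i}-\frac12\sum_\alpha(J_\alpha x)_i\partial_{t_\alpha}$. With the left-invariant $X_i$, $-\sum_ic_iX_iU$ is a multiple of $D^{-(Q+2)/4}\ip{c}{(1+s)x+J_tx}$, which is not in the kernel. The dilation derivative is $\frac{Q-2}2U+x\cdot\nabla_xU+2t\cdot\nabla_tU$, whereas your expression equals $\frac{Q-2}2D^{-(Q+2)/4}(1+s)$. Finally, the decay is $\rho^{2-Q}$ rather than $D^{-Q/4}$, which still suffices.
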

	
	The sign in \eqref{eq1.10} follows from the convention \eqref{eq1.2}. The isometries \eqref{eq1.5} transport the classification to every $U_{\lambda,\eta}$. Applying the theorem to the real and imaginary parts gives the corresponding complex-valued classification. Corollary~\ref{Cor4.8} states the equivalent finite-energy Liouville theorem.
	
	For $\cZ=\ker_{\dS}(-\Delta_G-\mu_\star D^{-1})$, the tangent space to the extremal cone is
	\[
	T_U\mathfrak M=\Span\{U\}\oplus\cZ,
	\qquad \dim T_U\mathfrak M=m+n+2.
	\]
	The second variation of the Sobolev deficit is coercive on the orthogonal complement of this full tangent space. Let $\mu_{\mathrm{next}}$ denote the next distinct eigenvalue of \eqref{eq1.8} after $\mu_\star$. Proposition~\ref{Prop5.4} gives the explicit bounds
	\begin{equation}\label{eq1.11}
		\mu_\star+\frac{2\mu_\star}{2m+3n+7}
		\leq\mu_{\mathrm{next}}\leq\mu_\star+m.
	\end{equation}
	Consequently, the coefficient in the transverse coercivity estimate is
	\[
	c_{\mathrm{lin}}=1-\frac{\mu_\star}{\mu_{\mathrm{next}}},
	\qquad
	\frac{2}{2m+3n+9}\leq c_{\mathrm{lin}}\leq\frac4{Q+6}.
	\]
	Proposition~\ref{Prop5.6} identifies this coefficient by the exact asymptotic formula
	\[
	\lim_{\varepsilon\downarrow0}
	\inf_{0<d(u)\leq\varepsilon\norm u_{\dS}}
	\frac{\Def(u)}{d(u)^2}=c_{\mathrm{lin}}.
	\]
	More precisely, for $d(u)/\norm u_{\dS}$ sufficiently small,
	\[
	\Def(u)\geq
	\left[c_{\mathrm{lin}}-C_G
	\left(\frac{d(u)}{\norm u_{\dS}}\right)^\sigma\right]d(u)^2,
	\qquad \sigma=\min\left\{1,\frac4{Q-2}\right\}.
	\]
	Corollary~\ref{Cor5.7} characterizes sequences attaining this asymptotic coefficient. After modulation, the normal remainders, normalized to have unit energy norm, approach the $\mu_{\mathrm{next}}$ eigenspace. The coefficient $c_{\mathrm{lin}}$ describes the local asymptotics. The optimal global constant in \eqref{eq1.7} is a separate variational quantity. In Euclidean space, K\"onig \cite{Konig2023} proved that the optimal global Bianchi-Egnell constant is strictly smaller than the local spectral coefficient $4/(d+4)$. He also proved attainment of the optimal global constant \cite{Konig2025}.
	
	There is also a consequence for the critical equation. For
	$\cF(u)=-\Delta_Gu-\kappa|u|^{4/(Q-2)}u$, Theorem~\ref{Thm5.8} proves
	\[
	C_G^{-1}\dist_{\dS}(u,\cM)
	\leq\norm{\cF(u)}_{(\dS)^*}
	\leq C_G\dist_{\dS}(u,\cM)
	\]
	in a fixed neighborhood of the normalized family $\cM$. Thus the residual norm and the distance to a single normalized bubble are locally equivalent. For several weakly interacting Heisenberg bubbles, quantitative global compactness was studied by Chen, Fan and Liao \cite{ChenFanLiao2025}.
	
	The proof has a variational part and a spectral part. For the variational argument, we center a probability measure $\nu$ on $G$ by minimizing
	\[
	\Phi_\nu(a,\eta)
	=\int_G\log\frac{D_a(\eta^{-1}g)}{aD(g)}\dd\nu(g),
	\qquad
	D_a(z,\tau)=\left(a+\frac{|z|^2}{4}\right)^2+|\tau|^2.
	\]
	The functional is finite for every probability measure. Its sublevel sets are compact exactly when every atom of $\nu$ has mass less than $1/2$. Differentiation at a minimum gives zero moments against the logarithmic derivatives of $U$. After normalizing $|u|^{2^*}$ to have unit mass, these moment conditions make the symmetry directions admissible tests in the second variation at any extremal. Yang's summed variation identity gives
	\[
	\cE(u)\leq\kappa\int_GD^{-1}u^2\dd g
	\]
	for the centered extremal. The ground-state identity gives the reverse inequality and shows that equality holds exactly for $u=cU$. The extremal classification therefore precedes the spectral argument. The same moment conditions exclude concentration of a minimizing sequence at a single point of $G$ or at infinity. Energy splitting and the Brezis-Lieb lemma \cite{BrezisLieb1983} then give compactness modulo translations and dilations.
	
	The spectral part starts from the formula
	\[
	\Delta_G=\Delta_x+s\Delta_t+
	\sum_{\alpha=1}^n\Omega_\alpha\partial_{t_\alpha},
	\qquad \Omega_\alpha=(J_\alpha x)\cdot\nabla_x.
	\]
	Each $\Omega_\alpha$ preserves Euclidean spherical harmonic degree in $x$. This gives an orthogonal decomposition of the energy space by horizontal harmonic degree. An identity expressing the tensor energy as a sum of squares, combined with a scalar ground-state transform, excludes all horizontal components of degree at least two at the eigenvalue $\mu_\star$. A quantitative refinement in degree two gives the lower bound in \eqref{eq1.11}. In degrees zero and one, a change of variables followed by scalar or matrix conjugation reduces the problem to weighted operators on an auxiliary Euclidean ball. A Clifford harmonic decomposition diagonalizes the angular operator containing the products $J_\alpha J_\beta$. The polynomial and Clifford decompositions are classical \cite{BrackxDelangheSommen1982,DunklXu2014}. We prove the conjugation formulas, the estimates and their extensions to the completed energy spaces. The local quadratic estimate follows from the spectral analysis and parameter modulation. Compactness then gives the global estimate by the Bianchi-Egnell argument.
	
	Section~\ref{Sec2} proves centering, classification of extremals and compactness of minimizing sequences. Section~\ref{Sec3} establishes the horizontal harmonic reduction. Section~\ref{Sec4} treats the two lowest degrees and proves Theorem~\ref{Thm1.2}. Section~\ref{Sec5} gives the spectral bounds, the local asymptotics, the proof of Theorem~\ref{Thm1.1}, and the residual estimate.
	
	\section{Centering, extremals and compactness}\label{Sec2}
	
	The variational and spectral arguments both use the weighted form
	\[
	\cN(u,v)=\int_GD^{-1}uv\dd g,\qquad \cN(u)=\cN(u,u).
	\]
	
	\begin{lemma}\label{Lem2.1}
		The form $\cN$ is continuous on $\dS(G)$, and the embedding
		\[
		\dS(G)\longrightarrow L^2(G,D^{-1}\dd g)
		\]
		is compact. Moreover,
		\begin{equation}\label{eq2.1}
			\cE(u)-\kappa\cN(u)
			=\int_GU^2\abs{\nabla_G(u/U)}^2\dd g
			\quad (u\in\dS(G)).
		\end{equation}
		Equality in $\cE(u)\geq\kappa\cN(u)$ holds precisely when $u$ is a constant multiple of $U$.
	\end{lemma}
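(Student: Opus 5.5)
The approach has three parts: continuity of $\cN$ from Hölder plus Folland--Stein, compactness from a tail estimate, and the identity \eqref{eq2.1} as a ground-state transform.

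\emph{Continuity.} Note that $D^{-1}\in L^{Q/2}(G)$. Indeed, $D\geq(1+s)^2$ and $D\geq|t|^2$, so $D\gtrsim (1+|x|^2+|t|)^2$. The homogeneous quasi-norm $|x|+|t|^{1/2}$ has level balls of measure $\sim r^Q$, so
\[
\int_G D^{-Q/2}\dd g\lesssim\int (1+r)^{-2Q}r^{Q-1}\dd r<\infty .
\]
Hölder with exponents $Q/2$ and $2^*/2=Q/(Q-2)$, followed by \eqref{eq1.3}, gives
\[
\cN(u)\leq \norm{D^{-1}}_{Q/2}\norm u_{2^*}^2\leq C\,\cE(u).
\]

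\emph{Compactness.} Split $D^{-1}=D^{-1}\mathbf 1_{B_R}+D^{-1}\mathbf 1_{B_R^c}$. The tail contributes at most $\norm{D^{-1}}_{L^{Q/2}(B_R^c)}\,C\,\cE(u)$, which is uniformly small for bounded sequences. On the bounded set $B_R$ the weight is bounded. The local Rellich compactness of $\dS$ into $L^2_{loc}$ holds for Hörmander vector fields. It follows from local $S^1_{loc}\hookrightarrow L^p$ for some $p>2$, or from the compact embedding of the Folland--Stein space on bounded domains. With this, a bounded sequence has a subsequence converging in $L^2(B_R)$. A diagonal argument gives convergence in $L^2(D^{-1}\dd g)$.

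\emph{Identity \eqref{eq2.1}.} First take $u\in C_c^\infty(G)$ and write $u=Uw$. Since $U$ is smooth and positive, $w\in C_c^\infty$. Expanding gives
\[
|\nabla_G(Uw)|^2=U^2|\nabla_G w|^2+w^2|\nabla_G U|^2+2Uw\ip{\nabla_GU}{\nabla_Gw}.
\]
The last two terms equal $\ip{\nabla_G U}{\nabla_G(Uw^2)}$. Integrating by parts, which is legitimate because $Uw^2$ has compact support and $X_i^*=-X_i$ since the fields are divergence free, we get
\[
\int \ip{\nabla_G U}{\nabla_G(Uw^2)}=\int(-\Delta_GU)Uw^2=\kappa\int U^{2^*}w^2=\kappa\int U^{2^*-2}u^2 .
\]
This uses \eqref{eq1.4}. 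Since $U^{2^*-2}=D^{-(Q-2)/4\cdot 4/(Q-2)}=D^{-1}$, the result is \eqref{eq2.1} on $C_c^\infty$.

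For general $u\in\dS$, take $u_k\to u$ in $\dS$ with $u_k\in C_c^\infty$. The left side of \eqref{eq2.1} converges by continuity of $\cN$. The right side is $\cE(u_k)-\kappa\cN(u_k)$, a continuous quadratic form. Hence $\nabla_G(u_k/U)$ is Cauchy in $L^2(U^2\dd g)$. Since $u_k/U\to u/U$ in $L^2_{loc}$, the distributional gradient of $u/U$ is the limit, and the identity passes to the limit. This density step is the only delicate point; I expect it to go through by lower semicontinuity plus the Cauchy property above.

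\emph{Equality case.} Equality forces $\nabla_G(u/U)=0$ a.e. Since the $X_i$ bracket-generate, a distribution annihilated by all $X_i$ is annihilated by all commutators, hence by all derivatives. So $u/U$ is constant on the connected group $G$, that is $u=cU$. Conversely, $cU$ gives equality: it lies in $\dS$ because $\cE(U)<\infty$ by the $L^{Q/2}$-type decay of $|\nabla_G U|^2$, and $U$ is approximable by cutoffs since $Q\geq4>2$.
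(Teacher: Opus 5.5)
Your proposal is correct and follows essentially the paper's route: the ground-state transform on $C_c^\infty(G)$, extension to $\dS(G)$ via the Cauchy property of $\nabla_G(u_k/U)$ in $L^2(U^2\dd g)$ with local identification of the gradient, the bracket-generating argument for the equality case, and compactness from a H\"older tail bound plus local compactness. The differences are minor: the paper obtains continuity of $\cN$ from the identity itself ($\cN\leq\kappa^{-1}\cE$) and local compactness from the Fischer--Ruzhansky embedding into $H^{1/2}_{\mathrm{loc}}$ followed by Euclidean Rellich, and your first justification of local compactness fails, since a continuous local embedding into $L^p$ with $p>2$ does not by itself give compactness in $L^2_{\mathrm{loc}}$, so rely instead on your second one, the compact embedding for H\"ormander vector fields on bounded domains.
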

	\begin{proof}
		From \eqref{eq1.1} and \eqref{eq1.2},
		\begin{equation}\label{eq2.2}
			\abs{\nabla_GD}^2=\abs{x}^2D,
			\qquad
			\Delta_GD=m+\frac{Q+2}{4}\abs{x}^2.
		\end{equation}
		These identities give \eqref{eq1.4}. They also imply $U=O(\rho^{2-Q})$ and $\nabla_GU=O(\rho^{1-Q})$ at infinity. Thus $U$ has finite energy. A homogeneous cutoff with $\abs{\nabla_G\chi_R}\leq C/R$ on $B_{2R}\setminus B_R$ gives
		\[
		\int_{B_{2R}\setminus B_R}U^2\abs{\nabla_G\chi_R}^2\dd g
		\leq C R^{2-Q}\longrightarrow0,
		\]
		so $U\in\dS(G)$. For $u\in C_c^\infty(G)$, expansion of the square and integration by parts give
		\[
		\int_GU^2\abs{\nabla_G(u/U)}^2\dd g
		=\cE(u)+\int_G\frac{\Delta_GU}{U}u^2\dd g
		=\cE(u)-\kappa\cN(u).
		\]
		Hence $\cN(u)\leq\kappa^{-1}\cE(u)$. Both the weighted mass and the quotient gradient extend by completion. On a compact subset, $D^{-1}$ is bounded below and $U$ is smooth and bounded away from zero. Weighted $L^2$ convergence and convergence of the horizontal gradients therefore identify the limits with $u/U$ and its distributional horizontal derivatives. Thus \eqref{eq2.1} holds on $\dS(G)$. If its right-hand side vanishes, $X_i(u/U)=0$ for every $i$. Their commutators span the central directions. All distributional Euclidean derivatives of $u/U$ therefore vanish, and $u/U$ is constant.
		
		For compactness, let $B_R=\{\rho<R\}$. Homogeneous polar coordinates give
		\[
		\int_{G\setminus B_R}D^{-Q/2}\dd g\leq C R^{-Q},\qquad R\geq1.
		\]
		H\"older's inequality and the nonsharp Sobolev inequality imply
		\begin{equation}\label{eq2.3}
			\int_{G\setminus B_R}D^{-1}\abs{u}^2\dd g
			\leq
			\norm{u}_{2^*}^2
			\left(\int_{G\setminus B_R}D^{-Q/2}\dd g\right)^{2/Q}
			\leq C R^{-2}\cE(u).
		\end{equation}
		For $\chi\in C_c^\infty(G)$, the product estimate
		\[
		\norm{\nabla_G(\chi u)}_2
		\leq\norm\chi_\infty\norm{\nabla_Gu}_2
		+\norm{\nabla_G\chi}_Q\norm u_{2^*}
		\]
		shows that localization is continuous in energy. The Sobolev and H\"older inequalities also give local $L^2$ bounds. The localized sequence is therefore bounded in the inhomogeneous Sobolev space of order one associated with $-\Delta_G$. The local embedding in \cite[Theorem~4.4.24]{FischerRuzhansky2016}, applied with $p=2$ and $s=1$, bounds it in $H^{1/2}_{\mathrm{loc}}(\R^{m+n})$, since the largest dilation weight is two. The Euclidean Rellich theorem gives a subsequence converging in $L^2$ on each fixed ball. Estimate~\eqref{eq2.3} controls the weighted tails and proves compactness.
	\end{proof}
	
	Define the moment functions
	\begin{equation}\label{eq2.4}
		\omega(x,t)=\left(\frac{(1+s)x-J_tx}{D},\ \frac{2t}{D},\ \frac{1-\rho^4}{D}\right).
	\end{equation}
	These are the normalized logarithmic derivatives of the bubble family at $(1,e)$. Left translation of a function has infinitesimal generator equal to the negative of the corresponding right-invariant field.
	
	\begin{lemma}\label{Lem2.2}
		The moment functions satisfy
		\begin{equation}\label{eq2.5}
			\sum_{i=1}^{m+n+1}\omega_i^2=1,\qquad
			\sum_{i=1}^{m+n+1}|\nabla_G\omega_i|^2=mD^{-1}.
		\end{equation}
		For every $u\in\dS(G)$, each $u\omega_i$ belongs to $\dS(G)$ and
		\begin{equation}\label{eq2.6}
			\sum_{i=1}^{m+n+1}\cE(u\omega_i)=\cE(u)+m\cN(u).
		\end{equation}
	\end{lemma}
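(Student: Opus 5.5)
The plan is to verify the two pointwise identities in \eqref{eq2.5} by direct computation from \eqref{eq2.2} and the Clifford relations \eqref{eq1.1}. Then \eqref{eq2.6} follows from an integration-by-parts identity for products. The whole argument rests on the observation that $\omega$ is essentially the pullback of the standard coordinate functions on a unit sphere: the bubble family is a conformal image of the model sphere, and $\omega$ plays the role of the Cayley-transform coordinates.

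\emph{First identity in \eqref{eq2.5}.} I will expand $\sum\omega_i^2$ directly. Since $J_t$ is skew, $\ip{x}{J_tx}=0$, and $\abs{J_tx}^2=\abs t^2\abs x^2$ by the H-type condition. Hence
\[
\abs{(1+s)x-J_tx}^2=\abs x^2\bigl((1+s)^2+\abs t^2\bigr)=4sD .
\]
Adding $4\abs t^2$ and $(1-\rho^4)^2$, with $\rho^4=s^2+\abs t^2$, the numerator should reduce to
\[
4sD+4\abs t^2+(1-s^2-\abs t^2)^2=D^2 .
\]
This is a short algebraic check using $D=(1+s)^2+\abs t^2$.

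\emph{Second identity in \eqref{eq2.5}.} I will compute $\nabla_G$ of each numerator. Writing $\omega_i=P_i/D$, I use
\[
\nabla_G\omega_i=\frac{\nabla_GP_i}{D}-\frac{P_i\,\nabla_GD}{D^2}.
\]
Summing squares and using $\sum P_i^2=D^2$ gives
\[
\sum\abs{\nabla_G\omega_i}^2=D^{-2}\sum\abs{\nabla_GP_i}^2-2D^{-3}\sum P_i\ip{\nabla_GP_i}{\nabla_GD}+D^{-2}\abs{\nabla_GD}^2 .
\]
For the middle term, differentiating $\sum P_i^2=D^2$ gives $\sum P_i\nabla_GP_i=D\nabla_GD$, so that term equals $-2D^{-2}\abs{\nabla_GD}^2$. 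By \eqref{eq2.2} the last two terms combine to $-\abs x^2D^{-1}$. It therefore remains to show
\[
\sum\abs{\nabla_GP_i}^2=(m+\abs x^2)D .
\]
This is the main computational obstacle. I will compute $X_j$ of each component using $X_jt_\alpha=\tfrac12(J_\alpha x)_j$ and $X_js=x_j/2$. For the $t$-block this gives $\sum_\alpha\abs{\nabla_G(2t_\alpha)}^2=\sum_\alpha\abs{J_\alpha x}^2=n\abs x^2$. For the last component, $\nabla_G\rho^4=sx+J_tx$, with $\abs{\cdot}^2=4s(s^2+\abs t^2)$. The $V$-block is the delicate part. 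Here $X_j$ of $(1+s)x_k-(J_tx)_k$ produces $(1+s)\delta_{jk}+\tfrac{x_jx_k}{2}-(J_t)_{kj}-\tfrac12\sum_\alpha(J_\alpha x)_j(J_\alpha x)_k$. The anticommutation relations in \eqref{eq1.1} give
\[
\sum_{j,\alpha,\beta}(J_\alpha x)_j(J_\beta x)_j(J_\alpha x)_k(J_\beta x)_k=\sum_\alpha\abs x^4 ,
\]
and the cross terms reduce because $\ip{J_\alpha x}{x}=0$ and $\ip{J_\alpha x}{J_\beta x}=\delta_{\alpha\beta}\abs x^2$. I expect the total to collapse to $(m+\abs x^2)D$. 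If a direct expansion is messy, the fallback is to verify the identity at $t=0$ and then use that both sides transform consistently. Alternatively, I can use $\Delta_G(\sum P_i^2)=\Delta_G D^2$ together with $\Delta_GP_i$, computed via \eqref{eq1.4}-type formulas, since $\sum P_i\Delta_GP_i+\sum\abs{\nabla_GP_i}^2=\tfrac12\Delta_GD^2$.

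\emph{Identity \eqref{eq2.6}.} Each $\omega_i$ is smooth and bounded with $\abs{\nabla_G\omega_i}\le\sqrt m D^{-1/2}$. By Lemma~\ref{Lem2.1} (continuity of $\cN$), the product $u\omega_i$ lies in $\dS(G)$, with norm bounded by $\cE(u)+m\cN(u)$ via approximation from $C_c^\infty$. For $u\in C_c^\infty(G)$ I will expand
\[
\abs{\nabla_G(u\omega_i)}^2=\omega_i^2\abs{\nabla_Gu}^2+u^2\abs{\nabla_G\omega_i}^2+\tfrac12\ip{\nabla_G(u^2)}{\nabla_G(\omega_i^2)}
\]
and sum over $i$. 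By \eqref{eq2.5} the first term sums to $\abs{\nabla_Gu}^2$ and the second to $mD^{-1}u^2$. The third vanishes because $\sum\omega_i^2\equiv1$. This gives \eqref{eq2.6} for smooth compactly supported $u$. Both sides are continuous on $\dS(G)$, so density extends it to all of $\dS(G)$.
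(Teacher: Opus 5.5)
Your proof is correct. For the first identity in \eqref{eq2.5} and for \eqref{eq2.6} it coincides with the paper's: the cross term cancels pointwise because $\sum_i\omega_i^2\equiv1$, multiplication by $\omega_i$ is bounded, and density finishes. The genuine difference is the gradient identity.

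The paper does no coordinate computation there. Because $U\omega_i$ are multiples of the parameter derivatives of $U_{\lambda,\eta}$, differentiating \eqref{eq1.4} gives $-\Delta_G(U\omega_i)=\mu_\star D^{-1}U\omega_i$. Subtracting the equation for $U$ and dividing by $U$ gives $\Delta_G\omega_i+2\ip{\nabla_G\log U}{\nabla_G\omega_i}=-mD^{-1}\omega_i$, with $m=\mu_\star-\kappa$. Multiplying by $\omega_i$ and summing, using $\sum_i\omega_i\nabla_G\omega_i=0$, yields $\sum_i\abs{\nabla_G\omega_i}^2=mD^{-1}$. That argument is short and explains the constant as the spectral gap $\mu_\star-\kappa$. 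It does rely on identifying $U\omega_i$ with the symmetry derivatives, i.e.\ on \eqref{eq4.25} with its left-translation sign.

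Your route reduces, via $\sum_iP_i\nabla_GP_i=D\nabla_GD$ and \eqref{eq2.2}, to $\sum_i\abs{\nabla_GP_i}^2=(m+\abs x^2)D$. This is self-contained and uses only \eqref{eq1.1}, and the computation does close, although you stopped at ``I expect.'' The $V$-block Jacobian is $(1+s)\Id+\tfrac12xx^T-J_t-\tfrac12\sum_\alpha(J_\alpha x)(J_\alpha x)^T$. Its skew part $-J_t$ is Frobenius-orthogonal to the symmetric part and contributes $m\abs t^2$. The symmetric part contributes $m(1+s)^2+8s^2+4s-4ns$. Adding $4ns$ from the central block and $4s\rho^4$ from the last component gives $mD+4sD$, as required.

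Your displayed quartic identity has a free index $k$; it must also be summed over $k$, and then it equals $n\abs x^4$. Since the direct expansion works, the fallback through $t=0$ is unnecessary. That is fortunate, because you did not name a symmetry that would justify it.
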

	\begin{proof}
		Clifford anticommutation gives $|((1+s)\Id-J_t)x|^2=D|x|^2$. The identity
		\[
		4sD+4|t|^2+(1-s^2-|t|^2)^2=D^2
		\]
		then proves the first formula in \eqref{eq2.5}. Differentiation of \eqref{eq1.4} along the bubble parameters gives
		\[
		-\Delta_G(U\omega_i)=\mu_\star D^{-1}U\omega_i.
		\]
		Subtract the equation for $U$ and divide by $U$ to obtain
		\[
		\Delta_G\omega_i+2\ip{\nabla_G\log U}{\nabla_G\omega_i}
		=-mD^{-1}\omega_i.
		\]
		Multiply by $\omega_i$ and sum. Since $\sum_i\omega_i\nabla_G\omega_i=0$, the second identity in \eqref{eq2.5} follows. Expanding $\nabla_G(u\omega_i)$ gives \eqref{eq2.6} on $C_c^\infty(G)$. Lemma~\ref{Lem2.1} then shows that multiplication by each $\omega_i$ is bounded on $\dS(G)$, so the identity extends by density. These formulas agree with \cite[Lemmas~2.2 and~3.4]{Yang2024} under the convention \eqref{eq1.2}.
	\end{proof}
	
	For $a>0$ and $h=(z,\tau)\in G$, write
	\[
	D_a(h)=\left(a+\frac{|z|^2}{4}\right)^2+|\tau|^2.
	\]
	The relation with bubble parameters is
	\begin{equation}\label{eq2.7}
		U_{a^{-1/2},\eta}(g)
		=\left(\frac{a}{D_a(\eta^{-1}g)}\right)^{(Q-2)/4}.
	\end{equation}
	The reference denominator $D(g)$ gives a bounded logarithmic ratio for each fixed pair of parameters.
	
	\begin{lemma}\label{Lem2.3}
		Put
		\[
		A=1+a+\rho(\eta)^2,\qquad T=\frac{A^2}{a},\qquad
		R_{a,\eta}(g)=\frac{D_a(\eta^{-1}g)}{aD(g)}.
		\]
		There is $C\geq1$, depending only on $G$, such that
		\begin{equation}\label{eq2.8}
			C^{-1}T^{-1}\leq R_{a,\eta}(g)\leq CT
			\qquad(a>0,\ \eta,g\in G).
		\end{equation}
		Moreover, $T\geq4$ and its sublevel sets in $(0,\infty)\times G$ are compact.
	\end{lemma}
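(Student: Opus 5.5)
The plan is to compare $D_a(\eta^{-1}g)$ and $aD(g)$ with a common quantity built from homogeneous norms. Both are comparable to squared gauge-type expressions:
\[
D_a(h)\simeq \bigl(a+\rho(h)^2\bigr)^2,\qquad D(g)\simeq\bigl(1+\rho(g)^2\bigr)^2 .
\]
These hold with absolute constants. Indeed, $(a+s)^2+|\tau|^2$ lies between $\tfrac12(a+s+|\tau|)^2$ and $(a+s+|\tau|)^2$, and $s+|\tau|\simeq\rho^2$ because $\rho^4=s^2+|\tau|^2$. The problem therefore reduces to bounding the ratio
\[
\frac{(a+\rho(\eta^{-1}g)^2)^2}{a\,(1+\rho(g)^2)^2}
\]
above by $CT$ and below by $(CT)^{-1}$, where $T=A^2/a$ and $A=1+a+\rho(\eta)^2$.

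The key tool is the quasi-triangle inequality for the homogeneous norm $\rho$. Since $\rho$ is a homogeneous norm on $G$ with $\rho(h^{-1})=\rho(h)$, there is $C_G$ with $\rho(gh)\le C_G(\rho(g)+\rho(h))$. Squaring gives
\[
\rho(\eta^{-1}g)^2\le C(\rho(\eta)^2+\rho(g)^2)\qquad\text{and}\qquad \rho(g)^2\le C(\rho(\eta)^2+\rho(\eta^{-1}g)^2).
\]
For the upper bound I write
\[
a+\rho(\eta^{-1}g)^2\le C\bigl(a+\rho(\eta)^2+\rho(g)^2\bigr)\le C A\,(1+\rho(g)^2).
\]
The last step uses $A\ge1$, since $x+y\le xy+ xy$ when $x,y\ge1$. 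Squaring and dividing by $a(1+\rho(g)^2)^2$ gives $R\le CA^2/a=CT$.

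For the lower bound I use
\[
1+\rho(g)^2\le C\bigl(1+\rho(\eta)^2+\rho(\eta^{-1}g)^2\bigr).
\]
I then want $1+\rho(\eta)^2+r\le C\,\frac{A}{a}\,(a+r)$ for every $r\ge0$. At $r=0$ the right side is $CA\ge C(1+\rho(\eta)^2)$, which suffices. For the $r$ term, note $A/a\ge1$, so $\frac{A}{a}r\ge r$. Hence
\[
(1+\rho(g)^2)^2\le C\,\frac{A^2}{a^2}\,(a+\rho(\eta^{-1}g)^2)^2 ,
\]
and therefore $R=\frac{D_a(\eta^{-1}g)}{aD(g)}\ge C^{-1}\frac{a}{A^2}=C^{-1}T^{-1}$.

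The remaining claims are elementary. First, $T=A^2/a\ge(1+a)^2/a\ge4$ by AM–GM. Second, on $\{T\le M\}$ we have $(1+a)^2\le Ma$, which forces $a\in[a_-,a_+]\subset(0,\infty)$. We also have $\rho(\eta)^2\le A\le\sqrt{Ma_+}$, so $\eta$ ranges over a bounded set in $G$. Since $T$ is continuous, the sublevel set is closed in $(0,\infty)\times G$ and contained in a compact box, hence compact.

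The only step needing care is the quasi-triangle inequality for this particular $\rho$. $\rho$ is continuous, positive away from $e$, homogeneous of degree one under $\delta_\lambda$, and symmetric, because $(x,t)^{-1}=(-x,-t)$. The standard compactness argument on the unit sphere of $\rho$ then gives $\rho(gh)\le C(\rho(g)+\rho(h))$. No Clifford identities are needed, which keeps the argument uniform over all H-type groups.
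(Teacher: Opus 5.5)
Your proposal is correct and follows essentially the same route as the paper. Both arguments use the comparison $D_a(h)\simeq(a+\rho(h)^2)^2$ together with the quasi-triangle inequality to obtain $a+\rho(\eta^{-1}g)^2\le CA(1+\rho(g)^2)$ and $1+\rho(g)^2\le C\frac{A}{a}(a+\rho(\eta^{-1}g)^2)$, and then finish with AM--GM and properness of $T$. The only cosmetic difference is that you derive the comparison for $D_a$ from $\tfrac12(X+Y)^2\le X^2+Y^2\le(X+Y)^2$, whereas the paper expands the square directly.
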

	\begin{proof}
		The elementary bounds
		\[
		\frac12(a+\rho(h)^2)^2\leq D_a(h)\leq(a+\rho(h)^2)^2
		\]
		follow by expanding the square and using $|z|^2/4\leq\rho(h)^2$. The analogous bounds hold for $D(g)$ with $a=1$. A homogeneous quasi-norm satisfies
		\[
		\rho(gh)\leq C_0\bigl(\rho(g)+\rho(h)\bigr),\qquad
		\rho(g^{-1})=\rho(g).
		\]
		For $h=\eta^{-1}g$, these inequalities imply
		\[
		a+\rho(h)^2\leq C_1A(1+\rho(g)^2),
		\qquad
		1+\rho(g)^2\leq C_1\frac{A}{a}(a+\rho(h)^2).
		\]
		Substitution gives \eqref{eq2.8}. Finally,
		$T\geq(a+1)^2/a\geq4$. On each sublevel set of $T$, the parameter $a$ is bounded above and bounded away from zero, and $\rho(\eta)$ is bounded. The sublevel set is therefore contained in a compact subset of $(0,\infty)\times G$. It is closed because $T$ is continuous, and hence is compact.
	\end{proof}
	
	\begin{lemma}\label{Lem2.4}
		Let $\nu$ be a Borel probability measure on $G$. The functional
		\[
		\Phi_\nu(a,\eta)=\int_G\log R_{a,\eta}(g)\dd\nu(g)
		\]
		is finite and continuously differentiable. Its sublevel sets are compact if and only if
		\begin{equation}\label{eq2.9}
			\nu(\{\xi\})<\frac12\qquad\text{for every }\xi\in G.
		\end{equation}
		Under this condition it attains a minimum at an interior point of $(0,\infty)\times G$. In particular, the conclusion holds for every atomless probability measure.
	\end{lemma}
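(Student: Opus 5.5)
Finiteness and $C^1$ regularity come straight from Lemma~\ref{Lem2.3}. By \eqref{eq2.8}, $\abs{\log R_{a,\eta}}\leq \log C+\log T(a,\eta)$, and the right-hand side is locally bounded in $(a,\eta)$ and independent of $g$. So $\Phi_\nu$ is finite and continuous by dominated convergence. For differentiability we compute the partial derivatives of $\log D_a(\eta^{-1}g)$ in $a$ and in $\eta$. The $\eta$-derivatives along one-parameter subgroups are right-invariant fields applied to $D_a$, evaluated at $h=\eta^{-1}g$. Each derivative has the form $P(h)/D_a(h)$, where $P$ is a polynomial of weighted degree at most $3$, or at most $2$ for the $a$-derivative. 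Using $D_a(h)\geq\frac12(a+\rho(h)^2)^2$ and $\abs{P(h)}\leq C(a,\eta)(1+\rho(h)^3)$, together with the bound $1+\rho(g)\leq C(1+\rho(h))$ for $\eta$ in compacts, these ratios are bounded uniformly in $g$ for $(a,\eta)$ in compact sets. Concretely, $\abs{\nabla_x D_a}\lesssim\abs{x}(a+\abs{x}^2)$, $\abs{\partial_t D_a}\lesssim\abs{t}$, and the $x$-dependence of the translation fields adds a factor $\abs{x}\abs{\partial_tD_a}$. Differentiation under the integral then gives $C^1$.

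For compactness of the sublevel sets under \eqref{eq2.9}, we show that $\Phi_\nu\to+\infty$ whenever $(a,\eta)$ leaves every compact set, that is, as $T\to\infty$. Take such a sequence $(a_k,\eta_k)$ and pass to a subsequence. There are three regimes: (i) $a_k\to0$ with $\eta_k\to\xi$ (concentration at a point); (ii) $a_k+\rho(\eta_k)^2\to\infty$ (escape to infinity or spreading); (iii) $a_k\to0$ with $\rho(\eta_k)\to\infty$, which we combine with (ii). In each regime we will find a set $E_k$ with $R_{a_k,\eta_k}\geq cT_k$ on $E_k$ and $\nu(E_k)\to\nu(G\setminus E)$ for a fixed set $E$ of mass at most $\nu(\{\xi\})$, where $\xi$ is the limit point in case (i) and $E=\emptyset$ otherwise. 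On the complement we use the lower bound $R\geq C^{-1}T^{-1}$. The point is the decomposition
\[
\Phi_\nu\geq \nu(E_k)\log(cT_k)-(1-\nu(E_k))\log(CT_k),
\]
which tends to $+\infty$ as soon as $\liminf\nu(E_k)>1/2$. In case (i), for $g$ outside a small neighbourhood $B_\delta(\xi)$ we have $D_{a_k}(\eta_k^{-1}g)\geq c_\delta$, and $D(g)\leq C$ on compacts. Hence $R\gtrsim a_k^{-1}\sim T_k$ there, with a cutoff handling far-away $g$. Therefore $\nu(E_k)\to1-\nu(\{\xi\})>1/2$ after letting $\delta\to0$. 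In case (ii) the analogous estimate holds on a large fixed ball $B_M$ where $D(g)\leq C_M$: there $D_{a_k}(\eta_k^{-1}g)\gtrsim(a_k+\rho(\eta_k)^2)^2$, so $R\gtrsim A_k^2/a_k=T_k$, and $\nu(B_M)\to1$ as $M\to\infty$. For the converse, if $\nu(\{\xi\})\geq1/2$, we take $\eta=\xi$ and $a\to0$. Using $\log R\leq\log(4/a)+O(1)$ on the atom, since $D_a(0)=a^2$, and $\log R\leq\log(CT)\leq \log(C/a)$ off it, the two terms cancel to give $\Phi_\nu\leq C$ along a noncompact path. The sublevel set is therefore not compact.

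The existence of a minimizer follows because $\Phi_\nu$ is continuous and has a nonempty compact sublevel set, and any minimum lies in the open set $(0,\infty)\times G$. Atomless measures trivially satisfy \eqref{eq2.9}. I expect the main obstacle to be the uniform lower bound $R\gtrsim T$ on the good sets in the mixed regime, where $a_k\to0$ while $\rho(\eta_k)$ stays moderate or grows. This needs the quasi-triangle inequality used with some care. If it becomes messy, I would first use the translation and dilation covariance of $\Phi$ to reduce to $\eta_k\to$ a limit or $\rho(\eta_k)\to\infty$ separately.
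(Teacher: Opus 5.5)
Your proposal is correct and follows essentially the same route as the paper. The common ingredients are the uniform bounds from Lemma~\ref{Lem2.3} with dominated differentiation for the $C^1$ claim, the dichotomy between $A_j\to\infty$ and concentration at a limit point, a fixed good set of $\nu$-mass exceeding $1/2$ on which $R\geq cT_j$, and the converse via $\eta=\xi$, $a\downarrow0$. One slip needs fixing: on the atom $R_{a,\xi}(\xi)=a^2/(aD(\xi))=a/D(\xi)$, so the bound there is $\log R\leq\log a+O(1)$, not $\log(4/a)$, and this is what gives $\Phi_\nu(a,\xi)\leq(2\vartheta-1)\log a+C$ and the cancellation you describe.
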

	\begin{proof}
		Lemma~\ref{Lem2.3} gives $|\log R_{a,\eta}(g)|\leq\log T+C$ uniformly in $g$. The dominated convergence theorem proves finiteness and local continuity of $\Phi_\nu$. Its parameter derivatives are also locally uniformly bounded in $g$. For example,
		\[
		|\partial_a\log D_a|\leq2/a,\qquad
		|\partial_a\log R_{a,\eta}|\leq3/a,
		\]
		and a central translation derivative has absolute value at most $1/a$. Writing $z=x-y$ and $\tau=t-w-[y,x]/2$ for $\eta=(y,w)$, a horizontal translation derivative has absolute value at most
		\[
		C\bigl(a^{-1/2}+|y|a^{-1}\bigr).
		\]
		This follows from $|[y,x]|\leq C|y||x|$, $|x|\leq|z|+|y|$, and
		$|\tau|/((a+|z|^2/4)^2+|\tau|^2)\leq1/(2(a+|z|^2/4))$.
		Dominated differentiation now proves the $C^1$ assertion.
		
		Assume \eqref{eq2.9}. Let $T_j\to\infty$. After passage to a subsequence, either $A_j\to\infty$ or $A_j$ is bounded.
		
		If $A_j\to\infty$, choose a compact set $K\subset G$ with $\nu(K)>3/4$. On $K$,
		\[
		a_j+\rho(\eta_j^{-1}g)^2\geq c_KA_j
		\]
		for all sufficiently large $j$. To see this, either $a_j\geq A_j/3$, or $\rho(\eta_j)^2$ is comparable to $A_j$ and the reverse quasi-triangle bound gives
		$\rho(\eta_j^{-1}g)\geq c\rho(\eta_j)$ uniformly on $K$.
		Since $D$ is bounded on $K$, we have
		\begin{equation}\label{eq2.10}
			R_{a_j,\eta_j}(g)\geq c_KT_j\qquad(g\in K).
		\end{equation}
		
		If $A_j$ is bounded, then $a_j\to0$ and, after a further subsequence, $\eta_j\to\eta_\infty\in G$. Since $\nu(\{\eta_\infty\})<1/2$, inner regularity gives a compact set $K\subset G\setminus\{\eta_\infty\}$ with $\nu(K)>1/2$. The distance from $\eta_j$ to $K$ is bounded below for large $j$. Thus \eqref{eq2.10} holds in this case too, because $A_j$ is bounded.
		
		In either case, integrate \eqref{eq2.10} over $K$ and the lower bound in \eqref{eq2.8} over its complement. We obtain
		\[
		\Phi_\nu(a_j,\eta_j)
		\geq(2\nu(K)-1)\log T_j-C_K\longrightarrow\infty.
		\]
		The properness of $T$ now implies compactness of every sublevel set of $\Phi_\nu$. A minimizing sequence in the nonempty compact sublevel $\Phi_\nu\leq\Phi_\nu(1,e)+1$ therefore has a subsequence converging to an interior minimum.
		
		Conversely, suppose $\nu(\{\xi\})=\vartheta\geq1/2$. Fix $\eta=\xi$ and let $a\downarrow0$. At the atom,
		\[
		R_{a,\xi}(\xi)=a/D(\xi).
		\]
		For $0<a\leq1$, the upper bound in \eqref{eq2.8} gives $R_{a,\xi}(g)\leq C_\xi/a$ for every $g$. Splitting off the atom yields
		\[
		\Phi_\nu(a,\xi)
		\leq(2\vartheta-1)\log a+C_\xi.
		\]
		For $\vartheta>1/2$, the right-hand side tends to $-\infty$. For $\vartheta=1/2$, it is uniformly bounded above. In either case, a fixed sublevel set contains the escaping sequence $(a,\xi)$ with $a\downarrow0$ and is noncompact.
	\end{proof}
	
	\begin{proposition}\label{Prop2.5}
		For every nonzero real $u\in L^{2^*}(G)$, there are $\lambda>0$ and $\eta\in G$ such that
		\[
		v=\cU_{\lambda,\eta}^{-1}u
		\quad\hbox{satisfies}\quad
		\int_G|v|^{2^*}\omega_i\dd g=0\quad(1\leq i\leq m+n+1).
		\]
		If $u\in\dS(G)$, this transformation also preserves its energy.
	\end{proposition}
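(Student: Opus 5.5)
Apply Lemma~\ref{Lem2.4} to the probability measure
\[
\dd\nu=\frac{|u|^{2^*}}{\norm u_{2^*}^{2^*}}\dd g .
\]
This measure is absolutely continuous with respect to Haar measure, hence atomless. Lemma~\ref{Lem2.4} therefore gives an interior minimum $(a_0,\eta_0)$ of $\Phi_\nu$. Since $\Phi_\nu$ is $C^1$, its derivatives in $a$ and in every translation direction of $\eta$ vanish there. The plan is to convert these $m+n+1$ first-order conditions into the vanishing moments of $v=\cU_{\lambda,\eta}^{-1}u$, with $\lambda=a_0^{-1/2}$ and $\eta=\eta_0$.

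First, rewrite $\Phi_\nu$ using \eqref{eq2.7}. We have
\[
\log R_{a,\eta}(g)=-\tfrac{4}{Q-2}\log U_{a^{-1/2},\eta}(g)-\log D(g).
\]
The term $\log D$ does not depend on the parameters. Its integral against $\nu$ is finite by Lemma~\ref{Lem2.3}, since $\log R$ is bounded for fixed parameters, and for the same reason $\log U_{a^{-1/2},\eta}$ is $\nu$-integrable. So minimizing $\Phi_\nu$ amounts to maximizing $\int\log U_{\lambda,\eta}\dd\nu$ over $(\lambda,\eta)$. The critical-point equations then read
\[
\int_G \partial_\theta \log U_{\lambda,\eta}\,|u|^{2^*}\dd g=0
\]
for every parameter direction $\theta$. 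Differentiation under the integral is justified by the domination already established in Lemma~\ref{Lem2.4}.

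Next, change variables $g=\eta\,\delta_{\lambda}^{-1}(h)$. Haar measure scales by $\lambda^{-Q}$, and $|u(g)|^{2^*}\dd g=|v(h)|^{2^*}\dd h$ because $\cU_{\lambda,\eta}$ is an $L^{2^*}$ isometry. Under this substitution $U_{\lambda,\eta}$ becomes $\lambda^{(Q-2)/2}U(h)$. Each parameter derivative of $\log U_{\lambda,\eta}$ at $(\lambda,\eta)$ therefore becomes a derivative of $\log U_{\lambda',\eta'}$ at $(1,e)$ in a transformed direction. The transformed dilation direction is $\lambda\partial_\lambda$ (up to the constant from the amplitude). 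The transformed translation directions are left translations, after conjugating by $\eta$ and rescaling by $\delta_\lambda$.

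This linear correspondence is invertible on the $(m+n+1)$-dimensional tangent space of the parameter manifold. Hence the vanishing of all derivatives at $(\lambda,\eta)$ is equivalent to the vanishing of all derivatives of $\log U_{\lambda',\eta'}$ at $(1,e)$ integrated against $|v|^{2^*}$. By the remark after \eqref{eq2.4}, these derivatives are nonzero constant multiples of the moment functions $\omega_i$; left translation generators correspond to right-invariant fields. The constant coming from the amplitude factor $\lambda^{(Q-2)/2}$ in the dilation direction needs separate care: $\partial_\lambda\log U_{\lambda,e}$ at $\lambda=1$ equals $\tfrac{Q-2}{2}\,\omega_{m+n+1}$, and the amplitude is already built into $\omega$ through $(1-\rho^4)/D$. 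So the conditions become $\int|v|^{2^*}\omega_i=0$ for all $i$.

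The main obstacle is bookkeeping: one must verify that the identification of parameter derivatives with the $\omega_i$ is exactly that of \eqref{eq2.4}, including the signs and the invertibility of the map on the tangent space. To avoid computing at a general point, I would rely on the group structure. The family is invariant: $\cU_{\lambda,\eta}U_{\lambda',\eta'}$ is again a bubble, with composed parameters. Composing by a fixed group element is a diffeomorphism of the parameter space, so criticality at $(\lambda,\eta)$ of $\int\log U_{\cdot}\,|u|^{2^*}$ is equivalent to criticality at $(1,e)$ of $\int\log U_{\cdot}\,|v|^{2^*}$. This reduces the whole argument to the derivative computation at $(1,e)$, which is the content of \eqref{eq2.4}.

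Finally, energy preservation for $u\in\dS(G)$ is immediate from \eqref{eq1.5}, since $\cU_{\lambda,\eta}$ is an isometry of $\dS(G)$.
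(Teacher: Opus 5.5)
Your proposal is correct and is essentially the paper's proof: you apply Lemma~\ref{Lem2.4} to $\dd\nu=|u|^{2^*}\dd g/\norm u_{2^*}^{2^*}$, use the composition law \eqref{eq2.11} to move stationarity from $(a_0^{-1/2},\eta_0)$ to $(1,e)$, and identify the logarithmic parameter derivatives of the bubble at $(1,e)$ with $\frac{Q-2}{4}$ (translations) or $\frac{Q-2}{2}$ (dilation) times the components of $\omega$. One aside is false but not load-bearing: for $u\in L^{2^*}(G)$ the integrals $\int_G\log D\dd\nu$ and $\int_G\log U_{\lambda,\eta}\dd\nu$ may diverge (Lemma~\ref{Lem2.3} bounds only the ratio $R_{a,\eta}$, which is why $\Phi_\nu$ is normalized by $D$), so instead of speaking of maximizing $\int_G\log U_{\lambda,\eta}\dd\nu$ you should differentiate $\Phi_\nu$ directly in the coordinates $(\lambda',\eta')$ of \eqref{eq2.11}, using $\partial\log R=-\frac{4}{Q-2}\partial\log U_{\lambda',\eta'}(h)$ and the domination from Lemma~\ref{Lem2.4}.
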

	\begin{proof}
		Apply Lemma~\ref{Lem2.4} to
		$\dd\nu=|u|^{2^*}\dd g/\norm{u}_{2^*}^{2^*}$, which is atomless. Let $(a_0,\eta_0)$ be a minimum and put $\lambda_0=a_0^{-1/2}$. By \eqref{eq2.7}, the integral of every parameter derivative of $\log U_{\lambda,\eta}$ vanishes there. We transfer this stationarity condition to the reference bubble $U=U_{1,e}$.
		
		Composition of the similarities gives
		\begin{equation}\label{eq2.11}
			\cU_{\lambda_0,\eta_0}\cU_{\lambda',\eta'}
			=\cU_{\lambda_0\lambda',\,\eta_0\delta_{\lambda_0^{-1}}\eta'}.
		\end{equation}
		The right-hand parameters in \eqref{eq2.11} form a local coordinate system at $(\lambda_0,\eta_0)$. Write $h=\delta_{\lambda_0}(\eta_0^{-1}g)$. Then
		\[
		\log(\cU_{\lambda_0,\eta_0}U_{\lambda',\eta'})(g)
		=\frac{Q-2}{2}\log\lambda_0+\log U_{\lambda',\eta'}(h).
		\]
		Moreover
		\[
		v(h)=\lambda_0^{-(Q-2)/2}u(\eta_0\delta_{\lambda_0^{-1}}h),
		\qquad |v(h)|^{2^*}\dd h=|u(g)|^{2^*}\dd g.
		\]
		At $(\lambda',\eta')=(1,e)$, the horizontal and central derivatives of $\log U_{\lambda',\eta'}$ are $(Q-2)/4$ times the corresponding components of $\omega$. The dilation derivative is $(Q-2)/2$ times its last component. Stationarity and the change of variables prove all the zero moment conditions.
	\end{proof}
	
	\begin{theorem}\label{Thm2.6}
		For a real $u\in\dS(G)$, equality in
		$\cE(u)\geq S_G\norm{u}_{2^*}^2$ holds if and only if $u\in\mathfrak M$.
	\end{theorem}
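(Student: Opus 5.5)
The "if" direction is immediate: by Yang's theorem $U$ attains $S_G$, the isometries \eqref{eq1.5} preserve both $\cE$ and $\norm\cdot_{2^*}$, and both sides of \eqref{eq1.3} scale by $c^2$. So every element of $\mathfrak M$ is an equality case. For the converse, let $u\neq0$ be an extremal. By Proposition~\ref{Prop2.5}, after replacing $u$ by $\cU_{\lambda,\eta}^{-1}u$ (which keeps it an extremal), we may assume the moment conditions
\[
\int_G|u|^{2^*}\omega_i\dd g=0,\qquad 1\le i\le m+n+1.
\]
After scaling we may also assume $\norm u_{2^*}=1$. The goal is to show $\cE(u)\le\kappa\cN(u)$. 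Lemma~\ref{Lem2.1} then forces equality in $\cE\ge\kappa\cN$, so $u=cU$, and undoing the normalization puts the original function in $\mathfrak M$.

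To obtain that inequality I would use the second variation of the Sobolev quotient at the minimizer $u$. Since $u$ minimizes $\cE(w)/\norm w_{2^*}^2$, it satisfies the Euler--Lagrange equation $-\Delta_Gu=S_G|u|^{2^*-2}u$. For any admissible $\varphi\in\dS(G)$ with $\int|u|^{2^*-2}u\varphi\dd g=0$, the second variation gives
\[
\cE(\varphi)\ge(2^*-1)S_G\int_G|u|^{2^*-2}\varphi^2\dd g .
\]
Take $\varphi=u\omega_i$, which lies in $\dS(G)$ by Lemma~\ref{Lem2.2}. The orthogonality condition is then exactly the moment condition $\int|u|^{2^*}\omega_i=0$. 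Summing over $i$ and using $\sum\omega_i^2=1$ together with \eqref{eq2.6} gives
\[
\cE(u)+m\cN(u)\ge(2^*-1)S_G\int|u|^{2^*}\dd g=(2^*-1)S_G=(2^*-1)\cE(u).
\]
The last equality uses the normalization and extremality. Since $2^*-2=4/(Q-2)$ and $m\cdot(Q-2)/4=\kappa$, this rearranges to $\cE(u)\le\kappa\cN(u)$.

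The main obstacle is justifying the second variation rigorously for a general extremal that is only known to lie in $\dS(G)$. Two points need care. First, the map $\epsilon\mapsto\norm{u+\epsilon\varphi}_{2^*}^{2}$ must be $C^2$ at $\epsilon=0$. This holds for $2^*>2$, but for $Q$ large the exponent $2^*-2<1$, so the expansion of $|u+\epsilon\varphi|^{2^*}$ needs dominated convergence with the bound $|u|^{2^*-2}\varphi^2\le C(|u|^{2^*}+|\varphi|^{2^*})$; this is fine since $\varphi=u\omega_i\in L^{2^*}$ with $|\omega_i|\le1$. Second, one must derive the inequality correctly from minimality of $\cE(u+\epsilon\varphi)\norm{u+\epsilon\varphi}_{2^*}^{-2}$. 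When $\int|u|^{2^*-2}u\varphi=0$, the first-order term of the norm vanishes, and the second-order coefficient of $\norm{u+\epsilon\varphi}_{2^*}^2$ is $(2^*-1)\int|u|^{2^*-2}\varphi^2$, which yields the stated inequality. It is worth noting that this derivation needs no sign information on $u$, so sign-changing extremals are handled uniformly. The equality case of Lemma~\ref{Lem2.1} is what closes the argument.
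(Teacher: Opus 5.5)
Your proposal is correct and follows the paper's proof essentially step for step. You center via Proposition~\ref{Prop2.5}, test the constrained second variation with $u\omega_i$, sum using \eqref{eq2.5}--\eqref{eq2.6} to get $\cE(u)\le\kappa\cN(u)$, and close with the equality case of Lemma~\ref{Lem2.1}. The differences are that you normalize $\norm u_{2^*}=1$ where the paper carries $B=\int_G|v|^{2^*}\dd g$, and that your dominated-convergence justification of the second variation spells out a point the paper only asserts.
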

	\begin{proof}
		By \cite[Theorem~1.2]{Yang2024} and similarity invariance, every member of $\mathfrak M$ attains equality. Conversely, let $u\neq0$ be an extremal and apply Proposition~\ref{Prop2.5}. Denote the centered extremal by $v$ and set $B=\int_G|v|^{2^*}\dd g$.
		
		The critical embedding and $2^*>2$ make the quotient $\cE(v)/\norm{v}_{2^*}^2$ twice continuously differentiable on $\dS(G)\setminus\{0\}$. In particular,
		\[
		\int_G|v|^{2^*-2}f^2\dd g
		\leq\norm v_{2^*}^{2^*-2}\norm f_{2^*}^2,
		\]
		so the second variation is defined for every energy direction at each real $v\neq0$. At an extremal it gives
		\[
		\cE(f)\geq(2^*-1)\frac{\cE(v)}B\int_G|v|^{2^*-2}f^2\dd g
		\quad\hbox{when}\quad
		\int_G|v|^{2^*-2}vf\dd g=0.
		\]
		Take $f=v\omega_i$. Membership in the energy space follows from Lemma~\ref{Lem2.2}, and the tangent condition is precisely $\int_G|v|^{2^*}\omega_i\dd g=0$. Summing the inequalities and using $|\omega|^2=1$ gives
		\[
		\cE(v)+m\cN(v)\geq(2^*-1)\cE(v).
		\]
		Since $\kappa=m/(2^*-2)$, this says $\cE(v)\leq\kappa\cN(v)$. The opposite inequality and its equality characterization are given by Lemma~\ref{Lem2.1}. Hence $v=cU$. Undoing the similarity gives $u=cU_{\lambda,\eta}$.
	\end{proof}
	
	The classification uses the nonnegativity of the constrained second variation at a Sobolev minimizer. It applies to real extremals of either sign and is independent of the kernel classification in Theorem~\ref{Thm1.2}.
	
	The map \eqref{eq2.4} extends continuously to the one-point compactification $\widehat G=G\cup\{\infty\}$, with
	\begin{equation}\label{eq2.12}
		\omega(\infty)=(0,0,-1),\qquad |\omega(\xi)|=1\quad(\xi\in\widehat G).
	\end{equation}
	This follows from $D\sim\rho^4$ at infinity, the identity $|((1+s)\Id-J_t)x|=|x|\sqrt D$, and the bounds $|x|\leq2\rho$ and $|t|\leq\rho^2$.
	
	\begin{theorem}\label{Thm2.7}
		Suppose $v_j\in\dS(G)$ is real and
		\[
		\int_G|v_j|^{2^*}\dd g=1,\qquad \cE(v_j)\to S_G,\qquad
		\int_G|v_j|^{2^*}\omega\dd g=0.
		\]
		A subsequence converges strongly in $\dS(G)$ to $cU$, where
		$|c|=(\int_GU^{2^*}\dd g)^{-1/2^*}$. Consequently every normalized minimizing sequence is precompact modulo group translations and dilations.
	\end{theorem}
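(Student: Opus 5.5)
The plan is a concentration-compactness argument in which the moment conditions rule out concentration at a point and escape to infinity. Theorem~\ref{Thm2.6} then identifies the limit.

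\emph{Step 1 (weak limit and splitting).} Since $\cE(v_j)$ is bounded, pass to a subsequence with $v_j\rightharpoonup v$ weakly in $\dS(G)$. By Lemma~\ref{Lem2.1} we also get $v_j\to v$ in $L^2(G,D^{-1}\dd g)$, and, by local compactness, $v_j\to v$ almost everywhere. Put $r_j=v_j-v$. Weak convergence gives
\[
\cE(v_j)=\cE(v)+\cE(r_j)+o(1),
\]
and the Brezis-Lieb lemma gives
\[
1=\norm v_{2^*}^{2^*}+\norm{r_j}_{2^*}^{2^*}+o(1).
\]
Write $\theta=\norm v_{2^*}^{2^*}\in[0,1]$. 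Applying \eqref{eq1.3} to $v$ and to $r_j$, together with strict concavity of $\theta\mapsto\theta^{2/2^*}$, we obtain
\[
S_G\geq S_G\bigl(\theta^{2/2^*}+(1-\theta)^{2/2^*}\bigr).
\]
This forces $\theta\in\{0,1\}$. If $\theta=1$, then $\cE(v)\leq\liminf\cE(v_j)=S_G$, so $v$ is an extremal and $\cE(r_j)\to0$, which is strong convergence. The conditions $\int|v|^{2^*}\omega=0$ and $\norm v_{2^*}=1$ pass to the limit because $v_j\to v$ in $L^{2^*}$ (energy convergence plus Sobolev) and $\omega$ is bounded. Theorem~\ref{Thm2.6} then gives $v=cU_{\lambda,\eta}$.

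\emph{Step 2 (pinning the parameters).} To see that $v=cU$, use the centering. The measure $\nu=|v|^{2^*}\dd g$ has zero moments against $\omega$. For $\nu=c^{2^*}U_{\lambda,\eta}^{2^*}$, the function $\Phi_\nu$ is stationary at $(a,\eta')=(1,e)$. I would then show that this stationary point forces $\lambda=1$ and $\eta=e$. I would try to do this either by strict convexity and uniqueness of the critical point of $\Phi_\nu$ for bubble measures, or by a direct computation of $\int U_{\lambda,\eta}^{2^*}\omega$: by symmetry it is a vector pointing along the displacement $(\lambda,\eta)$ and nonzero away from $(1,e)$. The normalization then fixes $|c|$.

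\emph{Step 3 (excluding $\theta=0$), the main obstacle.} If $v=0$, then $\cN(v_j)\to0$ by compactness. Let $\nu_j=|v_j|^{2^*}\dd g$. Apply concentration compactness on $\widehat G$ using Lions' second lemma for the sub-Laplacian. Passing to a subsequence, $\nu_j\to\nu$ weakly-$*$ on $\widehat G$ with $\nu$ a probability measure, and $|\nabla_G v_j|^2\to\mu$. Since $v=0$, $\nu$ is purely atomic, with atoms $\nu_k$ at points $\xi_k\in\widehat G$ (infinity included). Each atom satisfies $\mu_k\geq S_G\nu_k^{2/2^*}$. Then
\[
S_G\geq\sum_k\mu_k\geq S_G\sum_k\nu_k^{2/2^*}\geq S_G\Bigl(\sum_k\nu_k\Bigr)^{2/2^*}=S_G,
\]
and strict concavity forces a single Dirac mass, $\nu=\delta_\xi$. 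Because $\omega$ is continuous on $\widehat G$ by \eqref{eq2.12}, the moment condition gives $0=\int\omega\dd\nu_j\to\omega(\xi)$. This contradicts $|\omega(\xi)|=1$. The delicate points are tightness at infinity, which is handled by working on the compactification with $\omega$ continuous there, and the atom inequality, which requires a cutoff argument with homogeneous cutoffs and the estimate $\norm{\nabla_G\chi}_Q\norm{v_j}_{L^{2^*}(\mathrm{supp}\nabla\chi)}$.

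Finally, for an arbitrary normalized minimizing sequence, apply Proposition~\ref{Prop2.5} to center each term. This preserves energy and $L^{2^*}$ norm, so the centered sequence satisfies the hypotheses above, and precompactness modulo translations and dilations follows.
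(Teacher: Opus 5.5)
Your Steps 1 and 3 and the final centering step coincide with the paper's proof. Weak limit and Brezis--Lieb splitting give the dichotomy $\theta\in\{0,1\}$. Vanishing is then excluded by concentration on $\widehat G$: atoms in $G$ come from the reverse H\"older inequality and the atom at infinity from cutoffs, the measure reduces to a single Dirac mass, and continuity of $\omega$ with $|\omega|=1$ on $\widehat G$ gives the contradiction.

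The gap is Step 2. Quoting only the \emph{statement} of Theorem~\ref{Thm2.6} gives you $v=cU_{\lambda,\eta}$. You then need the claim that a bubble measure $U_{\lambda,\eta}^{2^*}\dd g$ has vanishing $\omega$-moments only for $(\lambda,\eta)=(1,e)$, and you assert this rather than prove it. Strict convexity of $\Phi_\nu$ is not available: Lemma~\ref{Lem2.4} gives only $C^1$ regularity, compact sublevel sets and existence of a minimizer, not uniqueness of critical points. The assertion that $\int_GU_{\lambda,\eta}^{2^*}\omega\dd g$ ``points along the displacement by symmetry'' has no justification on a general, possibly non-Iwasawa, H-type group.

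The missing observation is that the \emph{proof} of Theorem~\ref{Thm2.6} uses nothing beyond the fact that $v$ is an extremal with $\int_G|v|^{2^*}\omega_i\dd g=0$ for all $i$. Then the functions $f=v\omega_i$ are admissible in the constrained second variation. Summing, with $|\omega|^2=1$ and \eqref{eq2.6}, gives $\cE(v)+m\cN(v)\geq(2^*-1)\cE(v)$, that is, $\cE(v)\leq\kappa\cN(v)$. Lemma~\ref{Lem2.1} then forces $v=cU$ directly. Your limit $v$ is exactly such a centered extremal, so this argument replaces Step 2 entirely. It also yields the uniqueness you wanted as a by-product, since a bubble with zero moments must equal $U$. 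The normalization $\int_G|v|^{2^*}\dd g=1$ then fixes $|c|$.
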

	\begin{proof}
		Pass to a subsequence such that $v_j\rightharpoonup v$ in $\dS(G)$, $v_j\to v$ in $L^2_{\mathrm{loc}}(G)$, and $v_j\to v$ almost everywhere. Put $A=\int_G|v|^{2^*}\dd g$. Hilbert-space splitting and the Brezis-Lieb lemma \cite{BrezisLieb1983} give
		\[
		\cE(v_j)=\cE(v)+\cE(v_j-v)+o(1),\qquad
		\int_G|v_j-v|^{2^*}\dd g\longrightarrow1-A.
		\]
		Applying the sharp inequality to the two terms yields
		\[
		1\geq A^{2/2^*}+(1-A)^{2/2^*}.
		\]
		Since $2/2^*<1$, strict concavity gives $A\in\{0,1\}$. If $A=1$, the sharp inequality gives $\cE(v)\geq S_G$, while weak lower semicontinuity gives $\cE(v)\leq S_G$. The energy splitting then implies $\cE(v_j-v)\to0$. It remains to exclude $v=0$.
		
		Assume $v=0$. On the compact space $\widehat G$, pass to weak limits of finite measures
		\[
		|v_j|^{2^*}\dd g\rightharpoonup\nu,\qquad
		|\nabla_Gv_j|^2\dd g\rightharpoonup\mu.
		\]
		Their total masses are $\nu(\widehat G)=1$ and $\mu(\widehat G)=S_G$. For $\varphi\in C_c^\infty(G)$, local $L^2$ convergence gives
		\[
		\cE(\varphi v_j)=\int_G\varphi^2|\nabla_Gv_j|^2\dd g+o(1).
		\]
		Thus
		\begin{equation}\label{eq2.13}
			S_G\left(\int_G|\varphi|^{2^*}\dd\nu\right)^{2/2^*}
			\leq\int_G\varphi^2\dd\mu.
		\end{equation}
		On a fixed compact set, regularity of the finite measure $\nu+\mu$ gives smooth compactly supported approximations of Borel indicators in both $L^{2^*}(\nu)$ and $L^2(\mu)$. Thus \eqref{eq2.13} holds for these indicators. Suppose the nonatomic part of $\nu$ has positive mass on a compact set $K$. Remove the at most countably many atoms and partition the remaining set into Borel sets of $\nu$-mass at most $\varepsilon$. Summing the indicator inequalities gives
		\[
		\mu(\widehat G)\geq
		S_G\varepsilon^{2/2^*-1}\nu_{\mathrm{na}}(K).
		\]
		Since $2/2^*-1<0$, letting $\varepsilon\to0$ contradicts the finiteness of $\mu$. Hence
		\[
		\nu|_G=\sum_i\alpha_i\delta_{\xi_i},\qquad
		\mu(\{\xi_i\})\geq S_G\alpha_i^{2/2^*}.
		\]
		To prove the corresponding bound at infinity, choose a smooth cutoff $\chi_R$ equal to zero on $\{\rho<R\}$ and to one on $\{\rho>2R\}$. For fixed $R$, the errors in $\cE(\chi_Rv_j)$ tend to zero by local $L^2$ convergence on the annulus and the uniform energy bound. The mixed term, for example, satisfies
		\[
		2\norm{\nabla_Gv_j}_2
		\left(\int_{B_{2R}\setminus B_R}|v_j|^2|\nabla_G\chi_R|^2\dd g\right)^{1/2}
		\longrightarrow0.
		\]
		The function $\chi_Rv_j$ belongs to $\dS(G)$, by multiplying a compactly supported approximation of $v_j$ by $\chi_R$. Passing first $j\to\infty$ and then $R\to\infty$ gives
		\[
		\mu(\{\infty\})\geq S_G\alpha_\infty^{2/2^*},
		\qquad \alpha_\infty=\nu(\{\infty\}).
		\]
		It follows that
		\[
		1\geq\sum_i\alpha_i^{2/2^*}+\alpha_\infty^{2/2^*},
		\qquad \sum_i\alpha_i+\alpha_\infty=1.
		\]
		The strict inequality $r<r^{2/2^*}$ for $0<r<1$ forces exactly one mass to equal one. Thus $\nu=\delta_\xi$ for some $\xi\in\widehat G$. The zero moment condition and \eqref{eq2.12} then give
		\[
		0=\lim_j\int_G\omega|v_j|^{2^*}\dd g=\omega(\xi),
		\]
		contradicting $|\omega(\xi)|=1$.
		
		Hence $A=1$, and $v_j\to v$ strongly in $\dS(G)$ and $L^{2^*}(G)$. Boundedness of $\omega$ allows passage to the limit in the zero moments. The proof of Theorem~\ref{Thm2.6} therefore gives $v=cU$. The last assertion follows by applying Proposition~\ref{Prop2.5} to each member of a normalized minimizing sequence.
	\end{proof}
	
	\section{Horizontal harmonic reduction}\label{Sec3}
	Throughout the remaining sections, set
	\[
	b=\frac m2,\qquad \HH=(0,\infty)\times\R^n.
	\]
	\begin{lemma}\label{Lem3.1}
		The space
		\[
		\mathscr C=C_c^\infty(G\setminus\{x=0\})
		\]
		is dense in $\dS(G)$.
	\end{lemma}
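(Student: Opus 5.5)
We show that every element of $C_c^\infty(G)$ is an $\dS$-limit of elements of $\mathscr C$. Since $C_c^\infty(G)$ is dense by definition, this suffices. The set removed, $\{x=0\}=\{0\}\times Z$, has codimension $m\geq2$ in the Euclidean coordinates, and the horizontal gradient only sees the $x$-directions transversally to it. This suggests the usual capacity-zero argument for codimension at least two.

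Fix $\varphi\in C_c^\infty(G)$. Let $\psi_\varepsilon(x)$ be a cutoff depending only on $x$, equal to $0$ for $|x|<\varepsilon$ and to $1$ for $|x|>2\varepsilon$. Because $\psi_\varepsilon$ is independent of $t$, \eqref{eq1.2} gives $X_i\psi_\varepsilon=\partial_{x_i}\psi_\varepsilon$.

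\textbf{Case $m\geq3$.} Take $\psi_\varepsilon(x)=\psi(x/\varepsilon)$, so that $|\nabla_G\psi_\varepsilon|\leq C/\varepsilon$ on $\{\varepsilon<|x|<2\varepsilon\}$. Then $\psi_\varepsilon\varphi\in\mathscr C$, and
\[
\norm{\nabla_G(\varphi-\psi_\varepsilon\varphi)}_2
\leq\norm{(1-\psi_\varepsilon)\nabla_G\varphi}_2+\norm{\varphi\nabla_G\psi_\varepsilon}_2 .
\]
The first term tends to zero by dominated convergence. For the second, let $K$ be the projection of $\operatorname{supp}\varphi$ to $Z$. Then
\[
\norm{\varphi\nabla_G\psi_\varepsilon}_2^2\leq C\norm\varphi_\infty^2\,|K|\,\varepsilon^{-2}\varepsilon^{m}=O(\varepsilon^{m-2})\to0 .
\]

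\textbf{Case $m=2$.} This is the main obstacle, since the linear cutoff only gives a bounded error. Here I would use the logarithmic cutoff
\[
\psi_\varepsilon(x)=\min\Bigl\{1,\max\Bigl\{0,\ \frac{\log(|x|/\varepsilon^2)}{\log(1/\varepsilon)}\Bigr\}\Bigr\},
\]
which vanishes for $|x|\leq\varepsilon^2$ and equals $1$ for $|x|\geq\varepsilon$. It is Lipschitz with $|\nabla_x\psi_\varepsilon|=1/(|x|\log(1/\varepsilon))$ on the annulus $\varepsilon^2<|x|<\varepsilon$. Hence
\[
\int_{\varepsilon^2<|x|<\varepsilon}|\nabla\psi_\varepsilon|^2\dd x=\frac{2\pi}{\log(1/\varepsilon)}\to0 ,
\]
and multiplying by $|K|\norm\varphi_\infty^2$ shows the error tends to zero.

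Two technical points remain. First, $\psi_\varepsilon\varphi$ is only Lipschitz. It has compact support inside $G\setminus\{x=0\}$, so a standard Euclidean mollification with radius smaller than the distance of its support to $\{x=0\}$ produces an element of $\mathscr C$. Since the coefficients of the $X_i$ are smooth on compact sets, the $\dS$-error of this mollification can be made arbitrarily small. Second, the term $(1-\psi_\varepsilon)\nabla_G\varphi$ tends to zero in $L^2$ because $\{x=0\}$ is Lebesgue-null. Combining these with the density of $C_c^\infty(G)$ proves the lemma.
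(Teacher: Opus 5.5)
Your proposal is correct and follows essentially the same argument as the paper. Both use a $t$-independent cutoff in $|x|$ with the $O(\varepsilon^{m-2})$ bound for $m\geq3$, a logarithmic cutoff between $|x|=\varepsilon^2$ and $|x|=\varepsilon$ for $m=2$, dominated convergence for the $(1-\psi_\varepsilon)\nabla_G\varphi$ term, and density of $C_c^\infty(G)$. The only cosmetic difference is that you use a Lipschitz cutoff and then mollify, whereas the paper takes smooth profiles with the same derivative bounds directly.
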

	\begin{proof}
		Fix $u\in C_c^\infty(G)$ and let $\chi_\varepsilon$ depend only on $r=\abs{x}$. Then $\abs{\nabla_G\chi_\varepsilon}=\abs{\nabla_x\chi_\varepsilon}$. If $m>2$, take $\chi_\varepsilon=0$ for $r\leq\varepsilon$ and $\chi_\varepsilon=1$ for $r\geq2\varepsilon$, with derivative bounded by $C/\varepsilon$. On the fixed support of $u$,
		\[
		\int_G\abs{u}^2\abs{\nabla_G\chi_\varepsilon}^2\dd g
		\leq C\varepsilon^{m-2}.
		\]
		If $m=2$, let $\chi_\varepsilon$ increase logarithmically from zero to one between $r=\varepsilon^2$ and $r=\varepsilon$. Its derivative is bounded by $C/(r\abs{\log\varepsilon})$, and the same integral is at most $C/\abs{\log\varepsilon}$. Smooth profiles with these bounds can be used. The term $(1-\chi_\varepsilon)\nabla_Gu$ converges to zero by dominated convergence. Thus $\chi_\varepsilon u\to u$ in energy. Density of $C_c^\infty(G)$ in $\dS(G)$ completes the proof.
	\end{proof}
	
	For $x\ne0$, let $\Pi_\ell$ denote the ordinary spherical harmonic projection of degree $\ell$ in $x/\abs{x}\in S^{m-1}$. Expansion of \eqref{eq1.2} gives
	\begin{equation}\label{eq3.1}
		\Delta_G=\Delta_x+s\Delta_t+
		\sum_{\alpha=1}^n\Omega_\alpha\partial_{t_\alpha},
		\qquad \Omega_\alpha=(J_\alpha x)\cdot\nabla_x.
	\end{equation}
	Each $\Omega_\alpha$ is a Euclidean rotation field and preserves harmonic degree. It follows that $\Delta_G$ and multiplication by $D^{-1}$ preserve every degree. In particular, distinct degrees are orthogonal for $\cE$ and $\cN$ on $\mathscr C$. The angular expansion of a function in $\mathscr C$ converges with the derivatives in \eqref{eq3.1} on its compact radial and central support. Its partial sums consequently converge in energy. Lemma~\ref{Lem3.1} gives orthogonal projections on $\dS(G)$ and
	\begin{equation}\label{eq3.2}
		u=\sum_{\ell=0}^\infty u_\ell,\qquad
		\cE(u)=\sum_{\ell=0}^\infty\cE(u_\ell),\qquad
		\cN(u)=\sum_{\ell=0}^\infty\cN(u_\ell),
		\qquad u_\ell=\Pi_\ell u.
	\end{equation}
	A weak solution of \eqref{eq1.8} therefore satisfies the same equation in each degree. This follows by testing against $\Pi_\ell\varphi$ for $\varphi\in\mathscr C$ and then using density. In particular,
	\begin{equation}\label{eq3.3}
		\cE(u_\ell)=\mu\cN(u_\ell).
	\end{equation}
	Thus preservation of horizontal harmonic degree gives the orthogonal decomposition needed for the weighted eigenvalue problem.
	
	To describe a fixed degree, identify homogeneous harmonic polynomials with symmetric trace-free tensors and write
	\begin{equation}\label{eq3.4}
		u_\ell(x,t)=\ip{F(s,t)}{x^{\otimes\ell}},
		\qquad F(s,t)\in\Sym_0^\ell(V),\qquad a=b+\ell.
	\end{equation}
	For $\ell=0$ the tensor space is $\R$. We use the Euclidean tensor inner product. Its angular integral is
	\begin{equation}\label{eq3.5}
		\int_{S^{m-1}}\ip{F}{\theta^{\otimes\ell}}
		\ip{H}{\theta^{\otimes\ell}}\dd\theta
		=c_\ell\ip{F}{H},\qquad
		c_\ell=\frac{\abs{S^{m-1}}\ell!}{m(m+2)\cdots(m+2\ell-2)}.
	\end{equation}
	For $\ell=0$, the denominator is the empty product, equal to one. Formula~\eqref{eq3.5} follows by integrating products of $2\ell$ coordinates: contractions within either tensor vanish by the trace-free condition, and the $\ell!$ pairings between the two tensors remain.
	
	For $1\leq r\leq\ell$, let $J_\alpha^{(r)}$ act on the $r$th factor of $V^{\otimes\ell}$ and set
	\[
	\cD_r=\sum_{\alpha=1}^nJ_\alpha^{(r)}\partial_{t_\alpha}.
	\]
	We regard each $\cD_r$ as an operator on $V^{\otimes\ell}$. Their sum preserves $\Sym_0^\ell(V)$. On $C_c^\infty(\R^n;V^{\otimes\ell})$, formal adjoints with respect to $\dd t$ and the Clifford relations give
	\begin{equation}\label{eq3.6}
		\cD_r^*=\cD_r,\qquad \cD_r^2=-\Delta_t\Id,
		\qquad
		\int_{\R^n}\abs{\cD_rF}^2\dd t
		=\int_{\R^n}\abs{\nabla_tF}^2\dd t.
	\end{equation}
	Here and below the norms involving an individual $\cD_r$ are full tensor norms.
	
	On $\HH$ define
	\[
	\begin{aligned}
		O_a&=-s(\partial_s^2+\Delta_t)-a\partial_s,\\
		\cB_a(F)&=\int_{\HH}s^a\bigl(\abs{F_s}^2+\abs{\nabla_tF}^2\bigr)\dd s\dd t,\\
		\cN_a(F)&=\int_{\HH}s^{a-1}D^{-1}\abs{F}^2\dd s\dd t.
	\end{aligned}
	\]
	The symbol $D$ in the half-space denotes $(1+s)^2+\abs{t}^2$. For a fixed homogeneous harmonic polynomial $H_\ell$ and a scalar coefficient $f(s,t)$,
	\[
	\Delta_x\bigl(H_\ell(x)f(s,t)\bigr)
	=H_\ell(x)\bigl(sf_{ss}+(b+\ell)f_s\bigr).
	\]
	Applying this identity to the tensor components of \eqref{eq3.4}, and applying the rotation fields in \eqref{eq3.1}, yields
	\begin{equation}\label{eq3.7}
		-\Delta_Gu_\ell
		=\ip{O_aF+\sum_{r=1}^\ell\cD_rF}{x^{\otimes\ell}}.
	\end{equation}
	The sign in the mixed term follows from $J_\alpha^*=-J_\alpha$ and the leading minus sign in $-\Delta_G$. Polar integration and \eqref{eq3.5} give
	\begin{equation}\label{eq3.8}
		\begin{gathered}
			\cE(u_\ell)=C_\ell\cE_\ell(F),\qquad
			\cN(u_\ell)=C_\ell\cN_a(F),\qquad
			C_\ell=2^{m+2\ell-1}c_\ell,\\
			\cE_\ell(F)=\cB_a(F)+
			\sum_{r=1}^\ell\int_{\HH}s^{a-1}\ip{F}{\cD_rF}\dd s\dd t.
		\end{gathered}
	\end{equation}
	The factor $C_\ell$ cancels in the weighted eigenvalue problem for each degree.
	
	\begin{lemma}\label{Lem3.2}
		For a compactly supported smooth field $F:\HH\to\Sym_0^\ell(V)$,
		\begin{equation}\label{eq3.9}
			\cE_\ell(F)=\frac1a\sum_{r=1}^\ell
			\int_{\HH}s^a\abs{F_s-\cD_rF}^2\dd s\dd t
			+\frac ba\cB_a(F).
		\end{equation}
		Furthermore,
		\begin{equation}\label{eq3.10}
			\frac ba\cB_a(F)\leq\cE_\ell(F)
			\leq\left(1+\frac\ell a\right)\cB_a(F).
		\end{equation}
		These statements extend to the energy completion of the degree-$\ell$ subspace.
	\end{lemma}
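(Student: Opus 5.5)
The identity \eqref{eq3.9} will follow by expanding the squares and integrating the cross terms by parts in $s$. The bounds \eqref{eq3.10} are then immediate, and the extension is a density argument.

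\emph{Step 1: expand each square.} Fix $r$ and expand
\[
\abs{F_s-\cD_rF}^2=\abs{F_s}^2-2\ip{F_s}{\cD_rF}+\abs{\cD_rF}^2 .
\]
By \eqref{eq3.6}, for each fixed $s$ the $t$-integral of $\abs{\cD_rF}^2$ equals that of $\abs{\nabla_tF}^2$. Multiplying by $s^a$ and integrating over $s$, the squared terms therefore contribute
\[
\int_{\HH}s^a\bigl(\abs{F_s}^2+\abs{\nabla_tF}^2\bigr)\dd s\dd t=\cB_a(F)
\]
for each $r$.

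\emph{Step 2: the cross term.} The operator $\cD_r$ has constant coefficients in $t$ and does not involve $s$, so it commutes with $\partial_s$. It is also formally self-adjoint in $\dd t$ by \eqref{eq3.6}. Hence, for fixed $s$,
\[
\partial_s\int_{\R^n}\ip{F}{\cD_rF}\dd t
=\int_{\R^n}\bigl(\ip{F_s}{\cD_rF}+\ip{F}{\cD_rF_s}\bigr)\dd t
=2\int_{\R^n}\ip{F_s}{\cD_rF}\dd t .
\]
Integrate this against $s^a$ and integrate by parts in $s$. Since $a=b+\ell\geq1$ and $F$ has compact support, the boundary terms at $s=0$ and at infinity vanish. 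This gives
\[
-2\int_{\HH}s^a\ip{F_s}{\cD_rF}\dd s\dd t
=a\int_{\HH}s^{a-1}\ip{F}{\cD_rF}\dd s\dd t .
\]

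\emph{Step 3: assemble.} Summing over $r$ and dividing by $a$, the first term on the right of \eqref{eq3.9} equals
\[
\frac{\ell}{a}\cB_a(F)+\sum_{r=1}^\ell\int_{\HH}s^{a-1}\ip{F}{\cD_rF}\dd s\dd t .
\]
Adding $\frac ba\cB_a(F)$ and using $(\ell+b)/a=1$, we recover the definition of $\cE_\ell(F)$ in \eqref{eq3.8}. This proves \eqref{eq3.9}.

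For \eqref{eq3.10}, the lower bound holds because the sum in \eqref{eq3.9} is nonnegative. For the upper bound, use $\abs{F_s-\cD_rF}^2\leq2\abs{F_s}^2+2\abs{\cD_rF}^2$ together with \eqref{eq3.6} again. Each $r$-term is then at most $2\cB_a(F)$, so
\[
\cE_\ell(F)\leq\frac{2\ell+b}{a}\cB_a(F)=\Bigl(1+\frac\ell a\Bigr)\cB_a(F).
\]

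\emph{Step 4: extension to the completion.} This is the only step needing care. Smooth compactly supported coefficient fields $F$ correspond to degree-$\ell$ projections of functions in $\mathscr C$. These projections are dense in the degree-$\ell$ subspace by Lemma~\ref{Lem3.1} and the continuity of $\Pi_\ell$ in \eqref{eq3.2}.

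By \eqref{eq3.10}, $\cE_\ell$ and $\cB_a$ are equivalent norms on these fields. Therefore a Cauchy sequence in energy is Cauchy in the weighted space $\{s^{a/2}F_s,\ s^{a/2}\nabla_tF\in L^2\}$. Each term in \eqref{eq3.9} is a continuous quadratic form in $(s^{a/2}F_s,\,s^{a/2}\nabla_tF)$, since $\abs{\cD_rF}$ is controlled by $\nabla_tF$ in full tensor norm. Hence both \eqref{eq3.9} and \eqref{eq3.10} pass to the limit.

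The limiting derivatives must be identified with the distributional derivatives of the limit field. This works as in the proof of Lemma~\ref{Lem2.1}: on compact subsets of $\HH$ the weight $s^a$ is bounded above and below, so weighted $L^2$ convergence there is ordinary local $L^2$ convergence.
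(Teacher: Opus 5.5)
Your proposal is correct and follows the paper's proof: you expand each square using \eqref{eq3.6}, integrate the cross term by parts in $s$, sum over $r$ with $a-\ell=b$, and extend to the completion by norm equivalence. Your upper bound via $\abs{A-B}^2\leq2\abs{A}^2+2\abs{B}^2$ is the parallelogram-law form of the paper's ``replace the minus sign by a plus sign'' and gives the same constant, since $2\ell+b=a+\ell$. The paper adds one ingredient for $\ell\geq1$: the weighted Hardy inequality $\int_{\HH}s^{a-2}\abs{F}^2\dd s\dd t\leq\frac{4}{(a-1)^2}\int_{\HH}s^a\abs{F_s}^2\dd s\dd t$, which makes the mixed integrals in \eqref{eq3.8} absolutely convergent on the completion. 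Your continuity argument through the square terms does not need it for \eqref{eq3.9}--\eqref{eq3.10} themselves.
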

	\begin{proof}
		By \eqref{eq3.6} and integration by parts in $s$,
		\[
		\begin{aligned}
			\int_{\HH}s^a\abs{F_s-\cD_rF}^2\dd s\dd t
			&=\cB_a(F)-2\int_{\HH}s^a\ip{F_s}{\cD_rF}\dd s\dd t\\
			&=\cB_a(F)+a\int_{\HH}s^{a-1}\ip{F}{\cD_rF}\dd s\dd t.
		\end{aligned}
		\]
		Sum over $r$ and use $a-\ell=b$ to obtain \eqref{eq3.9}. Replacing the minus sign in the square by a plus sign gives the upper bound in \eqref{eq3.10}. Together with \eqref{eq3.8}, these bounds identify the degree-$\ell$ completion with the completion of compactly supported coefficient fields for $\cB_a^{1/2}$. Every square term is continuous in this norm by \eqref{eq3.6}. The norm equivalence is used for each fixed $\ell$.
		For $\ell\geq1$ one has $a>1$. Integration of $\partial_s(s^{a-1}|F|^2)$ followed by Cauchy-Schwarz gives
		\[
		\int_{\HH}s^{a-2}|F|^2\dd s\dd t
		\leq\frac{4}{(a-1)^2}\int_{\HH}s^a|F_s|^2\dd s\dd t.
		\]
		Together with the $L^2$ identity for $\cD_r$, this also makes each mixed integral absolutely convergent on the completed coefficient space.
	\end{proof}
	
	For a finite-dimensional Euclidean space $E$, let $\cY_a(E)$ be the completion of $C_c^\infty(\HH;E)$ in the norm $\cB_a^{1/2}$, and write $\cY_a$ in the scalar case. The following identity controls the weighted mass.
	
	\begin{lemma}\label{Lem3.3}
		Let $a\geq1$ and set
		\[
		w_a=D^{-(a+n-1)/2},\qquad \lambda_a=a(a+n-1).
		\]
		For $F\in\cY_a(E)$,
		\begin{equation}\label{eq3.11}
			\cB_a(F)-\lambda_a\cN_a(F)
			=\int_{\HH}s^aw_a^2
			\abs{\nabla_{s,t}(F/w_a)}^2\dd s\dd t\geq0.
		\end{equation}
		If the right-hand side vanishes, then $F=w_a C$ for a constant $C\in E$.
	\end{lemma}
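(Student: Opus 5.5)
This is a ground-state transform, in the same spirit as the proof of \eqref{eq2.1} in Lemma~\ref{Lem2.1}. Work first with $F\in C_c^\infty(\HH;E)$ and put $F=w_aG$. Since $w_a$ is smooth and positive on the closed half-space $\overline{\HH}$, the field $G$ is again smooth with compact support in $\HH$.

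\textbf{Step 1: expand the square.} Expanding $s^aw_a^2|\nabla(F/w_a)|^2$ and integrating the cross term by parts (no boundary terms, by compact support) gives
\[
\int_{\HH}s^aw_a^2|\nabla_{s,t}(F/w_a)|^2
=\cB_a(F)+\int_{\HH}\frac{\nabla\cdot(s^a\nabla w_a)}{w_a}|F|^2\dd s\dd t.
\]
Since $\nabla\cdot(s^a\nabla w)=-s^{a-1}O_aw$, the identity \eqref{eq3.11} reduces to the pointwise equation
\[
O_aw_a=\lambda_aD^{-1}w_a\qquad\text{on }\HH.
\]

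\textbf{Step 2: verify the pointwise equation.} Write $w=D^{-k}$ with $k=(a+n-1)/2$. One has $D_s=2(1+s)$, $\nabla_tD=2t$, $D_{ss}=2$ and $\Delta_tD=2n$. The chain rule gives
\[
-s(\partial_s^2+\Delta_t)D^{-k}=kD^{-k-1}s(2+2n)-k(k+1)D^{-k-2}s\bigl(4(1+s)^2+4|t|^2\bigr)
=D^{-k-1}\bigl[2k(n+1)s-4k(k+1)s\bigr],
\]
and $-a\partial_sD^{-k}=2ak(1+s)D^{-k-1}$. Since $2k=a+n-1$, the coefficient of $s$ in the sum is
\[
2k(n+1)-4k(k+1)+2ak=2k(n+1-2k-2+a)=0.
\]
What remains is $2akD^{-k-1}=\lambda_aD^{-1}w_a$, as required. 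The choice of exponent is exactly what kills the $s$-terms, so this step is a routine check.

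\textbf{Step 3: extend to $\cY_a(E)$.} This is the main obstacle. The identity on $C_c^\infty$ gives $\lambda_a\cN_a(F)\le\cB_a(F)$, so $\cN_a$ is continuous on $\cY_a(E)$ and extends by density. Now take $F_j\to F$ in $\cY_a(E)$ with $F_j\in C_c^\infty(\HH;E)$. Then $\nabla(F_j/w_a)$ is Cauchy in $L^2(s^aw_a^2)$, by the identity applied to $F_j-F_k$. On compact subsets of $\HH$ the weights are bounded above and below and $F_j\to F$ in weighted $L^2$. As in Lemma~\ref{Lem2.1}, the limit is therefore identified with the distributional gradient of $F/w_a$, and \eqref{eq3.11} follows on the completion.

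\textbf{Step 4: equality case.} If the right-hand side of \eqref{eq3.11} vanishes, then $\nabla_{s,t}(F/w_a)=0$ distributionally on the connected set $\HH$, so $F/w_a=C$ is a constant vector. The hypothesis $a\ge1$ is not needed for the identity itself. It is the place to check that $w_a\in\cY_a(E)$, i.e.\ that $\cB_a(w_a)<\infty$ near $s=0$ and at infinity. It is not needed for the conclusion, however, since the statement only asserts the form $F=w_aC$.
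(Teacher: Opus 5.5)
Your proof is correct and follows essentially the same route as the paper. Both arguments verify $O_aw_a=\lambda_aD^{-1}w_a$, integrate by parts on the core via $O_a=-s^{1-a}\operatorname{div}_{s,t}(s^a\nabla_{s,t})$, extend the identity by completion with the gradient of $F/w_a$ identified on compact subsets of $\HH$, and use connectedness of $\HH$ for the equality case. The only cosmetic difference is that you check the pointwise equation by the chain rule on $D^{-k}$, whereas the paper computes $\Delta_{s,t}w_a$ and $(w_a)_s$ and uses $(1+s)-s=1$.
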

	\begin{proof}
		Writing $A=1+s$, the Euclidean radial derivatives are
		\[
		\Delta_{s,t}w_a=a(a+n-1)D^{-1}w_a,
		\qquad (w_a)_s=-(a+n-1)A D^{-1}w_a.
		\]
		Since $A-s=1$, these identities give $O_aw_a=\lambda_aD^{-1}w_a$. Moreover,
		\[
		O_a=-s^{1-a}\operatorname{div}_{s,t}(s^a\nabla_{s,t}).
		\]
		Integration by parts proves \eqref{eq3.11} on the core. In particular, $\cN_a\leq\lambda_a^{-1}\cB_a$, and the right-hand side is a continuous quadratic form in the core norm. Completion extends the identity. On compact subsets of $\HH$, the limit quotient has the stated weak gradient. If that gradient vanishes, connectedness of $\HH$ gives $F/w_a=C$.
	\end{proof}
	
	Since $s^{a-1}D^{-1}$ is bounded below on compact subsets of $\HH$, the estimate $\cN_a\leq\lambda_a^{-1}\cB_a$ realizes each coefficient completion as a space of locally square-integrable fields. If a Cauchy sequence in $\cB_a^{1/2}$ has zero weighted $L^2$ limit, its derivative limit vanishes in distributions on every compact subset of $\HH$. The global weighted derivative limit is therefore zero. This proves that the realization is injective, and the weak derivatives and representation \eqref{eq3.4} pass to the completion.
	
	Combining Lemmas~\ref{Lem3.2} and \ref{Lem3.3} gives
	\begin{equation}\label{eq3.12}
		\begin{aligned}
			\cE_\ell(F)-b(a+n-1)\cN_a(F)
			&=\frac1a\sum_{r=1}^\ell\int_{\HH}s^a
			\abs{F_s-\cD_rF}^2\dd s\dd t\\
			&\quad+\frac ba\int_{\HH}s^aw_a^2
			\abs{\nabla_{s,t}(F/w_a)}^2\dd s\dd t.
		\end{aligned}
	\end{equation}
	The constant on its left satisfies
	\begin{equation}\label{eq3.13}
		b(a+n-1)=\mu_\star+b(\ell-2).
	\end{equation}
	
	\begin{proposition}\label{Prop3.4}
		If $u\in\dS(G)$ satisfies \eqref{eq1.8} with $\mu\leq\mu_\star$, then $\Pi_\ell u=0$ for every $\ell\geq2$.
	\end{proposition}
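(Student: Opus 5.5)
Fix $\ell\geq2$ and write $u_\ell=\Pi_\ell u=\ip{F}{x^{\otimes\ell}}$ as in \eqref{eq3.4}, with $a=b+\ell$ and $F$ in the degree-$\ell$ coefficient completion. Since a weak solution of \eqref{eq1.8} satisfies the equation in each degree, \eqref{eq3.3} and \eqref{eq3.8} give $\cE_\ell(F)=\mu\cN_a(F)$, the factor $C_\ell$ cancelling. The plan is to compare this with the sum-of-squares identity \eqref{eq3.12}, which holds on the completion by Lemmas~\ref{Lem3.2} and~\ref{Lem3.3}; the extension to the completion is already recorded there.

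Substituting $\cE_\ell(F)=\mu\cN_a(F)$ into \eqref{eq3.12} and using \eqref{eq3.13}, the left side becomes
\[
\bigl(\mu-\mu_\star-b(\ell-2)\bigr)\cN_a(F).
\]
Since $\mu\leq\mu_\star$ and $\ell\geq2$, this is $\leq0$, while the right side is a sum of nonnegative terms. Hence both sides vanish.

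For $\ell\geq3$ the coefficient $-(\mu_\star-\mu)-b(\ell-2)$ is strictly negative because $b=m/2>0$. Vanishing then forces $\cN_a(F)=0$, so $F=0$ by the injectivity of the weighted realization noted after Lemma~\ref{Lem3.3}. The same holds when $\ell=2$ and $\mu<\mu_\star$.

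The main obstacle is the borderline case $\ell=2$, $\mu=\mu_\star$, where the left side is zero for free. Here I would use the vanishing of the right side. The ground-state term gives $F=w_aC$ with a constant trace-free symmetric tensor $C$, by the equality case of Lemma~\ref{Lem3.3}. The square terms give $F_s=\cD_rF$ for $r=1,2$, pointwise almost everywhere.

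I then insert $F=w_aC$. We have $(w_a)_s=-(a+n-1)(1+s)D^{-1}w_a$, while
\[
\cD_rF=\sum_\alpha(\partial_{t_\alpha}w_a)J_\alpha^{(r)}C,\qquad \partial_{t_\alpha}w_a=-(a+n-1)t_\alpha D^{-1}w_a .
\]
So the relation reads
\[
(1+s)C=\sum_\alpha t_\alpha J_\alpha^{(r)}C\quad\text{for all }(s,t)\in\HH.
\]
Taking $t=0$ gives $(1+s)C=0$, hence $C=0$ and $u_\ell=0$. One should check that the a.e. identity indeed holds for all $(s,t)$: $w_a$ is smooth and the relation is an identity between continuous functions, so this is immediate. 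A fallback in case of doubt: comparing coefficients of $s$ already gives $C=0$.
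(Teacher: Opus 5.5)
Your proposal is correct and follows the paper's proof essentially step by step. You insert $\cE_\ell(F)=\mu\cN_a(F)$ into \eqref{eq3.12}--\eqref{eq3.13} to get $\cN_a(F)=0$ outside the borderline case; in the case $\ell=2$, $\mu=\mu_\star$ you combine the equality case of Lemma~\ref{Lem3.3} with the vanishing square $F_s=\cD_rF$. The only difference is the final step: the paper deduces $C=0$ from the Clifford identity $((1+s)\Id-J_t^{(1)})^*((1+s)\Id-J_t^{(1)})=D\Id$, whereas you evaluate the continuous relation $(1+s)C=J_t^{(r)}C$ at $t=0$, which is equally valid and slightly more elementary.
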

	\begin{proof}
		For $\ell>2$, equations \eqref{eq3.3}, \eqref{eq3.12} and \eqref{eq3.13} force $\cN_a(F)=0$. The same is true for $\ell=2$ when $\mu<\mu_\star$. If $\ell=2$ and $\mu=\mu_\star$, both nonnegative terms in \eqref{eq3.12} vanish. Lemma~\ref{Lem3.3} gives $F=w_a C$, and the first square gives $F_s-\cD_1F=0$. Substitution yields
		\[
		0=-\frac{(a+n-1)w_a}{D}
		\bigl((1+s)\Id-J_t^{(1)}\bigr)C.
		\]
		By \eqref{eq1.1},
		\[
		\bigl((1+s)\Id-J_t^{(1)}\bigr)^*
		\bigl((1+s)\Id-J_t^{(1)}\bigr)=D\Id.
		\]
		Thus $C=0$, which proves vanishing also in degree two.
	\end{proof}
	
	\section{The lower harmonic degrees}\label{Sec4}
	\subsection{The scalar ball operator}
	
	For the two remaining horizontal degrees, we use a Euclidean change of variables on the auxiliary half-space $(s,t)\in\HH$.
	
	Let $N=n+1$ and let $\BB\subset\R^N$ be the unit ball, with coordinates $y=(y_0,y')$, where $y'=(y_1,\ldots,y_n)$. Define $\Psi:\HH\to\BB$ by
	\begin{equation}\label{eq4.1}
		y_0=\frac{s^2+\abs{t}^2-1}{D},\qquad
		y_\alpha=\frac{2t_\alpha}{D},\quad 1\leq\alpha\leq n.
	\end{equation}
	If
	\[
	e(y)=(1-y_0)^2+\abs{y'}^2,\qquad h(y)=1-\abs{y}^2,
	\]
	then its inverse is
	\[
	s=\frac{h}{e},\qquad t=\frac{2y'}{e},\qquad D=\frac4e.
	\]
	In particular $\Psi$ is a diffeomorphism. Direct differentiation gives
	\begin{equation}\label{eq4.2}
		h=\frac{4s}{D},\qquad
		(D\Psi)^T(D\Psi)=\frac4{D^2}\Id,\qquad
		\abs{\det D\Psi}=\left(\frac2D\right)^N.
	\end{equation}
	For $a\geq1$, put
	\[
	\dd\nu_a=h^{a-1}\dd y,
	\qquad \cL_a=-h\Delta_y+2a\,y\cdot\nabla_y
	=-h^{1-a}\operatorname{div}(h^a\nabla_y).
	\]
	Write
	\[
	B_a(g,k)=\int_{\BB}h\ip{\nabla g}{\nabla k}\dd\nu_a,
	\qquad B_a(g)=B_a(g,g),
	\]
	and define $\cW_a(E)$ as the completion of $C_c^\infty(\BB;E)$ for
	\[
	\norm{g}_{\cW_a}^2=\norm{g}_{L^2(\nu_a)}^2+B_a(g).
	\]
	Again the target $E$ is omitted in scalar statements. This completion embeds injectively into $L^2(\nu_a)$: on each compact subset of the ball the weights are positive and bounded above and below, so a Cauchy sequence whose $L^2$ limit is zero has zero distributional gradient limit there. Exhausting the ball shows that its weighted gradient limit is zero everywhere. Thus $B_a$ is a densely defined closed form in $L^2(\nu_a)$, with the completed weak gradient as its form gradient.
	
	\begin{lemma}\label{Lem4.1}
		The map $g\mapsto w_a(g\circ\Psi)$ extends to an isomorphism from $\cW_a(E)$ onto $\cY_a(E)$. For $F=w_a(g\circ\Psi)$,
		\begin{equation}\label{eq4.3}
			\cN_a(F)=A_a\norm{g}_{L^2(\nu_a)}^2,
			\qquad
			\cB_a(F)=A_a\bigl[\lambda_a\norm{g}_{L^2(\nu_a)}^2+B_a(g)\bigr],
			\qquad A_a=2^{-N}4^{1-a}.
		\end{equation}
		On smooth functions the corresponding operator identity is
		\begin{equation}\label{eq4.4}
			O_a\bigl(w_a(g\circ\Psi)\bigr)
			=D^{-1}w_a\bigl[(\lambda_a+\cL_a)g\bigr]\circ\Psi.
		\end{equation}
	\end{lemma}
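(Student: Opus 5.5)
The proof is a conformal change of variables followed by a ground-state substitution. First I will establish the pointwise operator identity \eqref{eq4.4} for smooth $g$ on $\BB$. Then I will derive the quadratic identities \eqref{eq4.3} on $C_c^\infty(\BB;E)$, and finally extend everything by density.

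For \eqref{eq4.4}, write $\tilde g=g\circ\Psi$ and use the product rule for the divergence-form operator $O_a=-s^{1-a}\operatorname{div}_{s,t}(s^a\nabla_{s,t})$:
\[
O_a(w_a\tilde g)=(O_aw_a)\tilde g-2s\ip{\nabla w_a}{\nabla\tilde g}+w_aO_a\tilde g .
\]
By the proof of Lemma~\ref{Lem3.3}, the first term equals $\lambda_aD^{-1}w_a\tilde g$. For the other two terms I use the conformality in \eqref{eq4.2}: the Jacobian satisfies $(D\Psi)^T(D\Psi)=4D^{-2}\Id$, so in $N=n+1$ dimensions
\[
\Delta_{s,t}\tilde g=\frac{4}{D^2}(\Delta_yg)\circ\Psi+(\Delta_{s,t}\Psi)\cdot(\nabla g)\circ\Psi .
\]
The only nontrivial inputs are the explicit components $\Delta_{s,t}\Psi_i$ and $\partial_s\Psi_i$, together with $\nabla\log w_a=-(a+n-1)\nabla D/(2D)$. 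Combining these with $s=hD/4$ turns the second-order part into $-D^{-1}h\Delta_yg$. I then check that all first-order terms collapse to $2aD^{-1}y\cdot\nabla g$. I expect this bookkeeping to be the main obstacle.

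To organize it, I would first try the Kelvin-type structure. Since $D=4/e$, the coordinate $D$ is related to the distance from $y=(1,0)$, and $\Psi$ is a composition of a translation, an inversion and a scaling. For such maps the Laplacian transforms with a conformal weight, so the first-order terms have a predictable form. If this approach gets messy, I will verify the identity coordinatewise on the generators $g=y_0$, $g=y_\alpha$ and $g=\abs{y}^2$. A second-order operator without zeroth-order term is determined by its action on these functions together with all products $y_iy_j$, so this check suffices.

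For \eqref{eq4.3}, the mass identity is a direct change of variables. With $\dd s\dd t=(D/2)^N\dd y$, $s^{a-1}=(hD/4)^{a-1}$ and $w_a^2=D^{-(a+n-1)}$, the powers of $D$ cancel exactly, since $(a-1)-1-(a+n-1)+N=0$. This leaves $2^{-N}4^{1-a}h^{a-1}\dd y$, which is $A_a\,\dd\nu_a$. For the energy, Lemma~\ref{Lem3.3} with $F/w_a=\tilde g$ gives
\[
\cB_a(F)=\lambda_a\cN_a(F)+\int_{\HH}s^aw_a^2\abs{\nabla\tilde g}^2\dd s\dd t .
\]
Since $\abs{\nabla\tilde g}^2=4D^{-2}\abs{\nabla g}^2\circ\Psi$, the extra factor $s\cdot 4D^{-2}\cdot D=h$ shows that the last integral equals $A_aB_a(g)$. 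Alternatively, the energy identity follows by pairing \eqref{eq4.4} with $s^{a-1}F$. On the core this is pure algebra.

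For the extension, \eqref{eq4.3} shows that the map $g\mapsto w_a\tilde g$ is bounded above and below from the $\cW_a$ norm to the $\cB_a^{1/2}$ norm. Here I use $\cN_a\le\lambda_a^{-1}\cB_a$, which makes $\cB_a^{1/2}$ equivalent to $(\cN_a+\cB_a)^{1/2}$. The map sends $C_c^\infty(\BB;E)$ bijectively onto $C_c^\infty(\HH;E)$, because $\Psi$ is a diffeomorphism and $w_a$ is smooth and positive. It therefore extends to an isomorphism of the completions $\cW_a(E)\to\cY_a(E)$. Both completions are realized injectively inside weighted $L^2$ spaces, as shown in the remarks following Lemma~\ref{Lem3.3} and preceding the present lemma, so the extended map is the pointwise formula $w_a(g\circ\Psi)$.
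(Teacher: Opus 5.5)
Your proposal is correct. For \eqref{eq4.3} and for the extension to the completions it follows the paper's argument:
\begin{itemize}
\item the mass formula comes from the Jacobian in \eqref{eq4.2}, with the powers of $D$ cancelling;
\item the energy formula comes from the ground-state identity \eqref{eq3.11}, together with $\abs{\nabla(g\circ\Psi)}^2=4D^{-2}\abs{\nabla g}^2\circ\Psi$ and $4s/D=h$;
\item the extension comes from the norm equivalence supplied by $\lambda_a>0$.
\end{itemize}

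The genuine difference is how you obtain \eqref{eq4.4}. The paper does not compute it; it polarizes \eqref{eq4.3}. For compactly supported $H=w_a(k\circ\Psi)$ one has $\cB_a(F,H)=\int_{\HH}s^{a-1}\ip{O_aF}{H}\dd s\dd t$ and $B_a(g,k)=\int_{\BB}\ip{\cL_ag}{k}\dd\nu_a$. The polarized mass identity turns $A_a\int_{\BB}\ip{\cdot}{k}\dd\nu_a$ back into an integral against $s^{a-1}D^{-1}H$. Since $k$ is arbitrary, \eqref{eq4.4} follows, and locality extends it to all smooth $g$.

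Your derivation of \eqref{eq4.3} via \eqref{eq3.11} never uses \eqref{eq4.4}. So the first-order bookkeeping you flag as the main obstacle can be skipped entirely.

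Your direct route does go through if you keep it. Use the M\"obius identity $\Delta_{s,t}\Psi_i=(N-2)D^{-1}\ip{\nabla D}{\nabla\Psi_i}$. Then the coefficient of $(\partial_{y_i}g)\circ\Psi$ becomes $\frac aD\bigl[2s(1+s)\partial_s\Psi_i+2s\,t\cdot\nabla_t\Psi_i-D\,\partial_s\Psi_i\bigr]$. By \eqref{eq4.18} this equals $\frac{2a}{D}\Psi_i$, as required.

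The two routes trade off as follows. Yours gives a self-contained pointwise verification for every smooth $g$. The paper's polarization is shorter and reuses the ground-state identity you already need for the energy formula.
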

	\begin{proof}
		For $g\in C_c^\infty(\BB;E)$, the Jacobian in \eqref{eq4.2} gives the mass formula after cancellation of the powers of $D$. Applying the same identity to the last integral in \eqref{eq3.11} gives $A_aB_a(g)$ and proves \eqref{eq4.3}. Polarization gives \eqref{eq4.4}. The transformation maps the compactly supported smooth cores bijectively onto each other. Since $\lambda_a>0$, the norm equivalence in \eqref{eq4.3} extends this map to the completions.
	\end{proof}
	
	Every polynomial belongs to $\cW_a$. To prove this, for $a>1$, a cutoff in $h$ which vanishes for $h<\varepsilon$ and equals one for $h>2\varepsilon$ has additional gradient energy bounded by
	\[
	C\varepsilon^{-2}\int_0^{2\varepsilon}h^a\dd h
	\leq C\varepsilon^{a-1}.
	\]
	For $a=1$, use a logarithmic cutoff with derivative at most $C/(h\abs{\log\varepsilon})$ between $h=\varepsilon^2$ and $h=\varepsilon$. Since $\abs{\nabla h}$ and the density of Euclidean volume in the boundary variable $h$ are bounded there, its additional energy is bounded by
	\[
	\frac{C}{\abs{\log\varepsilon}^2}
	\int_{\varepsilon^2}^{\varepsilon}\frac{\dd h}{h}
	=\frac{C}{\abs{\log\varepsilon}}\longrightarrow0.
	\]
	The $L^2(\nu_a)$ error and the term involving the polynomial's gradient tend to zero by dominated convergence. Thus every polynomial, including the constants, belongs to $\cW_a$ for all $a\geq1$.
	
	Let $P_j^{(\alpha,\beta)}$ denote the Jacobi polynomial of degree $j$.
	
	\begin{lemma}\label{Lem4.2}
		For $a\geq1$, choose an orthonormal basis of spherical harmonics in each degree $k$, and let $H_k$ range over their homogeneous harmonic extensions. The polynomials
		\begin{equation}\label{eq4.5}
			H_k(y)P_j^{(a-1,k+N/2-1)}(2\abs{y}^2-1),
			\qquad k,j\geq0,
		\end{equation}
		form a complete orthogonal eigenbasis of the nonnegative self-adjoint operator associated with the closed form $B_a$ on $\cW_a$. Their eigenvalues are
		\begin{equation}\label{eq4.6}
			\lambda_{k,j}^{(a)}
			=2ak+4j\left(j+k+a+\frac N2-1\right).
		\end{equation}
		The operator has compact resolvent. After normalizing the eigenpolynomials in $L^2(\nu_a)$, its form expansion is obtained by multiplying each squared eigenbasis coefficient by \eqref{eq4.6}. Consequently, the weak nullspace of $\cL_a$ in $\cW_a$ consists of constants, and
		\begin{equation}\label{eq4.7}
			\ker_{\cW_a}(\cL_a-2a)
			=\Span\{y_0,\ldots,y_n\}.
		\end{equation}
	\end{lemma}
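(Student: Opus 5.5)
The plan is to separate variables on the ball in polar coordinates $y=r\theta$. The first step is the eigenvalue equation on the polynomials \eqref{eq4.5}; the second is completeness; the third is the identification of the operator of the closed form with these eigenpolynomials.

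\textbf{Eigenvalue equation.} Write $\cL_a=-(1-r^2)\Delta_y+2ar\partial_r$ and take $g=H_k(y)p(r^2)$. Since $H_k$ is harmonic and homogeneous,
\[
\Delta_y(H_kp(r^2))=H_k\bigl[4r^2p''+(4k+2N)p'\bigr],\qquad r\partial_r(H_kp)=H_k\bigl(kp+2r^2p'\bigr).
\]
Putting $z=r^2$, the eigenvalue equation $\cL_ag=\lambda g$ becomes
\[
-(1-z)\bigl(4zp''+(4k+2N)p'\bigr)+2a(kp+2zp')=\lambda p.
\]
Substituting $x=2z-1$ turns this into the Jacobi equation with parameters $(\alpha,\beta)=(a-1,k+N/2-1)$. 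The polynomial solution of degree $j$ is $P_j^{(a-1,k+N/2-1)}$. Reading off the leading coefficient gives
\[
\lambda=2ak+4j\,(j+k+a+N/2-1),
\]
which is \eqref{eq4.6}.

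\textbf{Orthogonality and completeness.} The measure is $\dd\nu_a=(1-r^2)^{a-1}r^{N-1}\dd r\,\dd\theta$. Distinct $k$, or distinct spherical harmonics in the same degree, are orthogonal on the sphere. For fixed $k$, the substitution $x=2r^2-1$ turns the radial weight $(1-r^2)^{a-1}r^{2k+N-1}\dd r$ into a constant multiple of $(1-x)^{a-1}(1+x)^{k+N/2-1}\dd x$, the Jacobi weight. Hence the family \eqref{eq4.5} is orthogonal in $L^2(\nu_a)$.

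For completeness, note that every polynomial in $y$ decomposes as a sum of terms $H_k(y)\,q(|y|^2)$, with $H_k$ harmonic of degree $k$ and $q$ a polynomial. Each such $q$ is a finite combination of the Jacobi polynomials above, so the family spans all polynomials. Polynomials are dense in $L^2(\nu_a)$ because $\nu_a$ is a finite measure on a compact set (Stone–Weierstrass).

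\textbf{Identification with the form operator.} This is the main obstacle. Every polynomial lies in $\cW_a$, as shown just before the lemma. For a polynomial $p$ and $k\in C_c^\infty(\BB)$, integration by parts with $\cL_a=-h^{1-a}\operatorname{div}(h^a\nabla)$ gives
\[
B_a(p,k)=\int_{\BB}(\cL_ap)\,k\,\dd\nu_a.
\]
Both sides are continuous in $k$ for the $\cW_a$ norm, since $|\nabla p|$ and $\cL_ap$ are bounded. Density of $C_c^\infty(\BB)$ in $\cW_a$ then extends the identity to all $k\in\cW_a$. So each eigenpolynomial lies in the domain of the self-adjoint operator $T$ associated with $B_a$, with the eigenvalue computed above.

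Since $T$ is self-adjoint and has a complete orthonormal family of eigenvectors, $T$ is diagonal in this basis. Its domain and form domain are described by the weighted coefficient sums, which gives the stated form expansion. The eigenvalues \eqref{eq4.6} tend to infinity and each has finite multiplicity, so the resolvent is compact.

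\textbf{Kernel statements.} The eigenvalue \eqref{eq4.6} vanishes only for $k=j=0$, so the weak nullspace consists of constants. The eigenvalue equals $2a$ only when $k=1,j=0$: any $j\ge1$ gives $4j(j+k+a+N/2-1)>4a>2a$, and $k\ge2$ gives $2ak\ge4a$. The case $k=1,j=0$ gives the linear functions, spanned by $y_0,\ldots,y_n$, which proves \eqref{eq4.7}.
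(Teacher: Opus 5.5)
Your proof is correct and follows the paper's argument essentially step by step. You reduce $\cL_a(H_kp(|y|^2))$ to the Jacobi equation with parameters $(a-1,k+N/2-1)$, get orthogonality from spherical harmonics and the Jacobi weight, and get completeness from the harmonic decomposition of polynomials plus density. You then extend the weak eigen-identity from compactly supported tests to all of $\cW_a$ to place each eigenpolynomial in the operator domain, use the spectral theorem for the form expansion and compact resolvent, and read off both kernels from \eqref{eq4.6}.
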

	\begin{proof}
		Let $z=\abs{y}^2$. Direct computation gives
		\[
		\begin{aligned}
			\cL_a\bigl(H_k(y)f(z)\bigr)
			=H_k(y)\bigl[&-4z(1-z)f''(z)\\
			&+4\bigl((a+k+N/2)z-(k+N/2)\bigr)f'(z)
			+2akf(z)\bigr].
		\end{aligned}
		\]
		The Jacobi equation with parameters $a-1$ and $k+N/2-1$ gives \eqref{eq4.5}-\eqref{eq4.6}. Both parameters exceed $-1$. Orthogonality follows from spherical harmonics and the one-variable Jacobi weight under $z=r^2$ \cite{DunklXu2014}.
		
		The decomposition of homogeneous polynomials into powers of $|y|^2$ times harmonic polynomials, followed by the triangular change to Jacobi polynomials, shows that \eqref{eq4.5} spans all polynomials. Polynomials are uniformly dense in continuous functions on the closed ball, and hence dense in $L^2(\nu_a)$. The cutoff argument above places each eigenpolynomial in $\cW_a$. Its equation, tested against compactly supported smooth functions, extends by continuity to every test function in $\cW_a$.
		
		The weak identities place every eigenpolynomial in the domain of the self-adjoint operator associated with $B_a$. Normalizing this orthogonal family gives a complete orthonormal eigenbasis in $L^2(\nu_a)$. The spectral theorem gives the operator and form expansions. Since the eigenvalues in \eqref{eq4.6} tend to infinity as $k+j\to\infty$ and have finite multiplicity, the operator has compact resolvent. Finite eigenpolynomial sums are also dense in the form norm.
		
		Testing a weak eigenfunction against this basis determines its expansion. Formula~\eqref{eq4.6} vanishes exactly at $(k,j)=(0,0)$ and equals $2a$ exactly at $(k,j)=(1,0)$. Indeed, for $j\geq1$ it is at least $4a+2N>2a$. This proves both kernel statements in $\cW_a$.
	\end{proof}
	
	The following estimate keeps track of the angular degree.
	
	\begin{lemma}\label{Lem4.3}
		If $g_k\in\cW_a(E)$ has Euclidean spherical harmonic degree $k$ in $y/\abs{y}$, then
		\begin{equation}\label{eq4.8}
			B_a(g_k)\geq2ak\norm{g_k}_{L^2(\nu_a)}^2.
		\end{equation}
	\end{lemma}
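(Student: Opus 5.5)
The plan is to read \eqref{eq4.8} off the spectral expansion of Lemma~\ref{Lem4.2}, working one component at a time. Fix an orthonormal basis $(e_p)$ of the finite-dimensional space $E$ and write $g_k=\sum_p g_k^{(p)}e_p$. Both sides of \eqref{eq4.8} split as sums over $p$: the form $B_a$ involves $\abs{\nabla g}^2=\sum_p\abs{\nabla g^{(p)}}^2$, and the $L^2(\nu_a)$ norm splits in the same way. Each component lies in the scalar space $\cW_a$, because projection onto $e_p$ is continuous on the cores and so extends to the completions. Each component still has angular degree $k$. It therefore suffices to treat scalar $g=g_k\in\cW_a$.

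\emph{Step 1.} I interpret ``degree $k$'' for an element of the completion as follows. For almost every radius $r\in(0,1)$, the restriction $g(r\,\cdot)$ lies in the degree-$k$ spherical harmonic space on $S^{N-1}$. This is the same as saying that $g$ is fixed by the $L^2$ angular projection onto that space.

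\emph{Step 2.} Expand $g$ in the normalized eigenbasis \eqref{eq4.5} of $L^2(\nu_a)$. The density $h^{a-1}=(1-\abs{y}^2)^{a-1}$ is radial. Polar coordinates and Fubini therefore express the coefficient of $H_{k'}(y)P_j^{(a-1,k'+N/2-1)}(2\abs y^2-1)$ as a radial integral of the angular inner product of $g(r\,\cdot)$ with $H_{k'}|_{S^{N-1}}$. By Step 1 this inner product vanishes for $k'\neq k$. Hence only the basis elements with angular index $k$ and arbitrary $j\geq0$ appear.

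\emph{Step 3.} Apply the form expansion from Lemma~\ref{Lem4.2}. Write $\hat g_{k,j,i}$ for the coefficients, where $i$ indexes the chosen orthonormal spherical harmonics of degree $k$. Then
\[
B_a(g)=\sum_{j,i}\lambda_{k,j}^{(a)}\abs{\hat g_{k,j,i}}^2 .
\]
By \eqref{eq4.6}, every $j\geq0$ satisfies $\lambda_{k,j}^{(a)}=2ak+4j(j+k+a+N/2-1)\geq 2ak$. Parseval then gives
\[
B_a(g)\geq 2ak\sum_{j,i}\abs{\hat g_{k,j,i}}^2=2ak\norm{g}_{L^2(\nu_a)}^2 .
\]
Summing this over the components $p$ yields \eqref{eq4.8}.

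The only delicate point is the passage from smooth functions to the completion, that is, making sure the form expansion applies to an arbitrary $g\in\cW_a$. Lemma~\ref{Lem4.2} already supplies this, because it identifies $B_a$ with the closed form of a self-adjoint operator that has a complete eigenbasis. A direct alternative would avoid the eigenbasis altogether. Write $g=\sum_i f_i(r)H_{k,i}(y/r)$ and note that the angular part of $\abs{\nabla g}^2$ contributes $k(k+N-2)r^{-2}\abs g^2$. A one-dimensional weighted Hardy-type inequality in $r$ would then be needed to get the constant $2ak$. That inequality is less transparent, so I would rely on the spectral route.
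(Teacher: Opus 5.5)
Your proof is correct, but it takes a different route from the paper.

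\textbf{The paper's route.} The paper does not use the eigenbasis of Lemma~\ref{Lem4.2}. For a smooth compactly supported component $g_k=r^kf(r)Y_k(\theta)$, it integrates the radial cross term $2kr^{2k-1}ff'$ by parts against $(1-r^2)^ar^{N-1}$. The derivative of $r^{2k+N-2}$ cancels the terms $k^2+k(k+N-2)$ exactly. The derivative of $(1-r^2)^a$ produces $2ak(1-r^2)^{a-1}$. What remains is the nonnegative term $\int_0^1(1-r^2)^ar^{2k}\abs{f'}^2r^{N-1}\dd r$. So on the core, $B_a(g_k)$ equals $2ak\norm{g_k}_{L^2(\nu_a)}^2$ plus this remainder. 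Boundedness of the angular projection in the form norm then carries \eqref{eq4.8} to $\cW_a(E)$.

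In particular, the weighted Hardy-type inequality you set aside as less transparent is an exact ground-state identity with ground state $H_k=r^kY_k$. It costs one integration by parts.

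\textbf{Your route.} You read the bound off the form expansion of Lemma~\ref{Lem4.2}. A degree-$k$ element has coefficients only on eigenpolynomials with angular index $k$, and all of their eigenvalues in \eqref{eq4.6} are at least $2ak$.

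\textbf{What each buys.} Your approach works directly on the completion, with no separate approximation by smooth degree-$k$ functions. It also exhibits $2ak$ as the bottom of the spectrum in the degree-$k$ sector. The price is that it rests on the full strength of Lemma~\ref{Lem4.2}. That means completeness of the polynomial eigenbasis, membership of the eigenpolynomials in the operator domain (the cutoff argument), and identification of $\cW_a$ with the form domain. The paper's identity is elementary and independent of those facts, and it gives an explicit remainder that characterizes equality.

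Your appeal to Lemma~\ref{Lem4.2} is not circular, since its proof does not use \eqref{eq4.8}.
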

	\begin{proof}
		For a smooth single component write $g_k=r^kf(r)Y_k(\theta)$, with $\norm{Y_k}_{L^2(S^{N-1})}=1$. Integration of the radial cross term gives
		\[
		\begin{aligned}
			&\int_0^1(1-r^2)^a
			\left(\abs{(r^kf)'}^2
			+\frac{k(k+N-2)}{r^2}\abs{r^kf}^2\right)r^{N-1}\dd r\\
			&\quad=2ak\int_0^1(1-r^2)^{a-1}r^{2k}\abs{f}^2r^{N-1}\dd r
			+\int_0^1(1-r^2)^ar^{2k}\abs{f'}^2r^{N-1}\dd r.
		\end{aligned}
		\]
		At $r=0$, the boundary term vanishes for $k\geq1$ because $2k+N-2>0$, and its coefficient is zero for $k=0$. At $r=1$, it vanishes on the compactly supported core. Summation over a spherical harmonic basis proves the inequality for smooth functions. Angular projection is bounded in the scalar form norm, so the inequality extends to $\cW_a(E)$ by density.
	\end{proof}
	
	\subsection{The Clifford angular operator}
	
	For $V$-valued smooth functions on $\BB$, define
	\begin{equation}\label{eq4.9}
		\begin{aligned}
			\Gamma={}&-\sum_{\alpha=1}^nJ_\alpha
			(y_0\partial_{y_\alpha}-y_\alpha\partial_{y_0})-\sum_{\alpha<\beta}J_\alpha J_\beta
			(y_\alpha\partial_{y_\beta}-y_\beta\partial_{y_\alpha}).
		\end{aligned}
	\end{equation}
	The matrices in both sums are skew-adjoint. The rotation fields are also skew-adjoint for every radial measure, and they commute with the constant matrices. Hence $\Gamma$ is symmetric. It is tangential to Euclidean spheres and preserves each spherical harmonic degree.
	
	\begin{lemma}\label{Lem4.4}
		On the $V$-valued spherical harmonics of degree $k\geq1$, the eigenvalues of $\Gamma$ are $-k$ and $k+N-2$, with respective multiplicities
		\begin{equation}\label{eq4.10}
			m\binom{k+n-1}{n-1},\qquad
			m\binom{k+n-2}{n-1}.
		\end{equation}
		Both eigenspaces are nonzero. On constants, $\Gamma=0$ with multiplicity $m$.
	\end{lemma}
	\begin{proof}
		On the doubled module $V\oplus V$, define
		\[
		E_0=\begin{pmatrix}0&\Id\\-\Id&0\end{pmatrix},
		\qquad
		E_\alpha=\begin{pmatrix}0&J_\alpha\\J_\alpha&0\end{pmatrix}.
		\]
		They satisfy $E_i^*=-E_i$ and $E_iE_j+E_jE_i=-2\delta_{ij}\Id$. Define
		\[
		\widetilde\Gamma
		=-\sum_{0\leq i<j\leq n}E_iE_j
		(y_i\partial_{y_j}-y_j\partial_{y_i}),
		\qquad
		\mathscr D=\sum_{i=0}^nE_i\partial_{y_i},
		\qquad \mathbf y=\sum_{i=0}^nE_i y_i.
		\]
		The products are block diagonal, with
		\[
		(E_0E_\alpha)|_{V\oplus\{0\}}=J_\alpha,
		\qquad
		(E_\alpha E_\beta)|_{V\oplus\{0\}}=J_\alpha J_\beta.
		\]
		Thus the first copy of $V$ is invariant under $\widetilde\Gamma$, and its restriction there is \eqref{eq4.9}. Writing $\mathscr E=y\cdot\nabla_y$, anticommutation gives
		\begin{equation}\label{eq4.11}
			\mathscr D^2=-\Delta_y,\qquad
			\mathbf y\mathscr D=-\mathscr E-\widetilde\Gamma,
			\qquad
			\mathscr D\mathbf y+\mathbf y\mathscr D=-2\mathscr E-N.
		\end{equation}
		If $H$ is a $(V\oplus V)$-valued homogeneous harmonic polynomial of degree $k\geq1$, put
		\[
		N_{k-1}=-\frac{\mathscr DH}{2k+N-2},
		\qquad M_k=H-\mathbf yN_{k-1}.
		\]
		The denominator is positive. Equations~\eqref{eq4.11} show that
		\[
		\mathscr DN_{k-1}=0,\qquad \mathscr DM_k=0.
		\]
		They also give
		\[
		\widetilde\Gamma M_k=-kM_k,\qquad
		\widetilde\Gamma(\mathbf yN_{k-1})
		=(k+N-2)\mathbf yN_{k-1}.
		\]
		Thus every degree-$k$ harmonic polynomial is the sum of vectors in the two indicated eigenspaces. They are orthogonal because $\widetilde\Gamma$ is symmetric on the sphere. To obtain the multiplicities, first let $H$ take values in the first copy of $V$. Then $M_k$ takes values in that copy, while $N_{k-1}$ takes values in the second copy. A homogeneous polynomial $M$ satisfying $\mathscr DM=0$ is determined by its restriction $M(0,y')$ to $y_0=0$, since
		\[
		\partial_{y_0}M=E_0\sum_{\alpha=1}^nE_\alpha\partial_{y_\alpha}M.
		\]
		Conversely, any homogeneous degree-$k$ polynomial $A(y')$ valued in either fixed copy has the unique polynomial extension
		\[
		M(y_0,y')=\sum_{r=0}^k\frac{y_0^r}{r!}
		\left(E_0\sum_{\alpha=1}^nE_\alpha\partial_{y_\alpha}\right)^r A(y').
		\]
		The coefficient matrices are block diagonal, so the extension stays in the chosen copy of $V$. The space of these extensions has dimension $m\binom{k+n-1}{n-1}$. Each extension is harmonic because $\mathscr D^2=-\Delta_y$. Multiplication by $\mathbf y$ is injective because $\mathbf y^2=-|y|^2\Id$. Thus the space of polynomials $\mathbf yN_{k-1}$ taking values in the first copy of $V$ has dimension $m\binom{k+n-2}{n-1}$. The decomposition and the distinct eigenvalues give \eqref{eq4.10}. This is the monogenic decomposition for the angular Clifford operator \cite{BrackxDelangheSommen1982}.
	\end{proof}
	
	On compactly supported smooth functions set
	\[
	\gamma_a(g,k)=\int_{\BB}\ip{\Gamma g}{k}\dd\nu_a.
	\]
	The following bounds extend this form to $\cW_a(V)$.
	
	\begin{lemma}\label{Lem4.5}
		The form $\gamma_a$ extends to a continuous symmetric bilinear form on $\cW_a(V)$. For $g\in\cW_a(V)$,
		\begin{equation}\label{eq4.12}
			\gamma_a(g,g)\geq-\frac1{2a}B_a(g),\qquad
			\abs{\gamma_a(g,g)}
			\leq\frac1{2a}B_a(g)+(N-2)\norm{g}_{L^2(\nu_a)}^2.
		\end{equation}
		If $a>1$, the form
		\[
		Q_a(g,k)=B_a(g,k)+2\gamma_a(g,k)
		\]
		satisfies
		\begin{equation}\label{eq4.13}
			Q_a(g,g)\geq\left(1-\frac1a\right)B_a(g).
		\end{equation}
		For $a>1$, every $g\in\cW_a(V)$ such that $Q_a(g,k)=0$ for all compactly supported smooth $k$ is constant.
	\end{lemma}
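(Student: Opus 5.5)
The plan is to diagonalize everything along Euclidean spherical harmonic degree in $y/\abs{y}$ and then combine Lemma~\ref{Lem4.4} with Lemma~\ref{Lem4.3}. The weights $h^{a-1}$ and $h^a$ are radial. Hence the degree projections are orthogonal for both $L^2(\nu_a)$ and $B_a$, and they are bounded on $\cW_a(V)$; this boundedness is already used in the proof of Lemma~\ref{Lem4.3}. For $g\in C_c^\infty(\BB;V)$ write $g=\sum_k g_k$.

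The first step is to split $g_k$ further on each sphere. On each sphere $\abs{y}=r$ the restriction of $g_k$ is a $V$-valued spherical harmonic of degree $k$. Lemma~\ref{Lem4.4} therefore splits it orthogonally, for every $r$, into the $\Gamma$-eigenspaces with eigenvalues $-k$ and $k+N-2$ (only the eigenvalue $0$ occurs when $k=0$). Both projections commute with radial multiplication. Since $\Gamma$ is tangential and preserves degree, integrating over spheres and then in $r$ against the radial weight gives
\[
-k\norm{g_k}_{L^2(\nu_a)}^2\le\gamma_a(g_k,g_k)\le(k+N-2)\norm{g_k}_{L^2(\nu_a)}^2,
\]
and $\gamma_a(g_k,g_{k'})=0$ for $k\ne k'$. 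Lemma~\ref{Lem4.3} gives $k\norm{g_k}^2\le B_a(g_k)/(2a)$. Hence $\gamma_a(g_k,g_k)\ge-B_a(g_k)/(2a)$ and $\gamma_a(g_k,g_k)\le B_a(g_k)/(2a)+(N-2)\norm{g_k}^2$, where $N-2=n-1\ge0$. Summing over $k$ by orthogonality yields \eqref{eq4.12} on the core.

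The second step is the extension. Estimate \eqref{eq4.12} bounds $\abs{\gamma_a(g,g)}$ by $C\norm g_{\cW_a}^2$. Because $\gamma_a$ is symmetric on the core, polarization $\gamma_a(g,k)=\frac14[\gamma_a(g+k)-\gamma_a(g-k)]$ gives $\abs{\gamma_a(g,k)}\le C\norm g_{\cW_a}\norm k_{\cW_a}$. Thus $\gamma_a$ extends uniquely to a continuous symmetric form, and \eqref{eq4.12} passes to the completion by density. Estimate \eqref{eq4.13} is then immediate, since $Q_a(g,g)\ge B_a(g)-\frac1aB_a(g)$.

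The third step is the kernel statement. If $Q_a(g,k)=0$ for all compactly supported smooth $k$, continuity of $Q_a$ on $\cW_a(V)$ extends this to all $k\in\cW_a(V)$. Taking $k=g$ and using \eqref{eq4.13} with $a>1$ gives $B_a(g)=0$. Applying Lemma~\ref{Lem4.2} componentwise, that is, the form expansion in which only the $(k,j)=(0,0)$ eigenvalue vanishes, shows that each component of $g$ is constant. Constants do lie in $\cW_a(V)$ by the polynomial cutoff argument.

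I expect the main obstacle to be the first step: justifying rigorously that the pointwise-in-$r$ eigen-splitting of Lemma~\ref{Lem4.4} yields the two-sided bound for $\gamma_a$ on degree-$k$ pieces with smooth radial profiles. In particular, the cross terms between the two eigenspaces must vanish after radial integration. I would first handle this by expanding $g_k$ in an orthonormal eigenbasis of $\Gamma$ on the degree-$k$ harmonics, with scalar radial coefficients, exactly as in the proof of Lemma~\ref{Lem4.3}.
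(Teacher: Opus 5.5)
Your proposal is correct and follows the paper's proof essentially step for step: you apply Lemma~\ref{Lem4.4} and \eqref{eq4.8} degree by degree, use polarization with rescaling for continuity, and then pass from $Q_a(g,g)=0$ to $B_a(g)=0$. The only deviation is cosmetic---you conclude constancy from the form expansion of Lemma~\ref{Lem4.2}, whereas the paper just uses $h>0$ on the connected ball---and the cross-term worry you flag is harmless, since on each sphere the bound is simply the quadratic-form bound for the symmetric operator $\Gamma$ on the finite-dimensional space of degree-$k$ harmonics.
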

	\begin{proof}
		Expand a smooth function in spherical harmonics, $g=\sum_{k\geq0}g_k$. Lemma~\ref{Lem4.4} and estimate~\eqref{eq4.8} of Lemma~\ref{Lem4.3} imply
		\[
		\gamma_a(g,g)\geq-\sum_{k\geq0}k\norm{g_k}_{L^2(\nu_a)}^2
		\geq-\frac1{2a}B_a(g).
		\]
		The upper angular eigenvalue gives the absolute bound in \eqref{eq4.12}. Angular convergence extends these estimates from finite sums to smooth functions. Polarization, followed by rescaling the two arguments, gives
		\[
		\abs{\gamma_a(g,k)}\leq C_a\norm{g}_{\cW_a}\norm{k}_{\cW_a}.
		\]
		Hence the bilinear form is continuous in the $\cW_a$ norm. This proves its extension and \eqref{eq4.13}. By continuity, the identity $Q_a(g,k)=0$ holds for every $k\in\cW_a(V)$. Taking $k=g$ gives $B_a(g)=0$ when $a>1$. Since $h>0$ inside the connected ball, $g$ is constant.
	\end{proof}
	
	For $g\in\cW_a(V)$, we interpret the angular term through the continuous form $\gamma_a$. This definition includes all the products $J_\alpha J_\beta$ in \eqref{eq4.9}.
	
	For horizontal degree one, define on $\HH$
	\[
	\begin{gathered}
		\cD=\sum_{\alpha=1}^nJ_\alpha\partial_{t_\alpha},\qquad
		\cO_a=O_a+\cD,\qquad
		q=(1+s)\Id+J_t,\\
		M_a=qD^{-(a+n)/2},\qquad
		\mu_a=(a-1)(a+n),\qquad a>1.
	\end{gathered}
	\]
	In particular,
	\begin{equation}\label{eq4.14}
		M_a^*M_a=w_a^2\Id,
	\end{equation}
	so $M_a$ is invertible on $\HH$.
	
	\begin{lemma}\label{Lem4.6}
		For $g\in C^\infty(\BB;V)$ and $F=M_a(g\circ\Psi)$,
		\begin{equation}\label{eq4.15}
			\cO_aF=D^{-1}M_a
			\bigl[(\mu_a+\cL_a+2\Gamma)g\bigr]\circ\Psi.
		\end{equation}
	\end{lemma}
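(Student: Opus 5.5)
The identity \eqref{eq4.15} is a conjugation formula, and I would prove it the way \eqref{eq4.4} was proved: factor $M_a$ into a scalar part and a matrix part, and compute each contribution separately. Write $M_a=w_a\,P$ with
\[
P=qD^{-1/2},\qquad P^*P=\Id
\]
by \eqref{eq4.14}, so that $P$ is pointwise orthogonal. Put $G=g\circ\Psi$. Then
\[
\cO_aF=O_a(w_aPG)+\cD(w_aPG).
\]
Expand $O_a(w_aPG)$ by the Leibniz rule for the second-order operator $O_a=-s(\partial_s^2+\Delta_t)-a\partial_s$. Treating $PG$ as the unknown and $w_a$ as the scalar weight, \eqref{eq4.4} handles the scalar conjugation. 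I would also use the cleaner form $O_a=-s^{1-a}\operatorname{div}(s^a\nabla)$. This gives
\[
O_a(w_a\Phi)=D^{-1}w_a\bigl[(\lambda_a+\cL_a)(\Phi\circ\Psi^{-1})\bigr]\circ\Psi,
\]
valid for vector-valued $\Phi$ as well. Taking $\Phi=PG$ leaves the task of commuting $\cL_a$ past the matrix $\widetilde P=P\circ\Psi^{-1}$ on the ball.

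The next step is to express $P$ in ball coordinates, where it should be affine up to a scalar. Using $D=4/e$, $1+s=(h+e)/e$ and $t=2y'/e$, we get
\[
q=\frac{(h+e)\Id+2J_{y'}}{e}.
\]
Here $h+e=2(1-y_0)$, so $q=2e^{-1}\bigl[(1-y_0)\Id+J_{y'}\bigr]$. Hence
\[
\widetilde P=e^{-1/2}\bigl[(1-y_0)\Id+J_{y'}\bigr],
\]
where $J_{y'}=\sum_\alpha y_\alpha J_\alpha$. The bracket is a Clifford-linear expression $L(y)$ with $L^*L=e\,\Id$. I then compute
\[
\cL_a(e^{-1/2}L\,g)=e^{-1/2}L\,\cL_a g+\bigl(\cL_a(e^{-1/2}L)\bigr)g-2h\,\nabla(e^{-1/2}L)\cdot\nabla g.
\]
The zeroth-order term $\cL_a(e^{-1/2}L)$ should turn out to be a scalar multiple of $e^{-1/2}L$. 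The first-order cross term $-2h\,\nabla(e^{-1/2}L)\cdot\nabla g$ must combine with the pullback of $\cD F$. Under $\Psi$, the operator $\partial_{t_\alpha}$ becomes a combination of $\partial_{y_0},\partial_{y_\beta}$ with conformal factor $2/D$, by \eqref{eq4.2}. After left multiplication by $L^{-1}=e^{-1}L^*$, the combined first-order terms should reduce to the rotation fields $y_0\partial_{y_\alpha}-y_\alpha\partial_{y_0}$ and $y_\alpha\partial_{y_\beta}-y_\beta\partial_{y_\alpha}$, with matrix coefficients $J_\alpha$ and $J_\alpha J_\beta$ respectively. That is, they should give $2\Gamma g$. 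The anticommutation relations \eqref{eq1.1} are used to rewrite $L^*J_\alpha L$ and $L^*\partial L$. The Euler-type terms $y\cdot\nabla$ that appear must cancel.

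Finally I would collect the scalar terms. One contribution comes from the zeroth-order term, and another from $\cD$ acting on $w_aP$. The latter is computable because $\cD q=\sum_\alpha J_\alpha J_\alpha=-n\Id$ and $\cD D^{-(a+n)/2}$ involves $J_t$. Together they must shift $\lambda_a=a(a+n-1)$ to $\mu_a=(a-1)(a+n)$, a net change of $-(2a+n)+\ldots$ that I would verify by evaluating at a convenient point. A useful check is $g=$ constant and $a$ generic. There \eqref{eq4.15} predicts $\cO_a M_a C=\mu_aD^{-1}M_aC$, since $\Gamma C=0$. This can be verified directly on $\HH$ and fixes the constant.

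The main obstacle is the first-order bookkeeping. Showing that the cross terms from $\cL_a$ together with $\cD$ assemble exactly into $2\Gamma$, including the $J_\alpha J_\beta$ products and the correct signs, requires careful use of the Clifford identities. I would organize this computation by doing it at the level of $L$, which is linear in $y$, rather than $q$.
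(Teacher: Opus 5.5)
Your plan is essentially the paper's proof. You factor $M_a=w_aR$ with $R=qD^{-1/2}$ pointwise orthogonal, apply the scalar conjugation \eqref{eq4.4} componentwise, get the zeroth-order term from $\cO_aM_a=\mu_aD^{-1}M_a$, and identify the extra first-order matrix coefficients with those of $2\Gamma$. The only difference is that you do the matrix bookkeeping on the ball with the affine factor $L=(1-y_0)\Id+J_{y'}$, whereas the paper does it on $\HH$ via \eqref{eq4.17}--\eqref{eq4.19}. Your version closes as you predict. One finds $\widetilde P^{-1}\partial_{y_0}\widetilde P=e^{-1}J_{y'}$, $\widetilde P^{-1}\partial_{y_\beta}\widetilde P=e^{-1}\bigl[(1-y_0)J_\beta+\sum_{\gamma\ne\beta}y_\gamma J_\beta J_\gamma\bigr]$, $\partial_{t_\alpha}=(1-y_0)y_\alpha\partial_{y_0}+\tfrac e2\partial_{y_\alpha}-y_\alpha\,y'\cdot\nabla_{y'}$ and $L^*J_{y'}L=eJ_{y'}$. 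These make the Euler terms cancel and give exactly the coefficients $2J_{y'}$ of $\partial_{y_0}$ and $-2y_0J_\beta+2\sum_{\gamma\ne\beta}y_\gamma J_\beta J_\gamma$ of $\partial_{y_\beta}$.

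One correction to a side claim: $\cL_a(e^{-1/2}L)=e^{-3/2}L\bigl[(N-1)h\Id+2aJ_{y'}\bigr]$ is not a scalar multiple of $e^{-1/2}L$. Its $J_{y'}$ part cancels only against the $\cD$-contributions from $w_a$ and from $P$, leaving the scalar shift $-n=\mu_a-\lambda_a$. So evaluating at a single point cannot replace the computation. What the zeroth-order step actually needs is your constant-$g$ identity checked on all of $\HH$, which is exactly \eqref{eq4.16}.
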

	\begin{proof}
		Write $A=1+s$, $T=J_t$ and $\beta_a=(a+n)/2$. Thus $q=A\Id+T$, $q^{-1}=(A\Id-T)/D$ and $M_a=qD^{-\beta_a}$. Direct differentiation gives
		\[
		\begin{aligned}
			\Delta_{s,t}M_a&=\frac{2\beta_a(a-1)}D M_a,\\
			\partial_sM_a&=D^{-\beta_a}
			\left(\Id-\frac{2\beta_a A}Dq\right),\\
			\cD M_a&=D^{-\beta_a}
			\left(-n\Id-\frac{2\beta_a}DTq\right).
		\end{aligned}
		\]
		Since
		\[
		((a+s)\Id-T)q=D\Id+(a-1)q,
		\]
		substitution into $\cO_a$ gives
		\begin{equation}\label{eq4.16}
			\cO_aM_a=\mu_aD^{-1}M_a.
		\end{equation}
		This determines the zeroth-order term.
		
		For the first-order coefficients, write $M_a=w_aR$ with $R=q/\sqrt D$. Then
		\begin{equation}\label{eq4.17}
			\begin{aligned}
				R^{-1}R_s&=-\frac TD,\\
				R^{-1}R_{t_\alpha}
				&=\frac{AJ_\alpha-TJ_\alpha-t_\alpha\Id}{D},\\
				R^{-1}J_\alpha R
				&=\frac{(A^2-\abs{t}^2)J_\alpha
					+A(J_\alpha T-TJ_\alpha)+2t_\alpha T}{D}.
			\end{aligned}
		\end{equation}
		The coordinate derivatives in \eqref{eq4.1} are
		\begin{equation}\label{eq4.18}
			\begin{aligned}
				(y_0)_s&=\frac{2(A^2-\abs{t}^2)}{D^2},&
				(y_0)_{t_\alpha}&=\frac{4At_\alpha}{D^2},\\
				(y_\beta)_s&=-\frac{4At_\beta}{D^2},&
				(y_\beta)_{t_\alpha}
				&=\frac{2\delta_{\alpha\beta}}D-\frac{4t_\alpha t_\beta}{D^2}.
			\end{aligned}
		\end{equation}
		The scalar conjugation in \eqref{eq4.4} contributes $2ay\cdot\nabla_y$. By the product rule, the additional matrix coefficient of $\partial_{y_j}$ is
		\begin{equation}\label{eq4.19}
			\begin{aligned}
				C_j=D\biggl[&-2sR^{-1}R_s(y_j)_s
				-2s\sum_{\alpha=1}^nR^{-1}R_{t_\alpha}(y_j)_{t_\alpha}\\
				&+\sum_{\alpha=1}^nR^{-1}J_\alpha R(y_j)_{t_\alpha}\biggr].
			\end{aligned}
		\end{equation}
		Insert \eqref{eq4.17} and \eqref{eq4.18} and use
		\[
		T^2=-\abs{t}^2\Id,\qquad
		TJ_\alpha+J_\alpha T=-2t_\alpha\Id.
		\]
		The contractions in \eqref{eq4.19} satisfy
		\[
		\sum_{\alpha=1}^nt_\alpha R^{-1}R_{t_\alpha}=\frac{AT}{D},
		\qquad
		\sum_{\alpha=1}^nt_\alpha R^{-1}J_\alpha R=T.
		\]
		Consequently,
		\[
		C_0=\frac{4T}{D},\qquad
		C_\beta=\frac{-2(s^2+\abs{t}^2-1)J_\beta
			+4(J_\beta T+t_\beta\Id)}{D}.
		\]
		Since $J_\beta T+t_\beta\Id=\sum_{\alpha\ne\beta}t_\alpha J_\beta J_\alpha$, the resulting coefficients are
		\[
		C_0=2\sum_{\alpha=1}^ny_\alpha J_\alpha,
		\qquad
		C_\beta=-2y_0J_\beta+
		2\sum_{\alpha\ne\beta}y_\alpha J_\beta J_\alpha.
		\]
		They are precisely the coefficients of $2\Gamma$ in \eqref{eq4.9}. Finally, \eqref{eq4.2} gives the principal part $-h\Delta_y$. Together with \eqref{eq4.16}, these computations prove \eqref{eq4.15}.
	\end{proof}
	
	For $V$-valued coefficients on $\HH$, define
	\[
	E_a^{(1)}(F)=\cB_a(F)+
	\int_{\HH}s^{a-1}\ip{F}{\cD F}\dd s\dd t.
	\]
	This form is defined first on the compactly supported smooth core and then extended to $\cY_a(V)$. When $a=b+1$, this is the form $\cE_1$ in \eqref{eq3.8}. Applying the calculation in Lemma~\ref{Lem3.2} with one tensor factor gives, for every $a>1$,
	\begin{equation}\label{eq4.20}
		\left(1-\frac1a\right)\cB_a(F)
		\leq E_a^{(1)}(F)
		\leq\left(1+\frac1a\right)\cB_a(F).
	\end{equation}
	
	\begin{proposition}\label{Prop4.7}
		For $a>1$, the map $g\mapsto M_a(g\circ\Psi)$ extends to an isomorphism from $\cW_a(V)$ onto $\cY_a(V)$, with
		\begin{equation}\label{eq4.21}
			\begin{aligned}
				\cN_a(F)&=A_a\norm{g}_{L^2(\nu_a)}^2,\\
				E_a^{(1)}(F)&=A_a\left[
				\mu_a\norm{g}_{L^2(\nu_a)}^2+Q_a(g,g)\right].
			\end{aligned}
		\end{equation}
		It sends the weak equation $\cO_aF=\mu D^{-1}F$ to
		\begin{equation}\label{eq4.22}
			Q_a(g,k)=(\mu-\mu_a)(g,k)_{L^2(\nu_a)}
			\quad\text{for every }k\in\cW_a(V).
		\end{equation}
	\end{proposition}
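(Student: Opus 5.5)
The plan follows the scalar case, Lemma~\ref{Lem4.1}, with the matrix weight $M_a$ replacing $w_a$. First fix $g\in C_c^\infty(\BB;V)$ and put $F=M_a(g\circ\Psi)$, which is compactly supported and smooth in $\HH$.

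For the mass, use \eqref{eq4.14}: $\abs{F}^2=w_a^2\abs{g\circ\Psi}^2$. The change of variables with the Jacobian \eqref{eq4.2} and $s=hD/4$ then gives exactly the computation of Lemma~\ref{Lem4.1}, so $\cN_a(F)=A_a\norm g_{L^2(\nu_a)}^2$.

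For the energy, integrate by parts in $s$ and $t$ on the compactly supported core. Since $O_a=-s^{1-a}\operatorname{div}(s^a\nabla)$ and $\cD$ is symmetric for $\dd t$,
\[
E_a^{(1)}(F)=\int_{\HH}s^{a-1}\ip{F}{\cO_aF}\dd s\dd t .
\]
Insert \eqref{eq4.15}. Because $M_a^*M_a=w_a^2\Id$, the integrand becomes $s^{a-1}D^{-1}w_a^2\ip{g}{(\mu_a+\cL_a+2\Gamma)g}\circ\Psi$. The same change of variables turns this into
\[
A_a\int_{\BB}\ip{g}{(\mu_a+\cL_a+2\Gamma)g}\dd\nu_a .
\]
Since $\cL_a=-h^{1-a}\operatorname{div}(h^a\nabla)$, the $\cL_a$ term integrates by parts to $B_a(g)$, and the $\Gamma$ term is $2\gamma_a(g,g)$ by definition. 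This proves \eqref{eq4.21} on the cores. Polarizing gives the bilinear identities
\[
\cN_a(F,F')=A_a(g,k)_{L^2(\nu_a)},\qquad E_a^{(1)}(F,F')=A_a\bigl[\mu_a(g,k)+Q_a(g,k)\bigr].
\]
The map $g\mapsto M_a(g\circ\Psi)$ is a bijection between the cores $C_c^\infty(\BB;V)$ and $C_c^\infty(\HH;V)$, since $M_a$ is smooth and invertible and $\Psi$ is a diffeomorphism.

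Next, compare norms, which I expect to be the only delicate point. By \eqref{eq4.20}, $\cB_a(F)$ is comparable to $E_a^{(1)}(F)$ because $a>1$. By \eqref{eq4.13} and the upper bound in \eqref{eq4.12}, $\mu_a\norm g^2+Q_a(g,g)$ is comparable to $\norm g_{L^2(\nu_a)}^2+B_a(g)$. The lower side uses $\mu_a>0$ and $Q_a\geq(1-1/a)B_a$; the upper side uses $Q_a\leq(1+1/a)B_a+2(N-2)\norm g^2$. Together with \eqref{eq4.21}, this makes the map bounded above and below from the $\cW_a(V)$ norm to the $\cB_a^{1/2}$ norm on the cores. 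It therefore extends to an isomorphism of the completions, and the identities \eqref{eq4.21} pass to the limit by continuity. The continuity of $Q_a$ is Lemma~\ref{Lem4.5}, and the continuity of $E_a^{(1)}$ and $\cN_a$ is established earlier. For the extended map to be given pointwise by $M_a(g\circ\Psi)$, I will invoke the injective realizations of both completions as locally $L^2$ spaces, noted after Lemma~\ref{Lem3.3} and before Lemma~\ref{Lem4.1}. Local convergence identifies the limit on compact subsets.

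Finally, consider the weak equation $\cO_aF=\mu D^{-1}F$. On $\cY_a(V)$ it means $E_a^{(1)}(F,F')=\mu\cN_a(F,F')$ for all $F'$ in a dense set; by continuity this extends to all $F'\in\cY_a(V)$. Write $F'=M_a(k\circ\Psi)$ and apply the polarized identities. Dividing by $A_a$ gives
\[
\mu_a(g,k)+Q_a(g,k)=\mu(g,k),
\]
which is \eqref{eq4.22} for $k$ in the core. Density of the core in $\cW_a(V)$ extends it to every $k\in\cW_a(V)$. The converse follows in the same way, since the map is onto.
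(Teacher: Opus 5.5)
Your proposal is correct and follows the paper's own argument. The paper also pairs \eqref{eq4.15} with $F$ on the core, uses $M_a^*M_a=w_a^2\Id$ with the scalar Jacobian calculation, extends the core bijection via the norm equivalence from \eqref{eq4.12}, \eqref{eq4.13} and \eqref{eq4.20}, and extends the polarized identity by continuity to obtain \eqref{eq4.22}. Your extra remark identifying the extended map pointwise through the local $L^2$ realizations is a harmless refinement that the paper leaves implicit.
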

	\begin{proof}
		For core functions, multiply identity~\eqref{eq4.15} from Lemma~\ref{Lem4.6} by $F$ and integrate against $s^{a-1}\dd s\dd t$. The left-hand side is $E_a^{(1)}(F)$. The mass transformation follows from \eqref{eq4.14} and the scalar Jacobian calculation. This gives \eqref{eq4.21}. Denote the polarized forms by the same symbols. For $H=M_a(k\circ\Psi)$ one obtains explicitly
		\[
		E_a^{(1)}(F,H)-\mu\cN_a(F,H)
		=A_a\bigl[Q_a(g,k)+(\mu_a-\mu)(g,k)_{L^2(\nu_a)}\bigr].
		\]
		
		By \eqref{eq4.12} and \eqref{eq4.13},
		\[
		\mu_a\norm{g}_2^2+
		\left(1-\frac1a\right)B_a(g)
		\leq \mu_a\norm{g}_2^2+Q_a(g,g)
		\leq \bigl(\mu_a+2(N-2)\bigr)\norm{g}_2^2
		+\left(1+\frac1a\right)B_a(g).
		\]
		Here the norms are in $L^2(\nu_a)$. Since $\mu_a>0$ and $1-1/a>0$, these bounds and \eqref{eq4.20} give equivalent completed form norms. The bijection of compactly supported smooth cores therefore extends to an isomorphism. The polarized identity, first applied to core tests and then extended by continuity, gives \eqref{eq4.22} on the full form domain.
	\end{proof}
	
	\begin{proof}[Proof of Theorem~\ref{Thm1.2}]
		Let $v$ satisfy \eqref{eq1.9}. Decompose it as in \eqref{eq3.2}. Proposition~\ref{Prop3.4} gives $v_\ell=0$ for $\ell\geq2$.
		
		For degree zero, write $v_0(x,t)=F(s,t)$. Equations \eqref{eq3.7} and \eqref{eq3.10} give
		\[
		O_bF=\mu_\star D^{-1}F,\qquad F\in\cY_b.
		\]
		Use $F=w_b(g\circ\Psi)$ in Lemma~\ref{Lem4.1}. Since $w_b=U$ and $\lambda_b=\kappa$, the weak equation becomes
		\[
		\cL_bg=(\mu_\star-\kappa)g=2bg.
		\]
		By \eqref{eq4.7}, $g$ is linear in $y$. Pulling back \eqref{eq4.1} yields
		\begin{equation}\label{eq4.23}
			v_0=D^{-(Q+2)/4}
			\left[c_0(1-\rho^4)+2\ip{\tau}{t}\right]
		\end{equation}
		after changing the sign of the coefficient of $y_0$.
		
		For degree one, write $v_1=\ip{x}{F(s,t)}$. Here $a=b+1>1$, so
		\[
		\cO_aF=\mu_\star D^{-1}F,\qquad F\in\cY_a(V).
		\]
		Apply Proposition~\ref{Prop4.7}. Since
		\[
		\mu_a=(a-1)(a+n)=b(b+n+1)=\mu_\star,
		\]
		the transformed equation is $Q_a(g,k)=0$ for every $k\in\cW_a(V)$. Lemma~\ref{Lem4.5} shows that $g=c$ is a constant vector. Consequently,
		\begin{equation}\label{eq4.24}
			\begin{aligned}
				v_1&=D^{-(Q+2)/4}\ip{x}{((1+s)\Id+J_t)c}\\
				&=D^{-(Q+2)/4}\ip{c}{(1+s)x-J_tx}.
			\end{aligned}
		\end{equation}
		Equations \eqref{eq4.23} and \eqref{eq4.24} prove the asserted classification.
		
		For the converse, differentiation of \eqref{eq1.6} at $(1,e)$ gives
		\begin{equation}\label{eq4.25}
			\begin{aligned}
				\left.\partial_\lambda U_{\lambda,e}\right|_{\lambda=1}
				&=\frac{Q-2}{2}D^{-(Q+2)/4}(1-\rho^4),\\
				\left.\frac{\dd}{\dd\varepsilon}U_{1,(0,\varepsilon\tau)}
				\right|_{\varepsilon=0}
				&=\frac{Q-2}{2}D^{-(Q+2)/4}\ip{\tau}{t},\\
				\left.\frac{\dd}{\dd\varepsilon}U_{1,(\varepsilon c,0)}
				\right|_{\varepsilon=0}
				&=\frac{Q-2}{4}D^{-(Q+2)/4}
				\ip{c}{(1+s)x-J_tx}.
			\end{aligned}
		\end{equation}
		Differentiating \eqref{eq1.4} along these parameters proves \eqref{eq1.9}. The functions are smooth on $G$, decay at least as $\rho^{2-Q}$, and their horizontal gradients are $O(\rho^{1-Q})$ at infinity. Homogeneous polar coordinates show that these gradients are square integrable at infinity. Multiplication by a smooth cutoff on $B_{2R}$ equal to one on $B_R$ gives an additional gradient term with squared norm $O(R^{2-Q})$, which tends to zero. Thus these functions belong to the homogeneous completion $\dS(G)$.
		
		The numerators in \eqref{eq1.10} are linearly independent. Their constant terms determine $c_0$, their horizontal linear terms determine $c$, and their central linear terms determine $\tau$. Hence the kernel has dimension $m+n+1$.
	\end{proof}
	
	The same conclusion can be expressed as a finite-energy Liouville theorem for an operator with polynomial coefficients. Define
	\[
	\cT P=-D\Delta_GP+\frac{Q+2}{2}\ip{\nabla_GD}{\nabla_GP}
	-\frac{Q+2}{4}\abs{x}^2P.
	\]
	\begin{corollary}\label{Cor4.8}
		If $\cT P=0$ in distributions and $D^{-(Q+2)/4}P\in\dS(G)$, then
		\[
		P=c_0(1-\rho^4)+2\ip{\tau}{t}+\ip{c}{(1+s)x-J_tx}
		\]
		for some $c_0\in\R$, $c\in V$ and $\tau\in Z$. In particular, $P$ is a polynomial of weighted degree at most four, with $\deg x_i=1$ and $\deg t_\alpha=2$.
	\end{corollary}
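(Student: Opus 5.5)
The plan is to reduce Corollary~\ref{Cor4.8} to Theorem~\ref{Thm1.2} by a conjugation identity. Put $v=D^{-(Q+2)/4}P$. I will show, for smooth $P$, that
\[
-\Delta_G\bigl(D^{-(Q+2)/4}P\bigr)-\mu_\star D^{-1}D^{-(Q+2)/4}P
=D^{-(Q+2)/4-1}\,\cT P .
\]
It follows that $\cT P=0$ in distributions if and only if $v$ solves \eqref{eq1.9}. The hypothesis $v\in\dS(G)$ then lets Theorem~\ref{Thm1.2} give the formula for $v$, and multiplying by $D^{(Q+2)/4}$ gives the formula for $P$.

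\textbf{Step 1 (the identity).} Write $\beta=(Q+2)/4$. By the product rule,
\[
\Delta_G(D^{-\beta}P)=D^{-\beta}\Delta_GP-2\beta D^{-\beta-1}\ip{\nabla_GD}{\nabla_GP}+P\,\Delta_G(D^{-\beta}).
\]
Then use \eqref{eq2.2} to compute
\[
\Delta_G D^{-\beta}=-\beta D^{-\beta-1}\Delta_GD+\beta(\beta+1)D^{-\beta-2}\abs{\nabla_GD}^2
=D^{-\beta-1}\Bigl[-\beta m-\beta\tfrac{Q+2}{4}\abs{x}^2+\beta(\beta+1)\abs{x}^2\Bigr].
\]
Since $\beta+1-\tfrac{Q+2}{4}=1$, the bracket equals $-\beta m+\beta\abs{x}^2$, and $\beta m=\mu_\star$. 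Multiplying through by $-D^{\beta+1}$ gives
\[
-D\Delta_GP+2\beta\ip{\nabla_GD}{\nabla_GP}-\beta\abs{x}^2P+\mu_\star P=D^{\beta+1}\bigl(-\Delta_G v-\mu_\star D^{-1}v\bigr).
\]
Since $2\beta=\tfrac{Q+2}{2}$, this is exactly $\cT P+\mu_\star P$ on the left. So the relation between $\cT$ and the operator in \eqref{eq1.9} carries a zeroth-order shift, and I will have to check the bookkeeping carefully. If a discrepancy remains, I will recompute with the sign convention as printed. Note that the kernel polynomials in \eqref{eq1.10} must satisfy $\cT P=0$, which fixes the correct constant. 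Checking $P=1-\rho^4$ directly against \eqref{eq4.25} is a quick consistency test. The corrected version of the identity is the one I will then use.

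\textbf{Step 2 (distributional transfer).} $D^{\pm\beta}$ is smooth and positive, and multiplication by a smooth function commutes with the distributional product rule. The identity therefore holds in $\cD'(G)$ for $P\in L^1_{\mathrm{loc}}$. Here $P=D^{\beta}v$ is locally $L^{2^*}$ because $v\in\dS(G)\subset L^{2^*}$. Hence $v$ is a distributional solution of \eqref{eq1.9} lying in $\dS(G)$, which is the setting of Theorem~\ref{Thm1.2}.

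\textbf{Step 3 (conclusion).} Theorem~\ref{Thm1.2} gives $v$ in the form \eqref{eq1.10}, and therefore $P$ has the stated form. For the degree claim, $\rho^4=s^2+\abs t^2$ has weighted degree four, $\ip{\tau}{t}$ has degree two, $(1+s)x$ has degree at most three, and $J_tx$ has degree three.

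The main obstacle is Step 1: getting the exact constants, in particular the zeroth-order term, to match the definition of $\cT$. Step 2 is routine.
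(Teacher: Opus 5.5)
Your route is the paper's: conjugate by $D^{-\beta}$ with $\beta=(Q+2)/4$, use \eqref{eq2.2} to obtain $(-\Delta_G-\mu_\star D^{-1})(D^{-\beta}P)=D^{-\beta-1}\cT P$, pass to distributions using that $D$ is smooth and positive, and apply Theorem~\ref{Thm1.2}.

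The one flaw is a bookkeeping slip at the end of Step~1. Your final display actually computes $-D\Delta_GP+2\beta\ip{\nabla_GD}{\nabla_GP}+(\mu_\star-\beta\abs{x}^2)P=D^{\beta+1}(-\Delta_Gv)$, not $D^{\beta+1}(-\Delta_Gv-\mu_\star D^{-1}v)$. Subtracting $D^{\beta+1}\mu_\star D^{-1}v=\mu_\star P$ cancels the $+\mu_\star P$ exactly. So there is no zeroth-order shift, and the identity you stated at the outset is correct as written. The hedging about recomputing with another sign convention or testing against \eqref{eq4.25} should be removed.
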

	
	\begin{proof}[Proof of Corollary~\ref{Cor4.8}]
		Put $\beta=(Q+2)/4$. The product rule and \eqref{eq2.2} give
		\[
		\bigl(-\Delta_G-\mu_\star D^{-1}\bigr)(D^{-\beta}P)
		=D^{-\beta-1}\cT P.
		\]
		Since $D>0$, all coefficients and multipliers are smooth, and the identity holds in distributions. Apply Theorem~\ref{Thm1.2} to $D^{-\beta}P$ and multiply by $D^\beta$. The stated polynomial formula follows.
	\end{proof}
	
	\section{Spectral consequences and global stability}\label{Sec5}
	We use the notation
	\[
	L_U=-\Delta_G-\mu_\star D^{-1},\qquad
	\cZ=\ker_{\dS(G)}L_U,
	\]
	and use $\perp_{\dS}$ for orthogonality in the energy inner product $\cE(\cdot,\cdot)$.
	
	\begin{proposition}\label{Prop5.1}
		The weighted problem \eqref{eq1.8} has discrete positive spectrum. Its first eigenvalue is $\kappa$, simple with eigenfunction $U$. Its second distinct eigenvalue is $\mu_\star$, with eigenspace $\cZ$ and multiplicity $m+n+1$. The quadratic form of $L_U$ has Morse index one.
	\end{proposition}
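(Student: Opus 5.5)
The plan is to assemble Proposition~\ref{Prop5.1} from Lemma~\ref{Lem2.1}, the degree decomposition \eqref{eq3.2}, and Theorem~\ref{Thm1.2}.

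\emph{Discreteness and positivity.} Lemma~\ref{Lem2.1} shows that $\dS(G)\hookrightarrow L^2(G,D^{-1}\dd g)$ is compact. It also shows $\cN(u)\le\kappa^{-1}\cE(u)$, so the embedding is injective and has dense range, since $C_c^\infty$ is dense in the weighted space. The operator $T$ on $\dS(G)$ defined by $\cE(Tu,v)=\cN(u,v)$ is therefore compact, self-adjoint and positive. By the spectral theorem, the eigenvalues $\mu$ of \eqref{eq1.8} are the reciprocals of the nonzero eigenvalues of $T$. They form a discrete set in $(0,\infty)$ accumulating only at $\infty$, with finite multiplicities and a min-max characterization.

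\emph{First eigenvalue.} By \eqref{eq2.1}, $\cE(u)\ge\kappa\cN(u)$, with equality exactly for $u\in\Span\{U\}$. Hence the first eigenvalue is $\kappa$. Its eigenspace consists of the functions attaining equality in the Rayleigh quotient, so it equals $\Span\{U\}$ and the eigenvalue is simple.

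\emph{Second eigenvalue and its eigenspace.} Let $\mu$ be an eigenvalue with $\kappa<\mu\le\mu_\star$ and let $u$ be an eigenfunction. Proposition~\ref{Prop3.4} gives $u_\ell=0$ for every $\ell\ge2$, so only degrees $0$ and $1$ remain, and each satisfies the equation separately by \eqref{eq3.3}.
\begin{itemize}
\item In degree zero, Lemma~\ref{Lem4.1} transforms the equation into $\cL_bg=(\mu-\kappa)g$ on $\cW_b$. By Lemma~\ref{Lem4.2}, the smallest nonzero eigenvalue of $\cL_b$ is $\lambda^{(b)}_{1,0}=2b=\mu_\star-\kappa$; for $j\ge1$ the eigenvalues exceed $2b$. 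Thus $\mu\in(\kappa,\mu_\star)$ is impossible in this degree.
\item In degree one, Proposition~\ref{Prop4.7} with $a=b+1$ gives $Q_a(g,g)=(\mu-\mu_a)\norm g^2$ with $\mu_a=\mu_\star$. Since $Q_a\ge0$ by \eqref{eq4.13}, any $\mu<\mu_\star$ forces $g=0$.
\end{itemize}
Consequently no eigenvalue lies in $(\kappa,\mu_\star)$. Theorem~\ref{Thm1.2} shows that $\mu_\star$ is an eigenvalue with eigenspace exactly $\cZ$, of dimension $m+n+1$. This agrees with Yang's result quoted in the introduction.

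\emph{Morse index.} The quadratic form of $L_U$ is $\cE(v)-\mu_\star\cN(v)$. Expand $v$ in the $\cN$-orthonormal eigenbasis; the energy form is diagonal in this basis. The form is then negative only along $U$, since $\kappa<\mu_\star$, vanishes on $\cZ$, and is nonnegative on all remaining eigendirections, where $\mu\ge\mu_\star$. Therefore the maximal dimension of a subspace on which the form is negative definite is one.

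The main difficulty was already handled earlier: excluding eigenvalues in $(\kappa,\mu_\star)$ within degrees $0$ and $1$ via the ball conjugations. What remains here is bookkeeping, namely checking that the degree-wise arguments used in the proof of Theorem~\ref{Thm1.2} apply with $\mu\le\mu_\star$ in place of $\mu=\mu_\star$.
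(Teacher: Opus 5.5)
Your proposal is correct and follows essentially the same route as the paper. It uses the compact operator defined by $\cE(Ku,v)=\cN(u,v)$ for discreteness, the ground-state identity \eqref{eq2.1} for the simple eigenvalue $\kappa$, and Proposition~\ref{Prop3.4} together with Lemmas~\ref{Lem4.1}--\ref{Lem4.2} (degree zero) and Proposition~\ref{Prop4.7} with \eqref{eq4.13} (degree one) to exclude eigenvalues in $(\kappa,\mu_\star)$. It then takes the eigenspace from Theorem~\ref{Thm1.2} and reads off the Morse index by diagonalizing $\Id-\mu_\star K$.

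One small slip: injectivity of the embedding, and hence of $K$, follows from $D^{-1}>0$ and the realization of $\dS(G)$ inside $L^{2^*}(G)$. It does not follow from the bound $\cN\le\kappa^{-1}\cE$, which only gives continuity.
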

	\begin{proof}
		By Lemma~\ref{Lem2.1}, the bounded operator $K$ on $\dS(G)$ defined by
		\[
		\cE(Ku,v)=\cN(u,v)
		\]
		is compact, self-adjoint and positive. It is injective because $D^{-1}>0$. The compact spectral theorem gives an eigenbasis of $K$ that is orthonormal with respect to $\cE$. Its eigenvalues are positive and tend to zero. Their reciprocals are the eigenvalues of \eqref{eq1.8}, with the same eigenspaces.
		
		The ground-state identity gives the simple first eigenvalue $\kappa$. Let $v$ be an eigenfunction with eigenvalue at most $\mu_\star$. Proposition~\ref{Prop3.4} reduces $v$ to horizontal degrees zero and one. In degree zero, Lemmas~\ref{Lem4.1} and \ref{Lem4.2} give the first two values $\lambda_b=\kappa$ and $\lambda_b+2b=\mu_\star$. In degree one, Proposition~\ref{Prop4.7} and \eqref{eq4.13} give the lower bound $\mu_a=\mu_\star$. Thus the spectrum up to $\mu_\star$ consists of $\kappa$ and $\mu_\star$. Theorem~\ref{Thm1.2} gives the multiplicity of $\mu_\star$.
		
		The energy Riesz representative of $L_U$ is $\Id-\mu_\star K$. On a weighted eigenvector of eigenvalue $\mu$ it acts by $1-\mu_\star/\mu$. Exactly one such value is negative, corresponding to $\mu=\kappa$.
	\end{proof}
	
	\subsection{Spectral bounds beyond the symmetry modes}
	
	The subspaces of horizontal harmonic degrees zero and one admit a complete spectral description. In the following formulas, $k$ denotes harmonic degree in the auxiliary variable $y\in\R^{n+1}$, whereas the horizontal degree in $x$ is fixed at zero or one. We write $\mathscr H_k(\R^{n+1})$ for the space of homogeneous harmonic polynomials of degree $k$. If different pairs or branches give the same numerical eigenvalue, their multiplicities are added.
	
	\begin{proposition}\label{Prop5.2}
		In horizontal degree zero, the eigenvalues of \eqref{eq1.8} are
		\begin{equation}\label{eq5.1}
			\mu^{(0)}_{k,j}
			=\kappa+2bk+4j\left(j+k+b+\frac{n-1}{2}\right),
			\qquad k,j\geq0,
		\end{equation}
		with multiplicity
		\[
		\dim\mathscr H_k(\R^{n+1})
		=\binom{k+n}{n}-\binom{k+n-2}{n}.
		\]
		A binomial coefficient with upper entry smaller than its nonnegative lower entry is taken to be zero. In horizontal degree one the two eigenvalue branches are
		\begin{align}
			\mu^{(1,-)}_{k,j}
			&=\mu_\star+2bk
			+4j\left(j+k+b+\frac{n+1}{2}\right),
			&&k,j\geq0,\label{eq5.2}\\
			\mu^{(1,+)}_{k,j}
			&=\mu_\star+2(b+2)k+2(n-1)
			+4j\left(j+k+b+\frac{n+1}{2}\right),
			&&k\geq1,\ j\geq0,\label{eq5.3}
		\end{align}
		with multiplicities $m\binom{k+n-1}{n-1}$ and $m\binom{k+n-2}{n-1}$, respectively. In each of these two horizontal sectors, the least eigenvalue strictly larger than $\mu_\star$ is $\mu_\star+m$.
	\end{proposition}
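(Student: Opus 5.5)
Degree zero reduces to the scalar ball operator and degree one to the Clifford-twisted ball operator, so the plan is to diagonalize each and read off the spectrum.

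\emph{Degree zero.} Take $a=b$, so $N=n+1$. By Lemma~\ref{Lem4.1}, $F=w_b(g\circ\Psi)$ is an isomorphism $\cW_b\to\cY_b$. It satisfies $\cN_b(F)=A_b\norm g^2_{L^2(\nu_b)}$ and $\cB_b(F)=A_b[\lambda_b\norm g^2+B_b(g)]$, where $\lambda_b=\kappa$. By \eqref{eq3.8}, the weak degree-zero problem is therefore equivalent to the eigenproblem of the self-adjoint operator of $B_b$ shifted by $\kappa$. Lemma~\ref{Lem4.2} gives a complete eigenbasis with eigenvalues $\lambda^{(b)}_{k,j}$. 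Adding $\kappa$ and substituting $N/2-1=(n-1)/2$ gives \eqref{eq5.1}. For fixed $(k,j)$ the multiplicity is the number of $H_k$, namely $\dim\mathscr H_k(\R^{n+1})=\binom{k+n}{n}-\binom{k+n-2}{n}$. Completeness of the basis rules out further eigenvalues.

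\emph{Degree one.} Take $a=b+1>1$ and use Proposition~\ref{Prop4.7}. The equation becomes $Q_a(g,k)=(\mu-\mu_\star)(g,k)$ with $\mu_a=\mu_\star$. We need a complete eigenbasis for the form $Q_a=B_a+2\gamma_a$ on $\cW_a(V)$.
\begin{enumerate}
\item Split $V$-valued functions by Euclidean harmonic degree $k$ in $y$, with further splitting by the $\Gamma$-eigenspaces of Lemma~\ref{Lem4.4}. The operator $\Gamma$ commutes with rotations and with radial multipliers, and $\cL_a$ preserves each sector.
\item On a $\Gamma$-eigenspace with eigenvalue $\gamma\in\{-k,\,k+N-2\}$, the operator is $\cL_a+2\gamma$. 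Its eigenfunctions are $H_k(y)P_j^{(a-1,k+N/2-1)}(2|y|^2-1)$ with eigenvalue $\lambda^{(a)}_{k,j}+2\gamma$.
\item For $\gamma=-k$ this gives $2ak-2k+4j(j+k+a+N/2-1)=2bk+4j(j+k+b+(n+1)/2)$, using $a-1=b$ and $a+N/2-1=b+(n+1)/2$. That is \eqref{eq5.2}. It includes $k=0$, where $\Gamma=0$ and the multiplicity is $m$.
\item For $\gamma=k+n-1$ it gives $2ak+2k+2(n-1)+4j(\dots)=2(b+2)k+2(n-1)+4j(\dots)$. That is \eqref{eq5.3}.
\item Multiplicities come from \eqref{eq4.10}.
\end{enumerate}

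Completeness works as in Lemma~\ref{Lem4.2}. These vector polynomials span all $V$-valued polynomials, which are dense in $L^2(\nu_a;V)$. The cutoff argument puts them in $\cW_a(V)$. Their weak equations extend to all of $\cW_a(V)$ by continuity of $\gamma_a$ (Lemma~\ref{Lem4.5}). A weak eigenfunction is then determined by its coefficients in this orthonormal basis. The form is coercive by \eqref{eq4.13}, so the associated operator is self-adjoint with compact resolvent.

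\emph{Least eigenvalue above $\mu_\star$.}
\begin{itemize}
\item Degree zero. $\mu^{(0)}_{1,0}=\kappa+2b=\mu_\star$. The next candidates are $\mu^{(0)}_{2,0}=\kappa+4b=\mu_\star+m$ and $\mu^{(0)}_{0,1}=\kappa+4(1+b+(n-1)/2)=\kappa+m+2n+2$. Since $\kappa+m+2n+2>\mu_\star=\kappa+m$, we must compare it with $\kappa+2m$: when $m\le 2n+2$ the smaller value is $\kappa+2m=\mu_\star+m$. I expect H-type constraints (for instance $m\ge 2^{\lfloor\cdot\rfloor}$-type lower bounds from Clifford modules) to be needed for the case $m>2n+2$; the intended claim must use $\kappa+2m$ versus $\kappa+m+2n+2$. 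I will check this carefully.
\item Degree one. $\mu^{(1,-)}_{0,0}=\mu_\star$ and $\mu^{(1,-)}_{1,0}=\mu_\star+2b=\mu_\star+m$. For $j\ge1$ the values are at least $\mu_\star+4(b+(n+3)/2)>\mu_\star+m$. The plus branch has minimum $\mu_\star+2b+4+2(n-1)>\mu_\star+m$.
\end{itemize}

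The main obstacle is the degree-zero comparison of $\mu^{(0)}_{2,0}$ with $\mu^{(0)}_{0,1}$, together with completeness in degree one.
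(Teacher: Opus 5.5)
Your reduction and diagonalization follow the paper's proof essentially step by step. In degree zero you use Lemma~\ref{Lem4.1} with $a=b$ and the Jacobi basis of Lemma~\ref{Lem4.2}. In degree one you use Proposition~\ref{Prop4.7} with $a=b+1$ and the $\Gamma$-eigenspaces of Lemma~\ref{Lem4.4} times the Jacobi factor, and completeness comes from density of polynomials together with the form equivalence of Lemma~\ref{Lem4.5}. Your two degree-one branches and your gap check in degree one are correct.

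\textbf{The gap.} The final assertion in degree zero is left unproved, because of an arithmetic slip. You wrote $\mu^{(0)}_{0,1}=\kappa+4(1+b+(n-1)/2)=\kappa+m+2n+2$, but $4b=2m$, so
\[
\mu^{(0)}_{0,1}=\kappa+4b+2n+2=\kappa+2m+2n+2=\mu_\star+m+2n+2 .
\]
More generally, every $j\geq1$ gives
\[
\mu^{(0)}_{k,j}\geq\kappa+4b+2n+2>\kappa+4b=\mu^{(0)}_{2,0},
\]
and $k\geq3$, $j=0$ gives $\kappa+2bk\geq\kappa+6b$. Hence the least degree-zero eigenvalue above $\mu_\star=\mu^{(0)}_{1,0}$ is $\mu^{(0)}_{2,0}=\mu_\star+m$, with no case distinction. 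This is exactly the paper's remark that every $j\geq1$ gives a larger value.

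The remedy you suggest could not have worked. Clifford-module constraints bound $m$ from below, not from above. The Heisenberg groups ($n=1$, $m$ any even number) satisfy $m>2n+2$ as soon as $m\geq6$. So if your value of $\mu^{(0)}_{0,1}$ were right, the proposition would actually be false there. The fix is just the corrected arithmetic, the same computation you did correctly in degree one, where you obtained $\mu_\star+4(b+(n+3)/2)$.

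\textbf{A minor imprecision.} $\Gamma$ does not commute with the scalar rotation action $g\mapsto g\circ R$, because its matrix coefficients are not invariant under the adjoint action. What you actually use, and what holds, is that $\Gamma$ is tangential, annihilates radial functions, and preserves each harmonic degree.
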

	\begin{proof}
		For horizontal degree zero, Lemma~\ref{Lem4.1} takes the weighted eigenvalue problem to $\cL_bg=(\mu-\kappa)g$. The complete form expansion in Lemma~\ref{Lem4.2} gives \eqref{eq5.1}. The harmonic dimension follows by subtracting the dimension of degree-$(k-2)$ polynomials from that of degree-$k$ polynomials.
		
		For horizontal degree one, put $a=b+1$ in Proposition~\ref{Prop4.7}. The problem becomes
		\[
		(\cL_a+2\Gamma)g=(\mu-\mu_\star)g.
		\]
		In each $V$-valued harmonic space, choose the orthogonal eigenbasis of $\Gamma$ from Lemma~\ref{Lem4.4}. The operator $\Gamma$ annihilates radial scalar functions. Multiplication by the Jacobi factor in \eqref{eq4.5} therefore diagonalizes $\cL_a$ and $\Gamma$ simultaneously. Adding $2(-k)$ or $2(k+n-1)$ to \eqref{eq4.6} gives \eqref{eq5.2}-\eqref{eq5.3}, with multiplicities \eqref{eq4.10}. The products span the vector-valued polynomials, a dense subspace of $L^2(\nu_a;V)$. By Lemma~\ref{Lem4.5}, the quadratic form $Q_a(g,g)+\norm{g}_{L^2(\nu_a)}^2$ is equivalent to $\norm{g}_{\cW_a}^2$. The associated operator consequently has this complete eigenbasis.
		
		In degree zero, $(k,j)=(0,0)$ and $(1,0)$ give $\kappa$ and $\mu_\star$. The next value, at $(2,0)$, is $\kappa+4b=\mu_\star+m$. Every $j\geq1$ gives a larger value. In degree one, $(0,0)$ in the minus branch gives $\mu_\star$, and $(1,0)$ gives $\mu_\star+2b$. The plus branch starts at $\mu_\star+2b+2n+2$. Every positive radial index also gives a larger value.
	\end{proof}
	
	We next strengthen the bound supplied by \eqref{eq3.12}, with particular attention to horizontal degree two. Recall the coefficient forms $\cE_\ell$ and $\cN_a$ from Section~\ref{Sec3}.
	
	\begin{lemma}\label{Lem5.3}
		Let $\ell\geq1$, $a=b+\ell$ and $L=a+n-1$. For every $F\in\cY_a(\Sym_0^\ell V)$,
		\begin{equation}\label{eq5.4}
			\cE_\ell(F)-bL\cN_a(F)
			\geq\delta_\ell\cN_a(F),
			\qquad
			\delta_\ell=\frac{2b\ell L}{2b+\ell(3L+4)}.
		\end{equation}
		In particular, for $\ell=2$ the right-hand coefficient is
		\begin{equation}\label{eq5.5}
			\delta_2=\frac{2b(b+n+1)}{4b+3n+7}
			=\frac{2\mu_\star}{2m+3n+7}.
		\end{equation}
	\end{lemma}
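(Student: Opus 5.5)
The plan is to sharpen the identity \eqref{eq3.12} by not discarding either of its two nonnegative terms. Instead, each term is bounded from below by a multiple of $\cN_a(F)$, and the two bounds are then combined with an optimized weight. By Lemma~\ref{Lem3.2} and the remark after Lemma~\ref{Lem3.3}, all forms involved are continuous on $\cY_a(\Sym_0^\ell V)$. So it suffices to prove \eqref{eq5.4} for compactly supported smooth $F$ and extend by density. On the core I will write $F=w_aG$ with $G$ smooth and compactly supported.

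\emph{Step 1: rewrite the Clifford squares in terms of $G$.} Using $(w_a)_s=-LAD^{-1}w_a$ and $\partial_{t_\alpha}w_a=-Lt_\alpha D^{-1}w_a$ with $A=1+s$, one gets
\[
F_s-\cD_rF=w_a\bigl(G_s-\cD_rG\bigr)-Lw_aD^{-1}\bigl(A\Id-J_t^{(r)}\bigr)G .
\]
The Clifford relations \eqref{eq1.1} give $\bigl(A\Id-J_t^{(r)}\bigr)^*\bigl(A\Id-J_t^{(r)}\bigr)=D\Id$. Hence the second vector has squared norm $L^2w_a^2D^{-1}\abs{G}^2$. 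Expanding the square then produces three pieces: a term $\int s^aw_a^2\abs{G_s-\cD_rG}^2$, this pure mass term, and a cross term.

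\emph{Step 2: integrate the cross term by parts.} I will integrate by parts in $s$ and $t$, using the symmetry of $\cD_r$ from \eqref{eq3.6} and the anticommutation $TJ_\alpha+J_\alpha T=-2t_\alpha\Id$. The goal is to convert the cross term into integrals of $\abs{G}^2$ against explicit weights. The derivative $\partial_s s^a=as^{a-1}$ is the only source of the weight $s^{a-1}$ that appears in $\cN_a$. Since $\Delta_{s,t}w_a=\lambda_aD^{-1}w_a$ and $\mathrm{div}$ of the Clifford term is again computable from \eqref{eq4.17}-type formulas, I expect the result to be an exact identity of the form
\[
\int_{\HH}s^a\abs{F_s-\cD_rF}^2
=\int_{\HH}s^aw_a^2\abs{G_s-\cD_rG}^2+\alpha_1\cN_a(F)-\alpha_2\int_{\HH}s^{a}w_a^2D^{-1}\abs{G}^2 ,
\]
with explicit constants $\alpha_1,\alpha_2$ depending on $a$ and $L$. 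The first term on the right can be dropped. The last term has the wrong weight $s^a$. I will control it by the ground-state term $\frac ba\int s^aw_a^2\abs{\nabla_{s,t}G}^2$ in \eqref{eq3.12}, using a one-dimensional Hardy-type inequality in $s$. This inequality comes from integrating $\partial_s\bigl(s^aw_a^2\abs{G}^2\,\phi\bigr)$ for a suitable multiplier $\phi$ built from $A/D$, followed by Cauchy--Schwarz, as at the end of the proof of Lemma~\ref{Lem3.2}.

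\emph{Step 3: optimize the convex combination.} This produces two lower bounds for $\cE_\ell(F)-bL\cN_a(F)$. In one, the Clifford squares are kept and the gradient term is partly spent on the Hardy step. In the other, the Clifford squares are discarded. Taking a combination with weight $\theta\in[0,1]$ and a Young parameter in the Cauchy--Schwarz step, and optimizing both, should produce $\delta_\ell=2b\ell L/(2b+\ell(3L+4))$. The sum over the $\ell$ factors $r$ contributes the factor $\ell$, and the ratio $b/a$ contributes the $2b$ in the denominator. Substituting $\ell=2$, $a=b+2$, $L=b+n+1$ and $b=m/2$ then gives \eqref{eq5.5} by direct arithmetic, since $b(b+n+1)=\mu_\star$.

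The main obstacle is Step 2. The boundary and divergence terms from the Clifford part must cancel exactly enough to leave only the two weights $s^{a-1}D^{-1}$ and $s^aD^{-1}$, and the constants must land on $3L+4$. If a direct computation does not close, I would instead pass to the ball variables of Section~\ref{Sec4}, where both weights become $h^{a-1}$ and $h^a$. There the Hardy step is the elementary bound $\int h^{a}\abs{\nabla g}^2\geq c\int h^{a-1}\abs{g}^2$ for functions orthogonal to constants.
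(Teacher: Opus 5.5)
Your Step 2 fails, and it is the step that carries the lemma.

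Write $T=J_t^{(r)}$ and $\phi=s^aw_a^2D^{-1}=s^aD^{-L-1}$. Your cross term is $-2L\int_{\HH}\phi\,\ip{G_s-\cD_rG}{((1+s)\Id-T)G}$. Only $\ip{G_s}{(1+s)G}$ and the diagonal ($\alpha=\beta$) part of $\ip{\cD_rG}{TG}$ are derivatives of $\abs{G}^2$. The other pieces are $\ip{G_s}{TG}$, $(1+s)\ip{\cD_rG}{G}$ and the $J_\alpha J_\beta$ terms with $\alpha\neq\beta$. Each pairs $\partial G$ with a skew matrix applied to $G$, so integrating by parts only reproduces it. These terms are the Clifford angular operator $\Gamma$ of Section~4 reappearing after scalar conjugation.

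Already for $n=\ell=1$ this first-order part is $2L\int\phi\,\ip{tG_s-(1+s)G_t}{JG}$. Take $G=\chi(\cos\theta\,e+\sin\theta\,Je)$ with $\abs{e}=1$ and $\chi=1$ on the support of $\theta$. Then it equals $-2La\int ts^{a-1}D^{-L-1}\theta$, which varies linearly with $\theta$ while every integral of $\abs{G}^2$ stays fixed. So no identity of your form exists for any $\alpha_1,\alpha_2$.

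Bounding these terms by Cauchy--Schwarz instead needs the square $\int s^aw_a^2\abs{G_s-\cD_rG}^2$ that you planned to drop. That square is not controlled by the ground-state term: the weight $s^aw_a^2$ depends on $t$, so the identity $\int\abs{\cD_rH}^2\dd t=\int\abs{\nabla_tH}^2\dd t$ of \eqref{eq3.6} is unavailable. The value $3L+4$ is then asserted, not derived. Your fallback is also misdescribed: under $\Psi$ the measure $s^aw_a^2D^{-1}\dd s\dd t$ becomes $A_ah^ae^{-1}\dd y$, which is singular at the boundary point $y=(1,0)$ representing infinity. It does not become $h^a\dd y$.

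The missing idea is to split before expanding anything. Let $F_0=w_aC$ be the $\cN_a$-orthogonal projection of $F$ onto the ground states, and $F_\perp=F-F_0$. The ground-state equation gives $\cB_a(F_0,F_\perp)=0$. Set $X^2=\cB_a(F_\perp)-aL\cN_a(F_\perp)$ and $S=\sum_r\int s^a\abs{F_s-\cD_rF}^2$; then \eqref{eq3.12} becomes $R_\ell=S/a+bX^2/a$.

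Your inequality for functions orthogonal to constants is the right tool, but only for $F_\perp$. The eigenvalue $2a$ of Lemma~\ref{Lem4.2} gives $\cN_a(F_\perp)\le X^2/(2a)\le R_\ell/(2b)$ and $\cB_a(F_\perp)\le\frac{L+2}{2}X^2$.

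The ground-state component is controlled by the Clifford squares, which are never expanded. Let $P_r=\partial_s-\cD_r$, with all norms in $L^2(s^a\dd s\dd t)$.
\begin{itemize}
\item Skew-adjointness and \eqref{eq1.1} give pointwise $\abs{P_rF_0}^2=\abs{\nabla_{s,t}w_a}^2\abs{C}^2$, so $\sum_r\norm{P_rF_0}^2=\ell aL\cN_a(F_0)$.
\item Since the weight $s^a$ is independent of $t$, \eqref{eq3.6} applies and gives $\sum_r\norm{P_rF_\perp}^2\le2\ell\cB_a(F_\perp)\le\ell(L+2)X^2$.
\item The triangle inequality gives $\sqrt{\ell aL\,\cN_a(F_0)}\le\sqrt S+\sqrt{\ell(L+2)}\,X$. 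Cauchy--Schwarz then gives $\cN_a(F_0)\le\bigl(\frac1{\ell L}+\frac{L+2}{bL}\bigr)R_\ell$.
\end{itemize}
Adding the bound for $\cN_a(F_\perp)$ produces exactly $\delta_\ell$, with $3L+4=L+2(L+2)$. Working with $F$ rather than $G=F/w_a$ is what keeps the first-order Clifford terms from ever appearing.
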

	\begin{proof}
		Write $F=F_0+F_\perp$, where $F_0=w_aC$ is the $\cN_a$-orthogonal projection onto the scalar ground-state subspace with constant tensors $C\in\Sym_0^\ell V$. This subspace belongs to $\cY_a$ by Lemma~\ref{Lem4.1} and the admissibility of constants on the ball. The ground-state equation implies
		\[
		\cB_a(F_0,F_\perp)=aL\cN_a(F_0,F_\perp)=0.
		\]
		Set
		\[
		X^2=\cB_a(F_\perp)-aL\cN_a(F_\perp),
		\quad \alpha^2=\cN_a(F_0),
		\quad S=\sum_{r=1}^{\ell}\int_{\HH}s^a|F_s-\cD_rF|^2\dd s\dd t.
		\]
		Under Lemma~\ref{Lem4.1}, $F_\perp$ corresponds to a vector-valued function on the ball with zero weighted mean. The first positive scalar eigenvalue is $2a$. Consequently,
		\begin{equation}\label{eq5.6}
			\cN_a(F_\perp)\leq\frac{X^2}{2a},
			\qquad
			\cB_a(F_\perp)\leq\frac{L+2}{2}X^2.
		\end{equation}
		By \eqref{eq3.12} and the orthogonal decomposition above,
		\begin{equation}\label{eq5.7}
			R_\ell:=\cE_\ell(F)-bL\cN_a(F)
			=\frac Sa+\frac ba X^2.
		\end{equation}
		
		For $P_r=\partial_s-\cD_r$, skew-adjointness of the matrices gives
		\[
		\sum_{r=1}^{\ell}\norm{P_rF_0}_{L^2(s^a)}^2
		=\ell\cB_a(F_0)=\ell aL\alpha^2.
		\]
		The notation $L^2(s^a)$ includes integration in $(s,t)$ and the full tensor norm. Since
		\[
		\int_{\R^n}|\cD_rH|^2\dd t
		=\int_{\R^n}|\nabla_tH|^2\dd t,
		\]
		the inequality $|A-B|^2\leq2|A|^2+2|B|^2$ and \eqref{eq5.6} yield
		\[
		\sum_{r=1}^{\ell}\norm{P_rF_\perp}_{L^2(s^a)}^2
		\leq2\ell\cB_a(F_\perp)
		\leq\ell(L+2)X^2.
		\]
		The triangle inequality in the product Hilbert space now gives
		\[
		\sqrt{\ell aL}\,\alpha
		\leq\sqrt S+\sqrt{\ell(L+2)}X.
		\]
		Cauchy's inequality in the two variables $\sqrt S$ and $\sqrt bX$, followed by \eqref{eq5.7}, implies
		\[
		\alpha^2
		\leq\left(\frac1{\ell L}+\frac{L+2}{bL}\right)R_\ell.
		\]
		Also \eqref{eq5.6} and \eqref{eq5.7} imply
		$\cN_a(F_\perp)\leq R_\ell/(2b)$. Adding these estimates proves
		\[
		\cN_a(F)\leq
		\left(\frac1{\ell L}+\frac{L+2}{bL}+\frac1{2b}\right)R_\ell.
		\]
		Taking the reciprocal of the coefficient proves \eqref{eq5.4}. Every identity extends from the smooth core by the form bounds in Section~\ref{Sec3}. Taking $\ell=2$ gives \eqref{eq5.5}.
	\end{proof}
	
	\begin{proposition}\label{Prop5.4}
		Let $\mu_{\mathrm{next}}$ be the next distinct weighted eigenvalue after $\mu_\star$. Then \eqref{eq1.11} holds, and
		\begin{equation}\label{eq5.8}
			\cE(w)-\mu_\star\cN(w)
			\geq\left(1-\frac{\mu_\star}{\mu_{\mathrm{next}}}\right)\cE(w)
			\quad\text{if } w\perp_{\dS}\bigl(\Span\{U\}+\cZ\bigr).
		\end{equation}
		One also has the explicit inverse estimate
		\begin{equation}\label{eq5.9}
			\norm{w}_{\dS}\leq\frac{2m+3n+9}{2}\norm{L_Uw}_{(\dS)^*}
			\quad\text{if }w\perp_{\dS}\cZ.
		\end{equation}
		Both estimates hold with the same constants at every member of $\cM$, after transporting the corresponding orthogonality conditions.
	\end{proposition}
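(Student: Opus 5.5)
Since the weighted problem \eqref{eq1.8} has discrete spectrum with an $\cE$-orthonormal eigenbasis (Proposition~\ref{Prop5.1}), and the horizontal degree decomposition \eqref{eq3.2} is orthogonal for both $\cE$ and $\cN$, the eigenvalues of \eqref{eq1.8} are the union over $\ell\geq0$ of the eigenvalues in each horizontal sector. I will bound $\mu_{\mathrm{next}}$ sector by sector.

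For the upper bound in \eqref{eq1.11}, Proposition~\ref{Prop5.2} already exhibits the eigenvalue $\mu_\star+m$ in horizontal degree zero, at $(k,j)=(2,0)$, with a nonzero multiplicity $\dim\mathscr H_2(\R^{n+1})>0$. Hence $\mu_{\mathrm{next}}\leq\mu_\star+m$.

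For the lower bound, I take an eigenvalue $\mu>\mu_\star$ with eigenfunction $v$ and choose a degree $\ell$ with $v_\ell\neq0$; by \eqref{eq3.3} the value $\mu$ is then an eigenvalue in sector $\ell$.
\begin{itemize}
\item If $\ell\in\{0,1\}$, Proposition~\ref{Prop5.2} gives $\mu\geq\mu_\star+m$.
\item If $\ell=2$, then by \eqref{eq3.8}, \eqref{eq3.13} (with $b(L)=\mu_\star$ at $\ell=2$) and Lemma~\ref{Lem5.3}, $\cE_2(F)\geq(\mu_\star+\delta_2)\cN_a(F)$, so $\mu\geq\mu_\star+2\mu_\star/(2m+3n+7)$.
\item If $\ell\geq3$, Lemma~\ref{Lem5.3} gives $\mu\geq bL_\ell+\delta_\ell=\mu_\star+b(\ell-2)+\delta_\ell\geq\mu_\star+b$.
\end{itemize}
It remains to compare these bounds, which is elementary. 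Since $m\geq2$, one has $2\mu_\star/(2m+3n+7)\leq \mu_\star/(m+1)$. With $\mu_\star=b(Q+2)/2$ and $Q=m+2n$, the ratio $\mu_\star/b=(m+2n+2)/2$ is to be compared against $(2m+3n+7)/2$, which is larger. So $2\mu_\star/(2m+3n+7)\leq b\leq m$, and the minimum over all sectors is at least the stated lower bound.

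For \eqref{eq5.8}, I expand $w$ in the $\cE$-orthonormal eigenbasis. Orthogonality to $\Span\{U\}+\cZ$ removes exactly the $\kappa$ and $\mu_\star$ eigenspaces (Proposition~\ref{Prop5.1}), so every remaining eigenvalue satisfies $\mu\geq\mu_{\mathrm{next}}$. Since $\cN(w)=\sum\mu_i^{-1}|\hat w_i|^2$ in energy-normalized coordinates, we get $\cN(w)\leq\mu_{\mathrm{next}}^{-1}\cE(w)$, which is \eqref{eq5.8}.

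For \eqref{eq5.9}, the Riesz representative of $L_U$ is $\Id-\mu_\star K$, acting diagonally by $1-\mu_\star/\mu_i$. On $\cZ^{\perp}$ the remaining eigenvalues are $\kappa$, where the factor has absolute value $\mu_\star/\kappa-1=2^*-2=4/(Q-2)$, and $\mu\geq\mu_{\mathrm{next}}$, where the factor is at least $c_{\mathrm{lin}}$. Hence $\norm{L_Uw}_{(\dS)^*}\geq\min\{4/(Q-2),c_{\mathrm{lin}}\}\norm w_{\dS}$. The lower bound in \eqref{eq1.11} gives $c_{\mathrm{lin}}\geq 2/(2m+3n+9)$, since $x/(x+\mu)$ is increasing in $x$. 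Also $4/(Q-2)\geq 2/(2m+3n+9)$, because $2(2m+3n+9)\geq Q-2$. This yields \eqref{eq5.9}.

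Transport to every member of $\cM$ follows because $\cU_{\lambda,\eta}$ is an isometry of $\dS$ and $L^{2^*}$. It conjugates $L_U$ to the linearization $-\Delta_G-\mu_\star\lambda^2 D(\delta_\lambda(\eta^{-1}\cdot))^{-1}$ at $U_{\lambda,\eta}$, and maps $U$ and $\cZ$ to the corresponding tangent objects there.

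The main obstacle I expect is bookkeeping rather than analysis. One point is checking that the $\ell\geq3$ bound and the degree-one plus branch do not fall below the degree-two bound. The other is making the spectral expansion legitimate in $\dS$: this needs the compact operator $K$ from Proposition~\ref{Prop5.1} to be diagonalized compatibly with the sectors, which holds since $K$ commutes with the projections $\Pi_\ell$.
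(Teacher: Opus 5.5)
Your proposal is correct and follows essentially the same route as the paper. You bound the spectrum sector by sector, using Proposition~\ref{Prop5.2} in horizontal degrees zero and one, Lemma~\ref{Lem5.3} in degree two, and a gap of at least $b$ for $\ell\geq3$. You obtain the upper bound from the $(k,j)=(2,0)$ degree-zero eigenvalue, and you diagonalize $\Id-\mu_\star K$ to get the inverse constant $\max\{(Q-2)/4,\,c_{\mathrm{lin}}^{-1}\}\leq(2m+3n+9)/2$. The differences are cosmetic: for $\ell\geq3$ you invoke Lemma~\ref{Lem5.3} where the paper uses \eqref{eq3.12}--\eqref{eq3.13} directly, and you spell out the elementary comparisons $\delta_2<b$ and $c_{\mathrm{lin}}\geq 2/(2m+3n+9)$, which the paper states without computation.
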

	\begin{proof}
		Compactness gives a next distinct eigenvalue. In horizontal degrees zero and one, Proposition~\ref{Prop5.2} gives a gap $m=2b$ above $\mu_\star$. Lemma~\ref{Lem5.3} gives the gap $\delta_2$ in degree two. For $\ell\geq3$, \eqref{eq3.12}-\eqref{eq3.13} give a gap at least $b(\ell-2)\geq b$. Since $0<\delta_2<b$, orthogonal decomposition proves the lower bound in \eqref{eq1.11} on the complement of $U$ and $\cZ$.
		
		For the upper bound, take $H_2(y)=y_0^2-y_1^2$. The corresponding degree-zero eigenfunction is
		\[
		v_2(x,t)=D^{-(Q+6)/4}\bigl[(\rho^4-1)^2-4t_1^2\bigr],
		\]
		with eigenvalue $\kappa+4b=\mu_\star+m$ by Proposition~\ref{Prop5.2}. Lemma~\ref{Lem4.1} places it in the energy space, so it gives the upper bound in \eqref{eq1.11}. Expansion in the eigenbasis of $K$, orthogonal with respect to $\cE$, proves \eqref{eq5.8}. On $\cZ^{\perp_{\dS}}$, the self-adjoint Fredholm operator $\Id-\mu_\star K$ is injective and boundedly invertible. Under the Riesz isometry between $\dS$ and its dual, an admissible inverse constant is
		\[
		C_G=\max\left\{\frac{\kappa}{\mu_\star-\kappa},
		\left(1-\frac{\mu_\star}{\mu_{\mathrm{next}}}\right)^{-1}\right\}.
		\]
		The lower bound in \eqref{eq1.11} gives $(1-\mu_\star/\mu_{\mathrm{next}})^{-1}\leq(2m+3n+9)/2$. Also, $\kappa/(\mu_\star-\kappa)=(Q-2)/4$ is smaller than this number, proving \eqref{eq5.9}. The amplitude direction has negative linearized energy. Thus \eqref{eq5.8} is stated on the complement of both $U$ and $\cZ$, whereas the inverse estimate holds on $\cZ^{\perp_{\dS}}$.
		
		The maps $\cU_{\lambda,\eta}$ in \eqref{eq1.5} are unitary in the energy norm. The transformed weight is
		\[
		U_{\lambda,\eta}^{4/(Q-2)}(g)
		=\lambda^2D^{-1}\bigl(\delta_\lambda(\eta^{-1}g)\bigr).
		\]
		A change of variables therefore intertwines the weighted forms and the linearized operators. It also transports the energy orthogonal complements. This proves the uniformity along $\cM$.
	\end{proof}
	
	Put
	\[
	c_{\mathrm{lin}}=1-\frac{\mu_\star}{\mu_{\mathrm{next}}},\qquad
	\frac{2}{2m+3n+9}\leq c_{\mathrm{lin}}\leq\frac4{Q+6}.
	\]
	The local parameter family of nonzero extremals has, at $U$, the tangent space
	\[
	T_U\mathfrak M=\Span\{U\}\oplus\cZ,\qquad
	\dim T_U\mathfrak M=m+n+2.
	\]
	The sum is orthogonal with respect to $\cE$ because $U$ and $\cZ$ belong to different weighted eigenspaces. The normalized solution family has tangent space $T_U\cM=\cZ$ of dimension $m+n+1$. Both $\Def$ and $d^2$ are homogeneous of degree two and invariant under \eqref{eq1.5}. Since $0\in\mathfrak M$, we also have $d(u)\leq\norm u_{\dS}$.
	
	\subsection{Sharp local asymptotics and global stability}
	
	We first establish existence and local uniqueness of the nearest bubble, together with the regularity of its parameters.
	
	\begin{lemma}\label{Lem5.5}
		The nonzero part $\mathfrak M\setminus\{0\}$ of the extremal cone is locally a $C^2$ embedded manifold in $\dS(G)$. If $d(u)<\norm u_{\dS}$, the distance is attained at a nonzero bubble. There is $\varepsilon_G>0$ such that, when $d(u)<\varepsilon_G\norm u_{\dS}$, this nearest point is unique. Writing it as $z=cU_{\lambda,\eta}$, one has
		\begin{equation}\label{eq5.10}
			u=z+w,\qquad
			w\perp_{\dS}T_z\mathfrak M,\qquad
			d(u)=\norm w_{\dS},\qquad
			\cE(u)=c^2\cE(U)+d(u)^2.
		\end{equation}
		In a fixed neighborhood of $U$, the nearest-point parameters are $C^1$ functions of $u$. For every sufficiently small $r\perp_{\dS}T_U\mathfrak M$, the nearest point to $U+r$ is exactly $U$.
	\end{lemma}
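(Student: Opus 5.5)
We follow the standard Bianchi--Egnell modulation scheme, adapted to the parametrization $(c,\lambda,\eta)\mapsto cU_{\lambda,\eta}$ and the isometries \eqref{eq1.5}.

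\emph{Step 1: the manifold structure.} Define $\Theta(c,\lambda,\eta)=cU_{\lambda,\eta}$ on $(\R\setminus\{0\})\times(0,\infty)\times G$. We show that $\Theta$ is $C^2$ into $\dS(G)$ by differentiating under the integral. The parameter derivatives of $U_{\lambda,\eta}$ up to second order are smooth functions of the form $D^{-(Q-2)/4-k}P$, with $P$ a polynomial of controlled weighted degree. Hence they decay like $\rho^{2-Q}$, their horizontal gradients decay like $\rho^{1-Q}$, and the cutoff argument of Lemma~\ref{Lem2.1} puts them in $\dS$ locally uniformly in the parameters. The differential at $(1,1,e)$ has image $\Span\{U\}\oplus\cZ$ by \eqref{eq4.25}. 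It is injective by Theorem~\ref{Thm1.2} together with the $\cE$-orthogonality of $U$ and $\cZ$. Composition with the unitaries $\cU_{\lambda,\eta}$ and the rule \eqref{eq2.11} transfer this to every point. For an embedding we also need $\Theta$ to be a homeomorphism onto its image. If $c_jU_{\lambda_j,\eta_j}\to cU_{\lambda,\eta}$ in $\dS$, then $c_j\to c$ by norms. The parameters cannot escape, since $\cE(U_{\lambda_j,\eta_j},U_{\lambda,\eta})\to0$ whenever $\lambda_j\to0$, $\lambda_j\to\infty$ or $\eta_j\to\infty$, by weak convergence to zero. Compactness of the remaining parameters and injectivity of $\Theta$ (the bubbles $U_{\lambda,\eta}$ are distinct functions) then give convergence.

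\emph{Step 2: attainment.} Take a minimizing sequence $z_j=c_jU_{\lambda_j,\eta_j}$ for $d(u)$. The $c_j$ are bounded. If the parameters escape, then $U_{\lambda_j,\eta_j}\rightharpoonup0$, so
\[
\norm{u-z_j}^2=\norm u^2+c_j^2\cE(U)-2c_j\cE(u,U_{\lambda_j,\eta_j})\geq\norm u^2-o(1).
\]
This contradicts $d(u)<\norm u_{\dS}$. Hence a subsequence converges and the infimum is attained. The minimizer is nonzero, again because $d(u)<\norm u_{\dS}$. Minimality gives first-order orthogonality $w\perp_{\dS}T_z\mathfrak M$. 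Since $z\in T_z\mathfrak M$, this yields $\cE(u,z)=\cE(z)$. Therefore $\cE(u)=\cE(z)+\norm w^2$, which is \eqref{eq5.10}.

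\emph{Step 3: local uniqueness and regularity (main obstacle).} By homogeneity and invariance, normalize $\norm u_{\dS}^2=\cE(U)$ and move the nearest point to $c=1$, $(\lambda,\eta)=(1,e)$. The condition $d(u)<\varepsilon_G\norm u$ then forces every nearest point into a small neighbourhood of $U$. This uses the escape argument of Step 2 quantitatively, via a compactness-contradiction argument. Next consider
\[
\Phi(u,p)=\bigl(\cE(u-\Theta(p),\partial_{p_i}\Theta(p))\bigr)_i.
\]
This map is $C^1$, and $\partial_p\Phi(U,p_0)=-$Gram matrix of $T_U\mathfrak M$, which is invertible. The implicit function theorem gives a unique $C^1$ critical-point map $p(u)$ near $(U,p_0)$. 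In particular, every nearest point in that neighbourhood equals $\Theta(p(u))$, which yields uniqueness. The delicate part is making the neighbourhood scale-uniform. We handle this by applying the implicit function theorem once at $U$ and using the group action together with homogeneity to cover all $u$ with small relative distance. Finally, for $r\perp_{\dS}T_U\mathfrak M$ small, the point $p_0$ solves $\Phi(U+r,p)=0$. By uniqueness, and since the nearest point lies in the neighbourhood, the nearest point is $U$.
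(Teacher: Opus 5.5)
Your proposal is correct and follows essentially the same route as the paper: $C^2$ regularity of the orbit from decay bounds on parameter derivatives, weak convergence of escaping bubbles to zero to get attainment, and an implicit function theorem for the stationarity system at $U$, transported by similarities and scaling, to get uniqueness, $C^1$ dependence and the statement about $U+r$. The differences are minor. The paper optimizes the amplitude first via $d(u)^2=\cE(u)-\sup_{\lambda,\eta}\cE(u,U_{\lambda,\eta})^2/\cE(U)$, and it proves the weak convergence \eqref{eq5.12} that you take for granted. Also, convergence of norms only gives $|c_j|\to|c|$; the sign of $c_j$ then follows from the positivity of $\cE(U_{\lambda_j,\eta_j},U_{\lambda,\eta})$.
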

	\begin{proof}
		Write $\theta=(\log\lambda,\eta)$ in exponential coordinates, so that $\theta=0$ corresponds to $(\lambda,\eta)=(1,e)$, and set $U_\theta=U_{\lambda,\eta}$. On a compact set of these parameters, the denominator in $U_{\lambda,\eta}$ is comparable to $D$. It is a polynomial of weighted degree at most four in $(x,t)$, and each horizontal derivative has degree at most three. Parameter derivatives up to order two preserve these bounds. The product rule gives, uniformly on the parameter set,
		\begin{equation}\label{eq5.11}
			\left|\partial_\theta^\beta U_\theta(g)\right|
			\leq C(1+\rho(g))^{2-Q},\qquad
			\left|\nabla_G\partial_\theta^\beta U_\theta(g)\right|
			\leq C(1+\rho(g))^{1-Q},\qquad |\beta|\leq2.
		\end{equation}
		The upper bound for the horizontal gradients in \eqref{eq5.11} belongs to $L^2(G)$ because $Q>2$. The first bound and the cutoff argument in Lemma~\ref{Lem2.1} place these parameter derivatives in $\dS(G)$. Dominated convergence then proves that the orbit is $C^2$ in energy. The tangent vectors in \eqref{eq4.25}, together with the amplitude derivative, are linearly independent by Theorem~\ref{Thm1.2} and Proposition~\ref{Prop5.1}.
		
		Next, if $(\lambda_j,\eta_j)$ escapes every compact parameter set, then
		\begin{equation}\label{eq5.12}
			U_{\lambda_j,\eta_j}\rightharpoonup0\quad\text{in }\dS(G).
		\end{equation}
		It suffices to test against $\varphi\in C_c^\infty(G)$ and use
		$\cE(U_{\lambda,\eta},\varphi)=\kappa\int U_{\lambda,\eta}^{2^*-1}\varphi$.
		If $\lambda_j\to0$, the global bound
		$U_{\lambda_j,\eta_j}\leq\lambda_j^{(Q-2)/2}$ gives convergence to zero. If $\lambda_j\to\infty$, then
		\[
		\int_G U_{\lambda_j,\eta_j}^{2^*-1}\dd g
		=\lambda_j^{-(Q-2)/2}\int_G U^{2^*-1}\dd g\longrightarrow0.
		\]
		The integral is finite since $U^{2^*-1}=O(\rho^{-Q-2})$. If $\lambda_j$ stays in a compact subset of $(0,\infty)$, then $\rho(\eta_j)\to\infty$ along an escaping subsequence, and the bubble tends uniformly to zero on the support of $\varphi$. These cases cover a subsequence of every escaping sequence. The bounded energy norm and density of the tests prove \eqref{eq5.12}.
		
		Optimizing first in the amplitude gives
		\begin{equation}\label{eq5.13}
			d(u)^2=\cE(u)-\sup_{\lambda,\eta}
			\frac{\cE(u,U_{\lambda,\eta})^2}{\cE(U)}.
		\end{equation}
		If $d(u)<\norm u_{\dS}$, the supremum in \eqref{eq5.13} is positive. By \eqref{eq5.12}, a maximizing sequence stays in a compact parameter set. Continuity gives a maximizing pair and a nonzero optimizing amplitude. Differentiation proves the orthogonality in \eqref{eq5.10}, and the amplitude direction gives the Pythagorean identity. The same argument shows that the cone is closed. Indeed, strong convergence to a nonzero limit bounds the amplitudes above and away from zero, while \eqref{eq5.12} bounds the parameters. Fixing the amplitude at one proves closedness of $\cM$.
		
		To identify the nearest point near $U$, suppose $c_jU_{\lambda_j,\eta_j}\to U$ strongly. Then $|c_j|\to1$, and \eqref{eq5.12} bounds the parameters in a compact set. Every parameter limit satisfies $cU_{\lambda,\eta}=U$. Positivity and the unique maximum of the bubble give $c>0$ and $\eta=e$. Evaluation at the identity and at $(0,t)$ gives $\lambda=1$ and $c=1$. Hence all parameters tend to $(1,1,e)$. Nearest points to functions converging to $U$ therefore have parameters converging to $(1,1,e)$.
		
		In local coordinates $\vartheta$ for the full cone, centered at $z_0=U$, apply the implicit function theorem to
		\[
		\cE\bigl(u-z_\vartheta,\partial_{\vartheta_i}z_\vartheta\bigr)=0.
		\]
		At $u=z_0=U$ the parameter derivative is minus the positive definite Gram matrix of the tangent vectors. There is a unique local stationary parameter, depending $C^1$ on $u$. Every global nearest point for $u$ sufficiently close to $U$ is in this parameter neighborhood, so it equals that stationary point. If $u=U+r$ with $r\perp_{\dS}T_U\mathfrak M$, then $\vartheta=0$ solves the stationary system. Thus $U$ is the unique nearest point to $u$.
		
		Finally normalize $\norm u_{\dS}=1$ and suppose $d(u)<\varepsilon_G$. Choose $c_0U_{\lambda_0,\eta_0}\in\mathfrak M$ with $\norm{u-c_0U_{\lambda_0,\eta_0}}_{\dS}<2\varepsilon_G$. Then $|c_0|\norm U_{\dS}\geq1-2\varepsilon_G$. After applying its inverse similarity and dividing by $c_0$, the distance to $U$ is at most
		$2\varepsilon_G\norm U_{\dS}/(1-2\varepsilon_G)$.
		For small $\varepsilon_G$ the fixed local argument applies. Similarities and nonzero scalar multiplication preserve uniqueness of minimizers. This proves uniqueness whenever $d(u)<\varepsilon_G\norm u_{\dS}$.
	\end{proof}
	
	\begin{proposition}\label{Prop5.6}
		Set $\sigma=\min\{1,4/(Q-2)\}$. There are $\varepsilon_G,C_G>0$ such that, for
		$0<d(u)<\varepsilon_G\norm u_{\dS}$ and $t_u=d(u)/\norm u_{\dS}$,
		\begin{equation}\label{eq5.14}
			\Def(u)\geq\bigl(c_{\mathrm{lin}}-C_Gt_u^\sigma\bigr)d(u)^2.
		\end{equation}
		The optimal asymptotic local coefficient is exactly $c_{\mathrm{lin}}$:
		\begin{equation}\label{eq5.15}
			\lim_{\varepsilon\downarrow0}
			\inf_{\substack{u\in\dS(G)\\0<d(u)\leq\varepsilon\norm u_{\dS}}}
			\frac{\Def(u)}{d(u)^2}=c_{\mathrm{lin}}.
		\end{equation}
		In particular, after decreasing $\varepsilon_G$, the lower bound
		$\Def(u)\geq c_{\mathrm{lin}}d(u)^2/2$ holds in this neighborhood. A local inequality with right-hand side $c\,d(u)^q\norm u_{\dS}^{2-q}$ and $c>0$ requires $q\geq2$.
	\end{proposition}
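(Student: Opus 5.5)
By homogeneity and the invariance of $\Def$ and $d$ under \eqref{eq1.5}, I will reduce to a normalized configuration. Lemma~\ref{Lem5.5} applies when $t_u<\varepsilon_G$. After applying a similarity and dividing by the amplitude $c$ of the unique nearest bubble, I may assume
\[
u=c\,(U+r),\qquad r\perp_{\dS}T_U\mathfrak M,\qquad \norm r_{\dS}=d(u)/|c|.
\]
The Pythagorean identity in \eqref{eq5.10} shows that $\norm r_{\dS}$ is comparable to $t_u$, with constants depending only on $\norm U_{\dS}$. So it suffices to prove
\[
\Def(U+r)\geq\bigl(c_{\mathrm{lin}}-C\norm r^\sigma\bigr)\cE(r).
\]

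The core step is a Taylor expansion of $\norm{U+r}_{2^*}^2$. Put $B=\int U^{2^*}$. Since $r\perp_{\dS}U$ and $-\Delta_GU=\kappa U^{2^*-1}$, the first-order term $\int U^{2^*-1}r$ vanishes. The elementary inequality
\[
\bigl||1+x|^{p}-1-px-\tfrac{p(p-1)}2x^2\bigr|\leq C\bigl(|x|^{\min(3,p)}+|x|^p\bigr),\qquad p=2^*,
\]
applied pointwise with $x=r/U$ and then integrated, gives
\[
\int|U+r|^{2^*}=B+\tfrac{2^*(2^*-1)}2\int U^{2^*-2}r^2+O\bigl(\norm r_{2^*}^{\min(3,2^*)}\bigr).
\]
Raising this to the power $2/2^*$ and multiplying by $S_G=\kappa B^{2/Q}$, the quadratic term becomes exactly
\[
(2^*-1)\kappa\int U^{2^*-2}r^2=\mu_\star\cN(r),
\]
because $U^{2^*-2}=D^{-1}$. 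The remainder is $O(\norm r^{2+\sigma})$ by Sobolev, with $\sigma=\min\{1,2^*-2\}=\min\{1,4/(Q-2)\}$. Using $\cE(U+r)=\cE(U)+\cE(r)$ and $S_G B^{2/2^*}=\cE(U)$, I get
\[
\Def(U+r)=\cE(r)-\mu_\star\cN(r)+O(\norm r^{2+\sigma}).
\]
Now \eqref{eq5.8} from Proposition~\ref{Prop5.4} yields \eqref{eq5.14}; the lower bound $c_{\mathrm{lin}}d^2/2$ follows by shrinking $\varepsilon_G$.

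For the upper half of \eqref{eq5.15}, take $u_\varepsilon=U+\varepsilon v$ with $v$ an eigenfunction at $\mu_{\mathrm{next}}$. This $v$ is $\cE$-orthogonal to $U$ and $\cZ$, since it lies in a different eigenspace of $K$. By the last clause of Lemma~\ref{Lem5.5}, $d(u_\varepsilon)=\varepsilon\norm v$ for small $\varepsilon$. The expansion then gives
\[
\Def(u_\varepsilon)/d(u_\varepsilon)^2=1-\mu_\star/\mu_{\mathrm{next}}+O(\varepsilon^\sigma),
\]
and combined with \eqref{eq5.14} this proves the limit equals $c_{\mathrm{lin}}$. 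The final remark follows from the same family. If $\Def(u)\geq c\,d^q\norm u^{2-q}$ held with $q<2$, then along $u_\varepsilon$ we would have $\Def\sim c_{\mathrm{lin}}\varepsilon^2$ while the right-hand side is of order $\varepsilon^q\gg\varepsilon^2$, a contradiction.

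The main obstacle is the remainder estimate when $2^*<3$, that is, $Q>6$. There the cubic Taylor bound fails and one needs the pointwise inequality with exponent $p$ on the region $|r|\gtrsim U$. I would handle this by splitting $G$ into $\{|r|\leq U\}$ and its complement, and applying Hölder with $\norm U_{2^*}$. A secondary point is checking that the comparability of $\norm r$ with $t_u$ costs nothing at leading order. It does not: the ratio $\Def/d^2$ is exactly invariant under scaling and similarities, so only the size of the error term involves that comparability.
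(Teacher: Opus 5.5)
Your proposal is correct and follows the paper's proof. You reduce to $U+r$ with $r\perp_{\dS}T_U\mathfrak M$ via Lemma~\ref{Lem5.5} and \eqref{eq5.17}, combine the expansion \eqref{eq5.16} with \eqref{eq5.8}, and use the test family $U+\epsilon\psi$, with $\psi$ in the $\mu_{\mathrm{next}}$ eigenspace, to show that both $c_{\mathrm{lin}}$ and the exponent $2$ are sharp. The only difference is that you obtain \eqref{eq5.16} from a direct pointwise second-order Taylor bound for $|1+x|^{2^*}$ rather than from local H\"older continuity of $\Def''$, and this yields the same remainder $O(\norm r_{\dS}^{2+\sigma})$.
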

	\begin{proof}
		Put $p_c=2^*$ and $A_U=\int_GU^{p_c}\dd g$. The second derivative of the deficit at $U$ is
		\[
		\tfrac12\Def''(U)[r,r]
		=\cE(r)-\mu_\star\cN(r)
		+\frac{\kappa(p_c-2)}{A_U}
		\left(\int_GU^{p_c-1}r\dd g\right)^2.
		\]
		If $r\perp_{\dS}U$, the last term vanishes by \eqref{eq1.4}.
		For $2<p_c\leq3$, the inequality
		\[
		\big||v|^{p_c-2}-|z|^{p_c-2}\big|\leq C|v-z|^{p_c-2}
		\]
		gives H\"older continuity of the multiplication part of the Hessian, as a map from $L^{p_c}$ to $L^{p_c/(p_c-2)}$. For $p_c\geq3$, the corresponding estimate follows from
		\[
		\big||v|^{p_c-2}-|z|^{p_c-2}\big|
		\leq C(|v|+|z|)^{p_c-3}|v-z|
		\]
		and H\"older's inequality. The remaining Hessian terms are products of first derivatives of the norm and smooth scalar powers of $\int|v|^{p_c}$, which stays bounded away from zero near $U$. They are locally Lipschitz. The critical embedding thus makes $\Def''$ locally H\"older continuous in energy with exponent $\sigma$. Taylor's formula with integral remainder proves
		\begin{equation}\label{eq5.16}
			\left|\Def(U+r)-\bigl(\cE(r)-\mu_\star\cN(r)\bigr)\right|
			\leq C_G\norm r_{\dS}^{2+\sigma},
			\qquad r\perp_{\dS}U,
		\end{equation}
		for every sufficiently small real $r$ satisfying the stated orthogonality.
		
		Use the decomposition \eqref{eq5.10} from Lemma~\ref{Lem5.5}, and set
		$r=c^{-1}\cU_{\lambda,\eta}^{-1}w$. It is normal to $T_U\mathfrak M$. The exact distance and Pythagorean identities give
		\begin{equation}\label{eq5.17}
			d(u)=|c|\norm r_{\dS},\qquad
			\norm r_{\dS}=\norm U_{\dS}\frac{t_u}{\sqrt{1-t_u^2}}.
		\end{equation}
		Homogeneity and invariance of the deficit, \eqref{eq5.8}, and \eqref{eq5.16} yield \eqref{eq5.14}. This proves the lower bound in \eqref{eq5.15}.
		
		For the reverse bound, choose an eigenfunction of unit energy $\psi$ at $\mu_{\mathrm{next}}$. It is orthogonal to $U$ and to $\cZ$. For small real $\epsilon$, Lemma~\ref{Lem5.5} gives
		\[
		d(U+\epsilon\psi)=|\epsilon|,
		\qquad
		\Def(U+\epsilon\psi)=c_{\mathrm{lin}}\epsilon^2
		+O(|\epsilon|^{2+\sigma}).
		\]
		This proves \eqref{eq5.15}. For the same perturbation, the quotient
		\[
		\frac{\Def(U+\epsilon\psi)}
		{d(U+\epsilon\psi)^q\norm{U+\epsilon\psi}_{\dS}^{2-q}}
		\]
		tends to zero as $\epsilon\to0$ when $q<2$, proving optimality of the exponent. This local expansion corresponds to the Euclidean argument in \cite[Lemma~1]{BianchiEgnell1991}.
	\end{proof}
	
	The preceding asymptotics also identify the normal directions along which the local coefficient is approached. Let $E_{\mathrm{next}}$ denote the full eigenspace at $\mu_{\mathrm{next}}$, and let $\mu_{\mathrm{after}}$ be the next distinct eigenvalue above it. Set
	\[
	c_{\mathrm{after}}=1-\mu_\star/\mu_{\mathrm{after}}>c_{\mathrm{lin}}.
	\]
	
	\begin{corollary}\label{Cor5.7}
		Let $u$ satisfy $0<d(u)<\varepsilon_G\norm u_{\dS}$, as in Proposition~\ref{Prop5.6}. Use the decomposition \eqref{eq5.10} and the transported normal vector $r$ in \eqref{eq5.17}. Put $q_u=r/\norm r_{\dS}$. Then
		\begin{equation}\label{eq5.18}
			\dist_{\dS}(q_u,E_{\mathrm{next}})^2
			\leq
			\frac{\Def(u)/d(u)^2-c_{\mathrm{lin}}+C_Gt_u^\sigma}
			{c_{\mathrm{after}}-c_{\mathrm{lin}}}.
		\end{equation}
		Consequently, for a sequence of such functions with $t_{u_j}=d(u_j)/\norm{u_j}_{\dS}\to0$,
		the equivalence
		\[
		\frac{\Def(u_j)}{d(u_j)^2}\longrightarrow c_{\mathrm{lin}}
		\quad\Longleftrightarrow\quad
		\dist_{\dS}(q_{u_j},E_{\mathrm{next}})\longrightarrow0
		\]
		holds.
	\end{corollary}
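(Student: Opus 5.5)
The plan is to refine the spectral step in the proof of Proposition~\ref{Prop5.6} with a two-level eigenvalue estimate. By the invariance and homogeneity used there, the decomposition \eqref{eq5.10} reduces everything to $U+r$. The normal vector $r$ satisfies $r\perp_{\dS}T_U\mathfrak M=\Span\{U\}\oplus\cZ$, and $d(u)=|c|\norm r_{\dS}$. Homogeneity of degree two of both $\Def$ and $d^2$ gives $\Def(u)/d(u)^2=\Def(U+r)/\norm r_{\dS}^2$. Estimate \eqref{eq5.16} then yields
\[
\frac{\Def(u)}{d(u)^2}\geq \frac{\cE(r)-\mu_\star\cN(r)}{\cE(r)}-C_G\norm r_{\dS}^{\sigma}.
\]
By \eqref{eq5.17}, $\norm r_{\dS}\leq C t_u$ once $\varepsilon_G$ is small, so the error term is $C_Gt_u^\sigma$ after enlarging $C_G$.

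Next expand $q_u=r/\norm r_{\dS}$ in the $\cE$-orthonormal eigenbasis of the compact operator $K$ from Proposition~\ref{Prop5.1}. Since $q_u\perp_{\dS}\Span\{U\}+\cZ$, only eigenvalues $\mu\geq\mu_{\mathrm{next}}$ occur. Write $q_u=q_1+q_2$, with $q_1\in E_{\mathrm{next}}$ and $q_2$ in the span of the eigenspaces with $\mu\geq\mu_{\mathrm{after}}$. These pieces are orthogonal for both $\cE$ and $\cN$. Since $\cE(q_u)=1$,
\[
\cE(q_u)-\mu_\star\cN(q_u)=\sum_\mu\Bigl(1-\frac{\mu_\star}{\mu}\Bigr)\cE(q_\mu)\geq c_{\mathrm{lin}}\cE(q_1)+c_{\mathrm{after}}\cE(q_2)=c_{\mathrm{lin}}+(c_{\mathrm{after}}-c_{\mathrm{lin}})\cE(q_2).
\]
Since $E_{\mathrm{next}}$ is a closed subspace and $q_2$ is the $\cE$-orthogonal component of $q_u$ relative to it, we have $\cE(q_2)=\dist_{\dS}(q_u,E_{\mathrm{next}})^2$. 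Combining this with the first display and rearranging gives \eqref{eq5.18}. The existence of $\mu_{\mathrm{after}}$, with $c_{\mathrm{after}}>c_{\mathrm{lin}}$ and $E_{\mathrm{next}}$ finite-dimensional, follows from discreteness of the spectrum (Proposition~\ref{Prop5.1}).

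For the equivalence, the forward direction is immediate from \eqref{eq5.18}, because $t_{u_j}\to0$. For the converse, use the upper form of \eqref{eq5.16}, whose Taylor bound is two-sided:
\[
\frac{\Def(u_j)}{d(u_j)^2}\leq \cE(q_{u_j})-\mu_\star\cN(q_{u_j})+C_Gt_{u_j}^\sigma .
\]
Let $\delta_j=\dist_{\dS}(q_{u_j},E_{\mathrm{next}})$. Then $\cE(q_{u_j})-\mu_\star\cN(q_{u_j})=c_{\mathrm{lin}}\cE(q_1)+(\text{contribution of }q_2)$, and this is at most $c_{\mathrm{lin}}+\delta_j^2$, because $1-\mu_\star/\mu\leq1$ and $\cE(q_1)\leq1$. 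Hence $\limsup_j \Def(u_j)/d(u_j)^2\leq c_{\mathrm{lin}}$. The matching $\liminf$ bound comes from \eqref{eq5.14}, which gives the limit $c_{\mathrm{lin}}$.

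The only delicate point is checking that the remainder in \eqref{eq5.16} is controlled by $t_u^\sigma d(u)^2$ uniformly. This holds because $\norm r_{\dS}$ is comparable to $t_u$ via \eqref{eq5.17}, and the normal vector produced by Lemma~\ref{Lem5.5} is exactly orthogonal to $U$, as \eqref{eq5.16} requires.
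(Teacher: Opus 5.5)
Your proposal is correct and follows essentially the same route as the paper. You expand $q_u$ in the $\cE$-orthonormal eigenbasis of $K$ to obtain $\cE(q_u)-\mu_\star\cN(q_u)\geq c_{\mathrm{lin}}+(c_{\mathrm{after}}-c_{\mathrm{lin}})\dist_{\dS}(q_u,E_{\mathrm{next}})^2$, then transfer this to $\Def(u)/d(u)^2$ via the two-sided Taylor bound \eqref{eq5.16} and \eqref{eq5.17}; your converse through the explicit upper bound $c_{\mathrm{lin}}+\delta_j^2$ is a quantitative form of the paper's remark that $\Id-\mu_\star K$ acts as $c_{\mathrm{lin}}\Id$ on $E_{\mathrm{next}}$.
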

	\begin{proof}
		Expand the normal vector $q_u$, of unit energy, in the weighted eigenbasis. Its components in $E_{\mathrm{next}}$ have linearized energy $c_{\mathrm{lin}}$, while every remaining component has linearized energy at least $c_{\mathrm{after}}$. Thus
		\[
		\cE(q_u)-\mu_\star\cN(q_u)
		\geq c_{\mathrm{lin}}
		+(c_{\mathrm{after}}-c_{\mathrm{lin}})
		\dist_{\dS}(q_u,E_{\mathrm{next}})^2.
		\]
		By \eqref{eq5.16} and \eqref{eq5.17}, this linearized energy differs from $\Def(u)/d(u)^2$ by at most $C_Gt_u^\sigma$. This proves \eqref{eq5.18} and the forward implication. Conversely, the bounded operator $\Id-\mu_\star K$ acts as $c_{\mathrm{lin}}\Id$ on $E_{\mathrm{next}}$, so convergence of the normal direction to that eigenspace implies convergence of its linearized energy to $c_{\mathrm{lin}}$. The Taylor error tends to zero as well.
	\end{proof}
	
	\begin{proof}[Proof of Theorem~\ref{Thm1.1}]
		Theorem~\ref{Thm2.6} gives the equality classification. For the global estimate, argue by contradiction and choose $u_j$ with $d(u_j)>0$ such that
		\[
		\frac{\Def(u_j)}{d(u_j)^2}\longrightarrow0.
		\]
		By homogeneity normalize $\cE(u_j)=1$. Since $d(u_j)\leq1$, we have $\Def(u_j)\to0$ and $\norm{u_j}_{2^*}^2\to S_G^{-1}$. Thus $u_j/\norm{u_j}_{2^*}$ is a normalized minimizing sequence. By Theorem~\ref{Thm2.7}, similarities transform a subsequence into one converging strongly to a scalar multiple of $U$. Invariance of the cone and of its energy distance gives $d(u_j)\to0$. Proposition~\ref{Prop5.6} now yields
		\[
		\frac{\Def(u_j)}{d(u_j)^2}\geq\frac{c_{\mathrm{lin}}}{2}
		\]
		for all sufficiently large $j$, a contradiction.
	\end{proof}
	
	\subsection{Local rigidity and error estimates for the critical equation}
	
	The inverse estimate gives a local error bound for the normalized critical equation. We use the solution family $\cM$ from \eqref{eq1.6} and define
	\[
	\cF(u)=-\Delta_Gu-\kappa|u|^{4/(Q-2)}u
	\quad\text{as an element of }(\dS(G))^*.
	\]
	
	\begin{theorem}\label{Thm5.8}
		There exist $\varepsilon_G>0$ and $C_G\geq1$ such that
		\begin{equation}\label{eq5.19}
			C_G^{-1}\dist_{\dS}(u,\cM)
			\leq\norm{\cF(u)}_{(\dS)^*}
			\leq C_G\dist_{\dS}(u,\cM)
		\end{equation}
		for every real $u\in\dS(G)$ with $\dist_{\dS}(u,\cM)<\varepsilon_G$. In particular, every such solution of
		\begin{equation}\label{eq5.20}
			-\Delta_Gu=\kappa|u|^{4/(Q-2)}u
		\end{equation}
		belongs to $\cM$.
	\end{theorem}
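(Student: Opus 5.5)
The plan is to reduce to a neighbourhood of the reference bubble $U$ by symmetry, and then run a Lyapunov--Schmidt style argument with the inverse estimate \eqref{eq5.9}.

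\textbf{Reduction and nearest point.} The maps $\cU_{\lambda,\eta}$ are isometries of $\dS(G)$. They commute with $\cF$ in the sense that $\cF(\cU_{\lambda,\eta}u)=\cU_{\lambda,\eta}^{*,-1}\cF(u)$, with dual norms preserved, by the scaling \eqref{eq1.5} and $Q$-homogeneity of Haar measure. They also preserve $\cM$. So it suffices to treat $u$ with $\dist_{\dS}(u,\cM)<\varepsilon_G$ whose nearest point in $\cM$ is $U$.

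Existence of the nearest point in $\cM$ follows as in Lemma~\ref{Lem5.5}. One uses the weak convergence \eqref{eq5.12} of escaping bubbles, so a minimizing sequence of parameters stays compact. Closedness of $\cM$ was shown there as well.

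At the nearest point we write $u=U+w$ with $w\perp_{\dS}T_U\cM=\cZ$ and $\norm w_{\dS}=\dist_{\dS}(u,\cM)$. This uses only stationarity in the $m+n+1$ parameters $(\lambda,\eta)$; the amplitude is fixed at one. Uniqueness is not needed, only one nearest point.

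\textbf{Expansion of $\cF$.} We expand
\[
\cF(U+w)=\cF(U)+L_Uw+\mathcal R(w),\qquad \cF(U)=0,
\]
where $L_U w=-\Delta_Gw-\mu_\star D^{-1}w$. Here $\kappa(2^*-1)U^{2^*-2}=\mu_\star D^{-1}$ by the definition of $U$. The remainder is
\[
\mathcal R(w)=-\kappa\bigl(|U+w|^{2^*-2}(U+w)-U^{2^*-1}-(2^*-1)U^{2^*-2}w\bigr).
\]
Pointwise it is bounded by $C(|U|^{2^*-3}|w|^2+|w|^{2^*-1})$ when $2^*\ge3$, and by $C|w|^{2^*-1}$ when $2^*<3$.

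Pairing with a test function $\varphi$ and applying Hölder with exponents $2^*$, together with the Folland--Stein embedding, gives
\[
\norm{\mathcal R(w)}_{(\dS)^*}\le C\norm w_{\dS}^{1+\sigma},\qquad \sigma=\min\{1,4/(Q-2)\},
\]
for $\norm w_{\dS}\le1$. This is the same Hölder continuity of the Hessian used in Proposition~\ref{Prop5.6}.

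\textbf{Upper bound.} Since $L_U$ is bounded from $\dS$ to its dual, we get
\[
\norm{\cF(u)}\le\norm{L_Uw}+C\norm w^{1+\sigma}\le C\norm w_{\dS}.
\]

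\textbf{Lower bound.} Because $w\perp_{\dS}\cZ$, the inverse estimate \eqref{eq5.9} gives $\norm w_{\dS}\le C\norm{L_Uw}_{(\dS)^*}$. Hence
\[
\norm w_{\dS}\le C\norm{\cF(u)}+C\norm w_{\dS}^{1+\sigma}.
\]
For $\norm w_{\dS}<\varepsilon_G$ small, the last term is absorbed into the left side, which yields
\[
\dist_{\dS}(u,\cM)\le C_G\norm{\cF(u)}_{(\dS)^*}.
\]

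\textbf{Rigidity.} If $\cF(u)=0$ and $\dist_{\dS}(u,\cM)<\varepsilon_G$, the lower bound gives distance zero. Since $\cM$ is closed, $u\in\cM$.

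\textbf{Main obstacle.} The delicate point is making the orthogonality available. One must show that a nearest point of $\cM$ exists for every $u$ within $\varepsilon_G$. One must also show that the transported remainder estimate is uniform. Uniformity holds because after transport we always work at $U$, with the constant from \eqref{eq5.9}. Existence holds because the infimum over the normalized family can be attained only at bounded parameters when the distance is less than $\norm U_{\dS}$.

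The first-variation condition then gives $\cE(w,\partial_\theta U_\theta)=0$ for all parameter directions, which is exactly $w\perp_{\dS}\cZ$ by Theorem~\ref{Thm1.2}. Note that $w$ need not be orthogonal to $U$. This is harmless, since \eqref{eq5.9} requires only $w\perp_{\dS}\cZ$.
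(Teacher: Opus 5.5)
Your proposal is correct and follows essentially the paper's argument. You transport to $U$ by the similarities, split $u=U+w$ with $w\perp_{\dS}\cZ$, and combine the inverse estimate \eqref{eq5.9} with the $(\dS)^*$ bound on the nonlinear remainder and the closedness of $\cM$. The differences are minor. You take the splitting from a genuine nearest point of $\cM$, whose existence follows from \eqref{eq5.12}, so $\norm w_{\dS}$ equals the distance and the upper bound comes from the same expansion. The paper instead obtains a stationary splitting from the implicit function theorem and proves the upper bound by the Lipschitz continuity of $\cF$ on an energy ball.
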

	\begin{proof}
		Set $p=(Q+2)/(Q-2)$. The Sobolev and H\"older inequalities give $\cF\in C^1(\dS,(\dS)^*)$, with
		\[
		\ip{\cF'(u)w}{\varphi}
		=\cE(w,\varphi)-\kappa p\int_G|u|^{p-1}w\varphi\dd g.
		\]
		The derivative is bounded uniformly on a fixed energy ball, so $\cF$ is Lipschitz there. Since all members of $\cM$ have the same energy norm and satisfy $\cF(U_{\lambda,\eta})=0$, taking the infimum over nearby bubbles proves the upper bound in \eqref{eq5.19}.
		
		For the lower bound, first work near $U$. Write
		\[
		R(w)=\kappa\bigl(|U+w|^{p-1}(U+w)-U^p-pU^{p-1}w\bigr).
		\]
		The scalar power inequalities and H\"older's inequality yield
		\begin{equation}\label{eq5.21}
			\norm{R(w)}_{(\dS)^*}
			\leq
			\begin{cases}
				C\norm w_{\dS}^{p},&1<p\leq2,\\
				C\bigl(\norm w_{\dS}^{2}+\norm w_{\dS}^{p}\bigr),&p\geq2.
			\end{cases}
		\end{equation}
		Indeed, the pointwise remainders are bounded by $C|w|^p$ in the first range and by $C(U^{p-2}|w|^2+|w|^p)$ in the second. Each bound belongs to $L^{(p+1)/p}$ because $p+1=2^*$.
		
		Using the normalized parameters $\theta$ from Lemma~\ref{Lem5.5} and the $C^2$ regularity in \eqref{eq5.11}, we apply the implicit function theorem to
		\[
		\cE(u-U_\theta,\partial_{\theta_i}U_\theta)=0.
		\]
		The parameter derivative at $(U,0)$ is minus the Gram matrix of a basis of $\cZ$. We obtain $u=U_\theta+w_\theta$ with $w_\theta$ orthogonal to the tangent space of $\cM$. Transporting by $\cU_{\lambda,\eta}^{-1}$ gives $U+w$ with $w\perp_{\dS}\cZ$ and $\norm w_{\dS}$ small. The change of variables gives the dual equivariance identity
		\[
		\ip{\cF(\cU_{\lambda,\eta}v)}{\cU_{\lambda,\eta}\varphi}
		=\ip{\cF(v)}{\varphi}.
		\]
		Here $(Q-2)(p+1)/2=Q$, so the residual dual norm is invariant. Thus
		\[
		L_Uw=\cF(U+w)+R(w).
		\]
		The inverse estimate \eqref{eq5.9} and \eqref{eq5.21}, with the nonlinear term absorbed for sufficiently small $w$, imply
		\[
		\norm w_{\dS}\leq C_G\norm{\cF(U+w)}_{(\dS)^*}.
		\]
		Since $\dist_{\dS}(u,\cM)\leq\norm w_{\dS}$, this proves the lower bound near $U$. For a function close to $\cM$, choose a bubble within twice the distance threshold and transport it to $U$. Equivariance gives the same constants. When the distance is zero, closedness of $\cM$ from Lemma~\ref{Lem5.5} gives $u\in\cM$. This proves \eqref{eq5.19} throughout the stated neighborhood. Setting $\cF(u)=0$ proves the last assertion.
	\end{proof}
	
	Theorem~\ref{Thm5.8} shows that every solution sufficiently close to $\cM$ belongs to $\cM$ and that the residual norm is locally equivalent to the distance to this family. Theorem~\ref{Thm1.1} gives a global stability estimate for the Sobolev inequality relative to the extremal cone $\mathfrak M$.
	A natural question is whether every positive
	solution $u\in\dS(G)$ of \eqref{eq5.20} belongs to $\cM$
	for an arbitrary H-type group $G$. This is the finite energy
	classification problem of Garofalo and Vassilev
	\cite{GarofaloVassilev2001,Yang2024}, expressed in the normalization
	used here. Theorem~\ref{Thm2.6} establishes the classification
	for Sobolev extremals, while Theorem~\ref{Thm5.8} establishes
	it in an energy neighborhood of $\cM$.
	
	A second question concerns the optimal global stability constant
	\[
	C_{\mathrm{BE}}(G)
	=
	\inf_{\substack{u\in\dS(G)\\d(u)>0}}
	\frac{\Def(u)}{d(u)^2}.
	\]
	Theorem~\ref{Thm1.1} and Proposition~\ref{Prop5.6} imply
	\[
	0<C_{\mathrm{BE}}(G)
	\leq c_{\mathrm{lin}}
	=1-\frac{\mu_\star}{\mu_{\mathrm{next}}}.
	\]
	For a general H-type group, is this infimum attained, and does
	the strict inequality $C_{\mathrm{BE}}(G)<c_{\mathrm{lin}}$ hold?
	These questions extend the study of optimal stability constants
	in the Euclidean and Heisenberg settings
	\cite{Konig2023,Konig2025,TangZhangZhang2024}
	to arbitrary orthogonal Clifford modules.
	
	\section*{Acknowledgments}
	The author thanks Professor Ingo Witt for his supervision and Professor Thomas Schick for his support during his doctoral studies at the University of G\"ottingen. The author acknowledges the use of AI tools. All mathematical arguments and proofs in the final manuscript were written and checked by the author.

	\medskip
	{\bf Funding:} 
	This work is supported by the National Natural Science Foundation of China (12301145, 12561020, 12261107) and the Yunnan Fundamental Research Projects (202401AU070123, 202601AT070048). 
	
	\medskip
	{\bf Data availability:}  Data sharing is not applicable to this article as no new data were created or analyzed in this study.
	
	\medskip
	{\bf Conflict of interest:} The author declares no conflict of interest.
	
	\begingroup
	\small
	\setlength{\parskip}{0pt}
	\interlinepenalty=10000
	
	\endgroup
\end{document}